\documentclass[11pt,reqno]{amsart}

\usepackage[margin=1in]{geometry}
\usepackage{microtype}
\usepackage{setspace}

\usepackage{amsmath}
\usepackage{amssymb}
\usepackage{amsthm}
\usepackage{mathtools}
\usepackage{bm}

\usepackage{enumitem}
\usepackage{xcolor}

\usepackage[
    colorlinks=true,
    linkcolor=blue,
    citecolor=blue,
    urlcolor=blue
]{hyperref}

\usepackage[nameinlink,noabbrev]{cleveref}

\theoremstyle{plain}

\newtheorem{theorem}{Theorem}[section]
\newtheorem{proposition}[theorem]{Proposition}
\newtheorem{lemma}[theorem]{Lemma}
\newtheorem{corollary}[theorem]{Corollary}

\theoremstyle{definition}

\newtheorem{definition}[theorem]{Definition}
\newtheorem{example}[theorem]{Example}

\theoremstyle{remark}

\newtheorem{remark}[theorem]{Remark}

\newcommand{\He}{\operatorname{He}}

\title{Classification of Collisions of Twisted Foulkes Character Polynomials}

\author{Aparna Upadhyay
}

\address{Department of Mathematics \& Statistics\\ University of South Alabama \\
411 N University Blvd\\Mobile, AL~36688, USA}

\email{aupadhyay@southalabama.edu}

\date{August 14, 2026}
\subjclass[2010]{Primary 05E10 , 20C15, Secondary 20C30}
\keywords{Symmetric group, twisted Foulkes module, integer partitions, generalized Hermite polynomials}

\begin{document}

\maketitle


\begin{abstract}
The twisted Foulkes character polynomial is an algebraically defined polynomial attached to an integer partition. We determine precisely how much combinatorial information this polynomial encodes by completely classifying all pairs of partitions that give rise to the same polynomial. Our main result shows that equality of twisted Foulkes character polynomials admits a purely combinatorial characterization in terms of two explicit local operations on partitions.
\end{abstract}


\section{Introduction}
\label{sec:introduction}

Representation theory frequently gives rise to polynomial constructions encoding discrete combinatorial data. A fundamental question is to determine how much of the underlying combinatorial object can be recovered from such a polynomial. In this paper, we study the twisted Foulkes character polynomial attached to an integer partition, arising naturally from the twisted Foulkes characters of the symmetric group, and completely determine the fibers of the resulting map.

Let \(H^{(2^m)}\) denote the Foulkes module of the symmetric group \(S_{2m}\), obtained by inducing the trivial module from the imprimitive wreath product \(S_2\wr S_m\) to \(S_{2m}\). Given nonnegative integers \(m\) and \(k\), define \[
H^{(2^m;k)}
:=
\left(H^{(2^m)}\boxtimes\operatorname{sgn}_{S_k}\right)
\uparrow^{S_{2m+k}}_{S_{2m}\times S_k},
\]
where \(\boxtimes\) denotes the outer tensor product. These modules are called the \emph{twisted Foulkes modules} and generalize the ordinary Foulkes modules. If \(\chi^{(2^m;k)}\) denotes the ordinary character of
\(H^{(2^m;k)}\), then for an element \(g\in S_n\) of cycle type \(\lambda=(1^{s_1},\ldots,n^{s_n})\), where
\(2m+k=n\), we know from \cite{Hall-U-Twisted} that
\[
\sum_{m=0}^{\lfloor n/2\rfloor}
\chi^{(2^m;k)}(g)x^k
=
\prod_{r=1}^{\lfloor n/2\rfloor}
\He_{s_{2r}}^{[-2r]}(1-x^{2r})
\He_{s_{2r-1}}^{[-(2r-1)]}(x^{2r-1}).
\]

Throughout this paper, we denote this polynomial by \(P_\lambda(x)\) and refer to it as the \emph{twisted Foulkes character polynomial}. Besides encoding all corresponding twisted Foulkes character values as its coefficients, it captures interesting representation-theoretic information about the symmetric group. 
For example, if $g\in S_n$ has cycle type $\lambda$, then $P_\lambda(1)$ is the value at $g$ of the total character, namely the sum of all irreducible characters of $S_n$, \cite[Corollary 4.10]{Hall-U-Twisted}. Although its definition is representation-theoretic, the twisted Foulkes character polynomial encodes remarkably rich combinatorial information about the underlying partition. One conceptual aspect of this construction is that it allows us to transform the discrete cycle structure of a conjugacy class into a finite configuration of roots in the complex plane. In this sense, the polynomial provides a geometric realization of the underlying partition.

Our main result shows that this realization is remarkably faithful. We introduce two explicit local operations on partitions (Definition \ref{def:two-operations}) and prove that they generate precisely the equivalence relation determined by equality of twisted Foulkes character polynomials. We obtain
\[
P_\lambda(x)=P_\mu(x) \Longleftrightarrow \lambda\sim\mu,
\]
where \(\sim\) is the equivalence relation defined in Definition \ref{def:equivalence-relation}. Thus every
collision admits a simple combinatorial description, and the polynomial determines the underlying partition up to these two elementary local ambiguities.

A key ingredient in the proof is the role played by generalized Hermite polynomials. Their structural and root-theoretic properties make it possible to recover the underlying partition data and thereby classify all polynomial collisions. This places generalized Hermite polynomials in a new algebraic-combinatorial setting and suggests new directions for the study of representation-theoretic polynomial invariants. Along the way, we develop several techniques that may be of independent interest, including quotient reduction, reciprocal core
factorizations, and logarithmic linearization. These methods provide a systematic framework for recovering partition data from multiplicative polynomial structures and may prove useful in the study of other
algebraically defined polynomials attached to combinatorial objects.

The paper is organized as follows. Section~\ref{sec:partition-polynomials} introduces the partition polynomials and establishes their basic properties. In Section~\ref{sec:equivalence} we define the equivalence relation on partitions and show that it preserves the associated polynomial. Section~\ref{sec:common-factor-rigidity} develops the rigidity theory needed for the classification and reduces the problem to two cases. Section~\ref{sec:equal-even-weight} treats the case in which the even weights agree, while Sections~\ref{sec:unequal-even-weight} and~\ref{sec:exceptional-case} develop the analytic machinery and complete the proof in the exceptional case of unequal even weights. We conclude in Section~\ref{sec:selected-roots} by recording several consequences concerning the roots of the twisted Foulkes character polynomial.

\section{Hermite Polynomials Associated to Partitions}
\label{sec:partition-polynomials}

In this section we introduce the Hermite polynomials associated to
integer partitions that form the central objects of this paper. We begin by describing the basic algebraic framework for the twisted Foulkes character polynomials, including a parity decomposition that underlies the classification. Throughout the remainder of the paper, these polynomials provide the bridge between the combinatorics of partitions and the analytic properties of Hermite polynomials.

\subsection{Ordinary and Generalized Hermite Polynomials}

We begin by recalling the probabilists' Hermite polynomials and introducing a one-parameter family of generalized Hermite polynomials that will play a central role throughout this paper.

\begin{definition}
For each nonnegative integer \(n\), the \emph{probabilists' Hermite
polynomial} is defined by
\[
\He_n(x)
=
\sum_{j=0}^{\lfloor n/2\rfloor}
(-1)^j
\binom{n}{2j}
(2j-1)!!
x^{\,n-2j},
\]
where, by convention,
\[
(-1)!!=1.
\]
\end{definition}

\begin{definition}\cite[pp. 87-89]{roman1984umbral}
Let \(v\in \mathbb{C}\). The \emph{generalized Hermite polynomial} of degree
\(n\) and parameter \(v\) is defined by
\[
\He_n^{[-v]}(x)
=
\sum_{j=0}^{\lfloor n/2\rfloor}
\binom{n}{2j}
(2j-1)!!
v^j
x^{\,n-2j}.
\]
When \(v=1\), these polynomials are obtained from the probabilists' Hermite polynomials by reversing the sign of the variance parameter, which explains the notation \(\He_n^{[-v]}(x)\).
\end{definition}

The following identity relates generalized Hermite polynomials to the ordinary Hermite polynomials.

\begin{theorem}[Imaginary rescaling formula]
\label{thm:imaginary-rescaling}
For every \(v>0\),
\[
\He_n^{[-v]}(x)
=
(i\sqrt{v})^n
\He_n\!\left(\frac{x}{i\sqrt{v}}\right).
\]
\end{theorem}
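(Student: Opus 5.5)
The plan is to compare the two sides of the identity coefficient by coefficient, using the explicit finite sums in the two definitions above. Fix \(v>0\) and set \(c=i\sqrt{v}\). The one algebraic fact needed is \(c^2=-v\), because \(\sqrt{v}\) is real and \(i^2=-1\). Since \(c\neq 0\), the expression \(c^n\He_n(x/c)\) is a genuine polynomial in \(x\), and it suffices to check that its coefficients agree with those of \(\He_n^{[-v]}(x)\).

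First expand the right-hand side using the definition of \(\He_n\):
\[
c^n\He_n\!\left(\frac{x}{c}\right)
=
\sum_{j=0}^{\lfloor n/2\rfloor}
(-1)^j\binom{n}{2j}(2j-1)!!\,c^{n}c^{-(n-2j)}x^{\,n-2j}
=
\sum_{j=0}^{\lfloor n/2\rfloor}
(-1)^j\binom{n}{2j}(2j-1)!!\,c^{2j}x^{\,n-2j}.
\]
Next substitute \(c^{2j}=(c^2)^j=(-v)^j=(-1)^jv^j\). This gives \((-1)^j c^{2j}=(-1)^{2j}v^j=v^j\), so the \(j\)-th term becomes \(\binom{n}{2j}(2j-1)!!\,v^jx^{\,n-2j}\). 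That is exactly the \(j\)-th term in the definition of \(\He_n^{[-v]}(x)\), so the two sides agree term by term, including the \(j=0\) term under the convention \((-1)!!=1\).

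There is no real obstacle here. The only point that needs care is the branch of the square root: \(\sqrt{v}\) denotes the positive real root, which is why \(v>0\) is assumed. Only \(c^2\) and even powers of \(c\) survive in the final sum, so the identity would in fact hold for any choice of \(c\) with \(c^2=-v\). This also shows the formula extends to all complex \(v\neq 0\).
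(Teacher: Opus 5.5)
Your proof is correct and matches the paper's argument: expand $\He_n$ at $x/(i\sqrt{v})$, collapse the powers to $(i\sqrt{v})^{2j}=(-1)^j v^j$, and cancel the two factors of $(-1)^j$. Your closing observation is also valid: only $c^2=-v$ enters, so the identity holds for any $c$ with $c^2=-v$ and hence for every nonzero complex $v$.
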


\begin{proof}
Substituting \(x/(i\sqrt{v})\) into the defining formula for
\(\He_n(x)\) gives
\[
\begin{aligned}
(i\sqrt{v})^n
\He_n\!\left(\frac{x}{i\sqrt{v}}\right)
&=
(i\sqrt{v})^n
\sum_{j=0}^{\lfloor n/2\rfloor}
(-1)^j
\binom{n}{2j}
(2j-1)!!
\left(\frac{x}{i\sqrt{v}}\right)^{n-2j}\\
&=
\sum_{j=0}^{\lfloor n/2\rfloor}
(-1)^j
\binom{n}{2j}
(2j-1)!!
(i\sqrt{v})^{2j}
x^{\,n-2j}.
\end{aligned}
\]
Since
\[
(i\sqrt{v})^{2j}
=
(-1)^jv^j,
\]
the two factors of \((-1)^j\) cancel, giving
\[
\sum_{j=0}^{\lfloor n/2\rfloor}
\binom{n}{2j}
(2j-1)!!
v^j
x^{\,n-2j},
\]
which is precisely \(\He_n^{[-v]}(x)\).
\end{proof}

As an immediate consequence, the roots of the generalized Hermite polynomials inherit the simplicity and reality properties of the ordinary Hermite polynomials.

\begin{corollary}
\label{cor:generalized-hermite-roots}
Let \(v>0\). Every root of $\He_n^{[-v]}(x)$ is simple and purely imaginary.
\end{corollary}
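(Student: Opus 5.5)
The plan is to transfer the known root properties of the probabilists' Hermite polynomials through the imaginary rescaling formula of Theorem~\ref{thm:imaginary-rescaling}. The first step is to recall the classical fact that, for each \(n\ge 1\), \(\He_n\) has exactly \(n\) distinct real roots. This holds because the \(\He_n\) are orthogonal polynomials with respect to the positive weight \(e^{-x^2/2}\) on \(\mathbb{R}\). The standard orthogonality argument then applies: if \(\He_n\) changed sign at fewer than \(n\) real points \(t_1,\dots,t_r\), then \(\He_n(x)\prod_j (x-t_j)\) would have constant sign and nonzero integral. That contradicts orthogonality to all polynomials of degree \(<n\). So \(\He_n\) has \(n\) real roots of odd multiplicity, and since its degree is \(n\), they are all simple. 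Alternatively one can cite this as standard. For \(n=0\) the statement is vacuous, since \(\He_0^{[-v]}=1\) has no roots.

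Next, fix \(v>0\) and write \(c=i\sqrt{v}\), so \(c\neq 0\). By Theorem~\ref{thm:imaginary-rescaling}, \(\He_n^{[-v]}(x)=c^n\He_n(x/c)\). Because \(c^n\neq 0\), \(x_0\) is a root of \(\He_n^{[-v]}\) if and only if \(x_0/c\) is a root of \(\He_n\). Thus the roots of \(\He_n^{[-v]}\) are exactly \(c\,t_1,\dots,c\,t_n\), where \(t_1,\dots,t_n\) are the distinct real roots of \(\He_n\). Each \(c\,t_j=i\sqrt{v}\,t_j\) is purely imaginary, with \(t_j\in\mathbb{R}\); the value \(0\) occurs when \(n\) is odd and may be counted as purely imaginary. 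Since \(x\mapsto cx\) is injective, these \(n\) values are distinct. A polynomial of degree \(n\) (leading coefficient \(1\), from the \(j=0\) term) with \(n\) distinct roots has only simple roots.

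Simplicity could also be checked directly by differentiating. The rescaling gives \(\frac{d}{dx}\He_n^{[-v]}(x)=c^{n-1}\He_n'(x/c)\). A common root of \(\He_n^{[-v]}\) and its derivative would then yield a common root of \(\He_n\) and \(\He_n'\), which is impossible.

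The only substantive ingredient is the simplicity and reality of the roots of \(\He_n\), which is classical. I expect no real obstacle beyond deciding whether to cite it or to include the short orthogonality argument above.
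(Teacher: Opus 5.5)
Your proof is correct and follows the same route as the paper: apply Theorem~\ref{thm:imaginary-rescaling} to identify the roots of \(\He_n^{[-v]}\) as the values \(i\sqrt{v}\,t_j\), where the \(t_j\) are the real simple roots of \(\He_n\), and note that the linear change of variables preserves simplicity. The only difference is that you also supply the orthogonality argument for the classical reality and simplicity of the Hermite roots, and you spell out the simplicity check via root counting or the derivative, both of which the paper simply asserts.
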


\begin{proof}
If \(x\) is a root of \(\He_n^{[-v]}(x)\), then by
Theorem~\ref{thm:imaginary-rescaling},
\[
\He_n\!\left(\frac{x}{i\sqrt{v}}\right)=0.
\]
Since every root of the probabilists' Hermite polynomial is real and simple, it follows that
\[
\frac{x}{i\sqrt{v}}
\]
is real. Hence every root of \(\He_n^{[-v]}(x)\) is purely imaginary. Simplicity is preserved under the linear change of variables
\[
x\mapsto\frac{x}{i\sqrt{v}},
\]
so every root is simple.
\end{proof}

Theorem \ref{thm:imaginary-rescaling} enables us to transfer many properties of the ordinary Hermite polynomials to the generalized Hermite polynomials. In particular, the location of their roots will play a
fundamental role in the rigidity arguments developed later in the paper.

\subsection{Partition Polynomials}

We associate a polynomial to each integer partition by assigning a generalized Hermite polynomial to every distinct part size. The construction naturally separates according to the parity of the parts, a feature that plays a fundamental role throughout the paper.

Let
\[
\lambda
=
\left(1^{s_1},2^{s_2},3^{s_3},\cdots,n^{s_n}\right)
\]
be a partition of a positive integer \(n\), where \(s_d=s_d(\lambda)\)
denotes the multiplicity of the part \(d\).

For every positive integer \(d\) and every nonnegative integer \(m\),
define
\[
F_{d,m}(x)
=
\He_m^{[-d]}(1-x^d),
\]
and
\[
G_{d,m}(x)
=
\He_m^{[-d]}(x^d).
\]

The polynomial \(F_{d,m}(x)\) will be associated with even parts $d$, whereas \(G_{d,m}(x)\) will be associated with odd parts $d$. Note that the degree of these polynomials is $dm$. The leading coefficient of the polynomial $F_{d,m}(x)$ is $(-1)^{m}$, and the polynomial \(G_{d,m}(x)\) is monic.

\begin{definition}
Let
\(
\lambda
=
\left(1^{s_1},2^{s_2},3^{s_3},\cdots,n^{s_n}\right)
\)
be a partition. The \emph{partition polynomial} associated to
\(\lambda\) is defined by
\[
P_\lambda(x)
=
\prod_{\substack{d\ge1\\ d\text{ even}}}
F_{d,s_d(\lambda)}(x)
\;
\prod_{\substack{d\ge1\\ d\text{ odd}}}
G_{d,s_d(\lambda)}(x).
\]
\end{definition}

The first few building blocks are
\[
\begin{aligned}
F_{d,0}(x)
&=1,\\
F_{d,1}(x)
&=1-x^d,\\
F_{d,2}(x)
&=(1-x^d)^2+d,
\end{aligned}
\]
and
\[
\begin{aligned}
G_{d,0}(x)
&=1,\\
G_{d,1}(x)
&=x^d,\\
G_{d,2}(x)
&=x^{2d}+d.
\end{aligned}
\]

\begin{example}
Let
\[
\lambda=(6,4,4,3,1,1).
\]
Then
\[
\begin{aligned}
P_\lambda(x)
&=
F_{6,1}(x)
F_{4,2}(x)
G_{3,1}(x)
G_{1,2}(x)\\
&=
\He^{[-6]}_1(1-x^6)\,
\He^{[-4]}_2(1-x^4)\,
\He^{[-3]}_1(x^3)\,
\He^{[-1]}_2(x).
\end{aligned}
\]
\end{example}

The product defining \(P_\lambda(x)\) is finite since only finitely many multiplicities \(s_d(\lambda)\) are nonzero. Moreover, the decomposition into even and odd factors is intrinsic to the construction and will allow the two types of factors to be analyzed independently in the subsequent sections.

\subsection{Even and Odd Decomposition}

The definition of the partition polynomial naturally separates into contributions coming from the even parts and the odd parts of the partition. Since these two families of factors exhibit markedly different algebraic behaviour, we shall study them independently throughout the paper.

\begin{definition}
Let
\(
\lambda
=
\left(1^{s_1},2^{s_2},3^{s_3},\cdots,n^{s_n}\right)
\)
be a partition. The \emph{even polynomial} associated to \(\lambda\) is
\[
E_\lambda(x)
=
\prod_{\substack{d\ge1\\ d\textnormal{ even}}}
F_{d,s_d(\lambda)}(x),
\]
and the \emph{odd polynomial} associated to \(\lambda\) is
\[
O_\lambda(x)
=
\prod_{\substack{d\ge1\\ d\textnormal{ odd}}}
G_{d,s_d(\lambda)}(x).
\]
\end{definition}

We also define the \emph{even weight} and \emph{odd weight} of a
partition by
\[
e_\lambda
=
\sum_{\substack{d\ge1\\ d\textnormal{ even}}}
d\,s_d(\lambda),
\]
and
\[
o_\lambda
=
\sum_{\substack{d\ge1\\ d\textnormal{ odd}}}
d\,s_d(\lambda).
\]
Thus
\(
|\lambda|
=
e_\lambda+o_\lambda
\), \(\deg E_\lambda(x)=e_\lambda\), \(\deg O_\lambda(x)=o_\lambda\) and \(\deg P_\lambda(x)=|\lambda|\).
The decomposition
\[
P_\lambda(x)
=
E_\lambda(x)\,O_\lambda(x)
\]
reflects the separation of the partition into its even and odd parts. This decomposition will play a central role throughout the paper. In the subsequent sections we shall investigate the algebraic properties
of the even and odd factors separately.


\section{Equivalence of Partitions}
\label{sec:equivalence}

In this section we introduce two elementary operations on integer partitions. These operations generate an equivalence relation on the set of all partitions and preserve the associated partition polynomials. The main result of this paper will show that these are, in fact, the only operations that preserve the partition polynomial.

\begin{definition} \label{def:two-operations}
The following two elementary operations may be performed on a partition.
\begin{itemize}
    \item \textit{Operation 1:} Suppose that a partition $\lambda$ contains distinct odd parts
\(
a_1,\ldots,a_r,
\)
each occurring with multiplicity exactly one, and does not contain the
distinct odd parts
\(
b_1,\ldots,b_s
\)
such that
\[
\sum_{i=1}^r a_i
=
\sum_{j=1}^s b_j.
\]
The partition $\mu$ obtained by removing the parts
\[
a_1,\ldots,a_r
\]
and inserting the parts
\[
b_1,\ldots,b_s
\]
is said to be obtained by \emph{Operation~1}.

\item  \textit{Operation 2:} Suppose that a partition $\lambda$ contains exactly one part of size \(4\), no parts of size \(2\), and no parts of size \(1\). Replacing the unique
part \(4\) by the parts
\[
(2,1,1)
\]
to obtain a partition $\mu$ is called \emph{Operation~2}.
\end{itemize}
\end{definition}

\begin{definition}\label{def:equivalence-relation}
Two partitions $\lambda$ and $\mu$ are said to be equivalent if they belong to the smallest equivalence relation generated by Operations~1 and~2. In this case we write
\[
\lambda\sim\mu.
\]
\end{definition}

The following proposition shows that each elementary operation preserves the associated partition polynomial.

\begin{proposition}
\label{prop:operations-preserve}
Operations~1 and~2 each preserve the partition polynomial.
\end{proposition}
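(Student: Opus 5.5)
The proof is a direct computation: Operation~1 changes only factors $G_{d,m}$ with $m\in\{0,1\}$, and Operation~2 changes only factors attached to the parts $1$, $2$, $4$. So in each case I would cancel the common factors of $P_\lambda$ and $P_\mu$ and compare the small products that remain.

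For Operation~1, the parts $a_1,\ldots,a_r$ occur in $\lambda$ with multiplicity one, and the parts $b_1,\ldots,b_s$ do not occur in $\lambda$. After the operation, each $a_i$ occurs in $\mu$ with multiplicity zero. For each $b_j$, I should first check that it is distinct from every $a_i$; otherwise the operation would not be well defined as stated. Given that, each $b_j$ occurs in $\mu$ with multiplicity one. All other multiplicities agree, including all even ones, so $E_\lambda=E_\mu$. Using $G_{d,0}=1$ and $G_{d,1}(x)=x^d$, the odd polynomials differ only by the factors
\[
\prod_i x^{a_i}=x^{\sum a_i}
\quad\text{and}\quad
\prod_j x^{b_j}=x^{\sum b_j}.
\]
These are equal by the hypothesis $\sum_i a_i=\sum_j b_j$. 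Hence $P_\lambda=P_\mu$.

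For Operation~2, $\lambda$ has $s_4=1$ and $s_2=s_1=0$, while $\mu$ has $s_4=0$, $s_2=1$ and $s_1=2$. All other multiplicities agree. It therefore suffices to show
\[
F_{4,1}(x)\,F_{2,0}(x)\,G_{1,0}(x)=F_{4,0}(x)\,F_{2,1}(x)\,G_{1,2}(x),
\]
that is, $1-x^4=(1-x^2)(x^2+1)$. This is the factorization $1-x^4=(1-x^2)(1+x^2)$, which I would verify by expanding. Here I use the listed formulas $G_{1,2}(x)=x^2+1$ and $F_{2,1}(x)=1-x^2$, which come from $\He_2^{[-v]}(y)=y^2+v$ and $\He_1^{[-v]}(y)=y$.

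There is no real obstacle. The only care needed is bookkeeping of multiplicities: the multiplicity-exactly-one hypothesis ensures that removing each $a_i$ turns the factor $x^{a_i}$ into $1$. The absence hypotheses ensure that inserting each $b_j$ turns a factor $1$ into $x^{b_j}$, and that the parts $2$ and $1$ acquire exactly the multiplicities $1$ and $2$.
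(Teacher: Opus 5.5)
Your proof is correct and matches the paper's argument: cancel the unchanged factors, use $G_{d,0}=1$ and $G_{d,1}(x)=x^d$ together with $\sum_i a_i=\sum_j b_j$ for Operation~1, and use $1-x^4=(1-x^2)(1+x^2)$ for Operation~2. The distinctness of the $b_j$ from the $a_i$ that you flag needs no separate check: every $a_i$ occurs in $\lambda$ and no $b_j$ does, so they cannot coincide.
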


\begin{proof}
Suppose first that Operation~1 replaces the distinct odd parts
\[
a_1,\ldots,a_r
\]
by the distinct odd parts
\[
b_1,\ldots,b_s,
\]
where
\[
\sum_{i=1}^r a_i
=
\sum_{j=1}^s b_j.
\]
Since every removed part has multiplicity one,
\[
G_{d,1}(x)=x^d.
\]
Therefore the contribution of the removed parts to the partition
polynomial is
\[
x^{a_1}\cdots x^{a_r}
=
x^{a_1+\cdots+a_r},
\]
while the contribution of the inserted parts is
\[
x^{b_1}\cdots x^{b_s}
=
x^{b_1+\cdots+b_s}.
\]
These are equal because the sums of the removed and inserted parts are equal. All remaining Hermite factors are unchanged, so the partition polynomial is preserved.

Now suppose that Operation~2 replaces the unique part \(4\) by the parts \((2,1,1)\). The corresponding Hermite factor
\[
\He^{[-4]}_1(1-x^4)
=
1-x^4
\]
is replaced by
\[
\He^{[-2]}_1(1-x^2)
\,
\He^{[-1]}_2(x)
=
(1-x^2)(x^2+1)
=
1-x^4.
\]
Again all remaining factors are unchanged, so the partition polynomial
is preserved.
\end{proof}

\begin{corollary}
\label{cor:equivalence-preserves}
If
\(
\lambda\sim\mu,
\)
then
\(
P_\lambda(x)=P_\mu(x).
\)
\end{corollary}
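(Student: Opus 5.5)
The corollary follows from Proposition~\ref{prop:operations-preserve} by a formal argument about generated equivalence relations. I will consider the relation \(\approx\) on partitions defined by \(\lambda\approx\mu\) if and only if \(P_\lambda(x)=P_\mu(x)\). The plan is to show that \(\approx\) is an equivalence relation containing every pair related by Operation~1 or Operation~2. The minimality built into Definition~\ref{def:equivalence-relation} then gives \(\sim\,\subseteq\,\approx\), which is exactly the claim.

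First, \(\approx\) is an equivalence relation. It is the kernel of the map \(\lambda\mapsto P_\lambda(x)\), and equality of polynomials is reflexive, symmetric and transitive. Second, whenever \(\mu\) is obtained from \(\lambda\) by a single application of Operation~1 or Operation~2, Proposition~\ref{prop:operations-preserve} gives \(P_\lambda(x)=P_\mu(x)\), so \(\lambda\approx\mu\). The reverse direction, where \(\lambda\) is obtained from \(\mu\), is covered by symmetry of \(\approx\).

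One can also phrase this concretely. The smallest equivalence relation generated by the operations consists of the pairs \((\lambda,\mu)\) joined by a finite chain \(\lambda=\nu_0,\nu_1,\ldots,\nu_t=\mu\), where each consecutive pair is related by one of the operations applied in either direction (the case \(t=0\) gives reflexivity). Proposition~\ref{prop:operations-preserve} gives \(P_{\nu_{i-1}}=P_{\nu_i}\) for each \(i\), and chaining these equalities gives \(P_\lambda=P_\mu\).

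There is no real obstacle here. The one point needing care is that the operations are not symmetric as stated, since Operation~2 only replaces \(4\) by \((2,1,1)\). The inverse moves must therefore be handled, and this is done by the symmetry of equality.
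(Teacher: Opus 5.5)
Your proof is correct and is essentially the paper's argument: the paper simply observes that $\sim$ is generated by Operations~1 and~2, each of which preserves $P_\lambda(x)$ by Proposition~\ref{prop:operations-preserve}. You spell out the minimality step the paper leaves implicit, namely that the kernel of $\lambda\mapsto P_\lambda(x)$ is an equivalence relation containing all generating pairs, with inverse moves handled by the symmetry of equality.
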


\begin{proof}
The equivalence relation is generated by Operations~1 and~2, each of which preserves the partition polynomial by Proposition~\ref{prop:operations-preserve}.
\end{proof}

%

\section{Rigidity of Common Factors}
\label{sec:common-factor-rigidity}

The main objective of this section is to determine the possible common factors of the building blocks introduced in Section~\ref{sec:partition-polynomials}. We begin by describing the geometry of their roots. This viewpoint naturally leads to a tangency criterion, from which we deduce a strong rigidity theorem for common irreducible factors. We will conclude by showing that every discrepancy between the even and odd factors of two equal partition polynomials is necessarily a power of the polynomial \(x^2+1\).

\subsection{Geometry of the Roots}
\label{subsec:geometry-of-roots}
The imaginary rescaling formula of Theorem~\ref{thm:imaginary-rescaling} gives a simple geometric description of the roots of the fundamental factors
\[
F_{d,m}(x)=\He_m^{[-d]}(1-x^d)
\]
and
\[
G_{d,m}(x)=\He_m^{[-d]}(x^d).
\]

\begin{proposition}
\label{prop:odd-root-geometry}
Let \(d\) be a positive integer and let \(m\ge1\).
If \(\alpha\) is a root of
\(
G_{d,m}(x),
\)
then
\[
\alpha^d=i\sqrt{d}\,\rho,
\]
where \(\rho\) is a real root of the probabilists' Hermite polynomial
\(\He_m(x)\).
\end{proposition}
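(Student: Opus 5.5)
The plan is to substitute \(y=\alpha^d\) and apply the imaginary rescaling formula of Theorem~\ref{thm:imaginary-rescaling} with \(v=d>0\). By definition, \(G_{d,m}(\alpha)=\He_m^{[-d]}(\alpha^d)\). So \(\alpha\) is a root of \(G_{d,m}\) exactly when \(y=\alpha^d\) is a root of \(\He_m^{[-d]}\).

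First, since \(d\) is a positive integer, Theorem~\ref{thm:imaginary-rescaling} gives
\[
0=\He_m^{[-d]}(y)=(i\sqrt{d})^m\He_m\!\left(\frac{y}{i\sqrt{d}}\right).
\]
The prefactor \((i\sqrt{d})^m\) is nonzero, so \(\rho:=y/(i\sqrt{d})\) is a root of \(\He_m\).

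Next, I will invoke the classical fact already used in the proof of Corollary~\ref{cor:generalized-hermite-roots}: every root of the probabilists' Hermite polynomial \(\He_m\) is real, and since \(m\ge1\) the polynomial has roots at all. Hence \(\rho\) is a real root of \(\He_m\). Rearranging the definition of \(\rho\) gives \(\alpha^d=i\sqrt{d}\,\rho\), which is the claim.

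There is no substantial obstacle here. The only point needing care is that we apply the rescaling formula to the value \(\alpha^d\) rather than to \(\alpha\) itself. This is legitimate because Theorem~\ref{thm:imaginary-rescaling} is a polynomial identity valid at every complex argument, and \(v=d\) is positive as that theorem requires.
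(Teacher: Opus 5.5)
Your proposal is correct and follows the paper's proof: evaluate at \(y=\alpha^d\), apply Theorem~\ref{thm:imaginary-rescaling} with \(v=d\), and use the fact that every root of \(\He_m\) is real. Your remark that the prefactor \((i\sqrt d)^m\) is nonzero only makes explicit a step the paper leaves implicit.
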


\begin{proof}
Suppose that
\[
G_{d,m}(\alpha)=0.
\]
Then
\[
\He_m^{[-d]}(\alpha^d)=0.
\]
By Theorem~\ref{thm:imaginary-rescaling},
\[
\He_m\!\left(\frac{\alpha^d}{i\sqrt d}\right)=0.
\]
Since every root of \(\He_m(x)\) is real, there exists a real root
\(\rho\) of \(\He_m(x)\) such that
\[
\frac{\alpha^d}{i\sqrt d}=\rho.
\]
Hence
\[
\alpha^d=i\sqrt d\,\rho. \qedhere
\]
\end{proof}

\begin{proposition}
\label{prop:even-root-geometry}
Let \(d\) be a positive integer and let \(m\ge1\).
If \(\alpha\) is a root of
\(
F_{d,m}(x),
\)
then
\[
1-\alpha^d=i\sqrt{d}\,\rho,
\]
where \(\rho\) is a real root of the probabilists' Hermite polynomial
\(\He_m(x)\).
\end{proposition}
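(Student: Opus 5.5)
The argument follows the proof of Proposition~\ref{prop:odd-root-geometry}, with the inner argument \(x^d\) replaced by \(1-x^d\).

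Suppose \(F_{d,m}(\alpha)=0\). By definition of \(F_{d,m}\), this means
\[
\He_m^{[-d]}(1-\alpha^d)=0.
\]
Since \(d>0\), Theorem~\ref{thm:imaginary-rescaling} applies with \(v=d\) and gives
\[
\He_m^{[-d]}(y)=(i\sqrt d)^m\He_m\!\left(\frac{y}{i\sqrt d}\right)
\]
for every \(y\). The prefactor \((i\sqrt d)^m\) is nonzero, so taking \(y=1-\alpha^d\) yields
\[
\He_m\!\left(\frac{1-\alpha^d}{i\sqrt d}\right)=0.
\]

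Every root of the probabilists' Hermite polynomial \(\He_m\) is real, and since \(m\ge1\), \(\He_m\) has roots. Hence
\[
\rho:=\frac{1-\alpha^d}{i\sqrt d}
\]
is a real root of \(\He_m(x)\). Rearranging gives \(1-\alpha^d=i\sqrt d\,\rho\), which is the claim.

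There is no real obstacle. The only external input is the classical fact that \(\He_m\) has only real roots, which the paper already uses in Corollary~\ref{cor:generalized-hermite-roots} and Proposition~\ref{prop:odd-root-geometry}. The point to state carefully is that the rescaling identity is used with \(v=d>0\), and with the argument \(1-\alpha^d\) in place of \(\alpha^d\).
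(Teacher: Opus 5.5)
Your proof is correct and matches the paper's argument: you apply the imaginary rescaling formula with \(v=d\), use \((i\sqrt d)^m\neq 0\) to transfer the zero to \(\He_m\), and use the reality of the Hermite roots to define \(\rho\). Your explicit remark that the prefactor is nonzero is a small step the paper leaves implicit.
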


\begin{proof}
Suppose that
\[
F_{d,m}(\alpha)=0.
\]
Then
\[
\He_m^{[-d]}(1-\alpha^d)=0.
\]
Applying Theorem~\ref{thm:imaginary-rescaling},
\[
\He_m\!\left(
\frac{1-\alpha^d}{i\sqrt d}
\right)=0.
\]
Therefore
\[
\frac{1-\alpha^d}{i\sqrt d}
\]
is a real root \(\rho\) of \(\He_m(x)\), giving
\[
1-\alpha^d=i\sqrt d\,\rho. \qedhere
\] 
\end{proof}

Propositions \ref{prop:odd-root-geometry} and \ref{prop:even-root-geometry} show that the roots of odd Hermite factors
are determined by the equation
\[
x^d=i\sqrt d\,\rho,
\]
whereas the roots of even Hermite factors satisfy
\[
1-x^d=i\sqrt d\,\rho.
\]

\subsection{A Scaled Tangent Lemma}

The following lemma shows that, after scaling by the square root of an even integer, these tangent values can never be algebraic integers unless the tangent itself vanishes.

\begin{lemma}[Scaled Tangent Lemma]
\label{lem:scaled-tangent}
Let \(d\) be a positive even integer, let \(q\ge3\) be an odd integer,
and let \(j\) be an integer satisfying
\(
j\not\equiv0\pmod q.
\)
Then
\[
\frac1{\sqrt d}
\tan\!\left(\frac{j\pi}{q}\right)
\]
is not an algebraic integer.
\end{lemma}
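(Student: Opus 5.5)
The plan is to reduce the claim to a norm computation in a cyclotomic field. Suppose, for contradiction, that
\[
\beta=\frac{1}{\sqrt d}\tan\!\left(\frac{j\pi}{q}\right)
\]
is an algebraic integer, and put \(t=\tan(j\pi/q)\). Then \(t^2=d\beta^2\), so
\[
\frac{t^2}{2}=\frac d2\,\beta^2
\]
is an algebraic integer, because \(d/2\) is a positive integer. It therefore suffices to show that \(t^2/2\) is never an algebraic integer.

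First I reduce to lowest terms. Write \(j\pi/q=j'\pi/q'\) with \(\gcd(j',q')=1\). Since \(q\) is odd and \(j\not\equiv0\pmod q\), the denominator \(q'\) is odd and \(q'\ge3\). Set \(\zeta=e^{2\pi i j'/q'}\), a primitive \(q'\)-th root of unity, so that \(e^{2i\theta}=\zeta\) with \(\theta=j'\pi/q'\). Note that \(\zeta\neq-1\) because \(q'\) is odd, so the tangent is defined. The standard formula gives
\[
t=-i\,\frac{\zeta-1}{\zeta+1},
\qquad\text{hence}\qquad
\frac{t^2}{2}=-\frac{(\zeta-1)^2}{2(\zeta+1)^2}.
\]

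Next I show that \(\zeta+1\) is a unit. Indeed, \(-\zeta\) is a primitive \(2q'\)-th root of unity. Since \(2q'\) is divisible by \(2\) and by an odd prime, it is not a prime power, so \(1-(-\zeta)\) is a unit. Consequently \(t^2/2\) is an algebraic integer if and only if \(\gamma=(1-\zeta)^2/2\) is one.

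Finally I take norms from \(\mathbb{Q}(\zeta)\) to \(\mathbb{Q}\), writing \(\varphi=\varphi(q')\) for the degree. We have \(N(1-\zeta)=\Phi_{q'}(1)\), which equals \(p\) if \(q'\) is a power of the prime \(p\) and equals \(1\) otherwise. In either case this norm is odd. Therefore
\[
N(\gamma)=\frac{\Phi_{q'}(1)^2}{2^{\varphi}},
\]
which is not a rational integer, since \(\varphi\ge2\) and the numerator is odd. Hence \(\gamma\) is not an algebraic integer, which is the desired contradiction.

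The step needing the most care is the unit claim for \(\zeta+1\), together with the value of \(\Phi_{q'}(1)\). Both follow from the classical fact that \(1-\omega\) is a unit for every primitive \(n\)-th root of unity \(\omega\) when \(n\) is not a prime power, while \(N(1-\omega)=p\) when \(n\) is a power of \(p\). A fallback for the final step is a prime-ideal argument. Because \(q'\) is odd, \(2\) is unramified in \(\mathbb{Q}(\zeta)\) and \(x^{q'}-1\) is separable modulo \(2\). So \(\zeta\not\equiv1\) modulo any prime above \(2\), and hence \(2\nmid(1-\zeta)^2\).
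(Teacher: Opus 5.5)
Your proof is correct and follows essentially the same route as the paper. You write $\tan(j\pi/q)=-i(\zeta-1)/(\zeta+1)$, show $1+\zeta$ is a unit, and use that $N(1-\zeta)=\Phi_{q'}(1)$ is odd, so dividing $(1-\zeta)^2$ by an even integer gives a non-integral norm. The only cosmetic differences are that you divide by $2$ instead of $d$, and that you justify the unit claim by noting $-\zeta$ is a primitive $2q'$-th root of unity, which is the same fact as the paper's $\Phi_N(-1)=\Phi_{2N}(1)=1$.
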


\begin{proof}
Let
\[
\zeta=e^{2\pi ij/q},
\]
and let \(N>1\) denote the order of \(\zeta\). Since \(N\mid q\) and
\(q\) is odd, the integer \(N\) is also odd.

Using the identity
\[
\tan\theta
=
-i\,
\frac{e^{2i\theta}-1}{e^{2i\theta}+1},
\]
we obtain
\[
\tan\!\left(\frac{j\pi}{q}\right)
=
-i\,
\frac{\zeta-1}{\zeta+1}.
\]

Set
\[
\tau
=
\tan\!\left(\frac{j\pi}{q}\right).
\]
Then
\[
\tau^2
=
-
\frac{(\zeta-1)^2}{(\zeta+1)^2}.
\]

We first show that \(\tau^2\) is an algebraic integer.

Since \(\zeta\) is a root of unity, \(\zeta-1\) is an algebraic
integer. Moreover,
\[
\left|
N_{\mathbf Q(\zeta)/\mathbf Q}(1+\zeta)
\right|
=
|\Phi_N(-1)|,
\]
where \(\Phi_N(x)\) denotes the \(N\)-th cyclotomic polynomial. Since
\(N>1\) is odd,
\[
\Phi_N(-1)=1.
\]
Hence \(1+\zeta\) is an algebraic unit, and therefore
\[
(1+\zeta)^{-1}
\]
is an algebraic integer. Consequently,
\[
\tau^2
=
-(\zeta-1)^2(1+\zeta)^{-2}
\]
is an algebraic integer.

Next we compute its norm. By multiplicativity of the norm,
\[
N_{\mathbf Q(\zeta)/\mathbf Q}(\tau^2)
=
N_{\mathbf Q(\zeta)/\mathbf Q}(-1)\,
\frac{
N_{\mathbf Q(\zeta)/\mathbf Q}(1-\zeta)^2
}{
N_{\mathbf Q(\zeta)/\mathbf Q}(1+\zeta)^2
}.
\]

Since
\[
\left|
N_{\mathbf Q(\zeta)/\mathbf Q}(1-\zeta)
\right|
=
\Phi_N(1),
\]
and
\[
\Phi_N(1)
=
\begin{cases}
p,&\text{if }N=p^a\text{ is a prime power},\\
1,&\text{otherwise},
\end{cases}
\]
it follows that
\[
N_{\mathbf Q(\zeta)/\mathbf Q}(1-\zeta)
\]
is an odd integer. Furthermore,
\[
N_{\mathbf Q(\zeta)/\mathbf Q}(1+\zeta)=\pm1.
\]
Hence
\[
N_{\mathbf Q(\zeta)/\mathbf Q}(\tau^2)
\]
is an odd integer.

Suppose, for contradiction, that
\[
\frac{\tau}{\sqrt d}
\]
is an algebraic integer. Then
\[
\frac{\tau^2}{d}
\]
is also an algebraic integer. Therefore its norm is an integer:
\[
\frac{
N_{\mathbf Q(\zeta)/\mathbf Q}(\tau^2)
}{
d^{[\mathbf Q(\zeta):\mathbf Q]}
}
\in\mathbf Z.
\]
Since \(d\) is even whereas
\[
N_{\mathbf Q(\zeta)/\mathbf Q}(\tau^2)
\]
is odd, this is impossible.

Therefore
\[
\frac1{\sqrt d}
\tan\!\left(\frac{j\pi}{q}\right)
\]
cannot be an algebraic integer.
\end{proof}

The importance of Lemma~\ref{lem:scaled-tangent} is that the quantity \(\rho\) arising from Proposition~\ref{prop:even-root-geometry} is a root of a probabilists' Hermite polynomial and hence is an algebraic integer. Consequently, whenever such a root is expressed as a scaled tangent value, the lemma forces the tangent to vanish. This observation is the key ingredient in the proof of the cross-factor rigidity
theorem (Theorem \ref{thm:cross-factor-rigidity}).

\subsection{Cross-Factor Rigidity}

We now determine the possible common irreducible factors of an even Hermite factor and an odd Hermite factor. The geometric descriptions of their roots established in Section~\ref{subsec:geometry-of-roots}, together with Lemma \ref{lem:scaled-tangent}, help deduce that the interaction between the two families is extremely limited.

\begin{theorem}[Cross-Factor Rigidity]
\label{thm:cross-factor-rigidity}
Let \(d\) be a positive even integer, let \(q\) be a positive odd integer, and let \(m,k\ge1\). Define
\[
F_{d,m}(x)=\He^{[-d]}_m(1-x^d)
\]
and
\[
G_{q,k}(x)=\He^{[-q]}_k(x^q).
\]

Then the following hold.

\begin{enumerate}
\item
If \(q\ge3\), then
\[
\gcd_{\mathbb R[x]}(F_{d,m},G_{q,k})=1.
\]

\item
If \(q=1\), then
\[
\gcd_{\mathbb R[x]}(F_{d,m},G_{1,k})
\in
\{1,x^2+1\}.
\]
\end{enumerate}
\end{theorem}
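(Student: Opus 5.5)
The plan is to show that a common complex root $\alpha$ of $F_{d,m}$ and $G_{q,k}$ has very restricted form. We use Proposition~\ref{prop:odd-root-geometry} for $G_{q,k}$ and Proposition~\ref{prop:even-root-geometry} for $F_{d,m}$. Then we either reach a contradiction via Lemma~\ref{lem:scaled-tangent}, or we land on $\alpha=\pm i$. Since both polynomials have real coefficients, their gcd over $\mathbb R[x]$ (taken monic) is determined by the common complex roots and their multiplicities. So it suffices to locate the common roots and check multiplicity.

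\textbf{Setting up the two root equations.} Suppose $F_{d,m}(\alpha)=G_{q,k}(\alpha)=0$. The two propositions give
\[
\alpha^q=i\sqrt q\,\rho,\qquad 1-\alpha^d=i\sqrt d\,\sigma,
\]
with $\rho$ a real root of $\He_k$ and $\sigma$ a real root of $\He_m$. Since $\He_k,\He_m$ are monic with integer coefficients, $\rho$ and $\sigma$ are algebraic integers. The second equation forces $\operatorname{Re}(\alpha^d)=1$, so $\alpha\neq0$. In particular $\rho\ne0$, and $\alpha^q$ is a nonzero purely imaginary number. Write $\alpha=re^{i\theta}$ with $\theta=(2l+1)\pi/(2q)$ for some integer $l$. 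Then $r^d\cos(d\theta)=1$ and $\sqrt d\,\sigma=-r^d\sin(d\theta)$. Hence
\[
\sigma=-\frac{1}{\sqrt d}\tan(d\theta)=-\frac1{\sqrt d}\tan\!\Big(\frac{j\pi}{q}\Big),\qquad j=\tfrac d2(2l+1),
\]
which is legitimate because $d$ is even.

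\textbf{Case $q\ge3$.} If $j\not\equiv0\pmod q$, Lemma~\ref{lem:scaled-tangent} says $\sigma$ is not an algebraic integer, which is a contradiction. If $j\equiv0\pmod q$, then $\tan(d\theta)=0$, so $\sigma=0$ and $\alpha^d=1$. Thus $|\alpha|=1$, and therefore $|\alpha^q|=1$, giving $q\rho^2=1$. But then $\rho^2=1/q$ is a rational non-integer, so it is not an algebraic integer, while $\rho^2$ must be one. Hence $F_{d,m}$ and $G_{q,k}$ have no common complex root, and their gcd is $1$.

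\textbf{Case $q=1$.} Here $\alpha=i\rho$ is purely imaginary. Since $d$ is even, $\alpha^d=(-1)^{d/2}\rho^d$ is real. So $1-\alpha^d$ is real and equals $i\sqrt d\,\sigma$, which forces $\sigma=0$ and $\alpha^d=1$. Combined with $\alpha=i\rho$, $\rho$ real, this gives $\rho=\pm1$, i.e.\ $\alpha\in\{\pm i\}$. By Corollary~\ref{cor:generalized-hermite-roots}, all roots of $G_{1,k}=\He_k^{[-1]}(x)$ are simple. Hence the gcd divides $(x-i)(x+i)=x^2+1$. Because the gcd has real coefficients, its root set is closed under conjugation, so it is either $1$ or $x^2+1$.

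The only delicate step is the reduction to the scaled tangent, i.e.\ getting the argument of $\alpha$ from the odd equation and the modulus from $\operatorname{Re}\alpha^d=1$. The leftover subcase $j\equiv 0$ is handled by the elementary integrality argument on $\rho^2=1/q$.
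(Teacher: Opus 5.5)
Your proof is correct and follows essentially the paper's argument: you use the two root-geometry propositions to write a Hermite root as $\pm\frac{1}{\sqrt d}\tan(j\pi/q)$, exclude $j\not\equiv 0 \pmod q$ with the Scaled Tangent Lemma, and rule out $j\equiv 0$ because $|z|=1$ would force the square of a Hermite root to equal the non-integer $1/q$. The only differences are cosmetic: you read off $j$ from the argument of the common root itself rather than from the realness of $(1-i\sqrt d\,\rho)^q$, and you bound the multiplicity using the simple roots of $G_{1,k}$ rather than the simplicity of the roots $\pm i$ of $F_{d,m}$.
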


\begin{proof}
Suppose that
\(
z\in\mathbb C
\)
is a common root of
\(
F_{d,m}(x)
\)
and
\(
G_{q,k}(x).
\)
By Propositions~\ref{prop:even-root-geometry} and \ref{prop:odd-root-geometry}, there exist real roots
\[
\rho\in\mathbb R,\qquad
\sigma\in\mathbb R
\]
of the ordinary Hermite polynomials
\(\He_m(x)\) and \(\He_k(x)\), respectively, such that
\[
1-z^d=i\sqrt d\,\rho,
\tag{4.1} \label{eq:4.1}
\]
and
\[
z^q=i\sqrt q\,\sigma.
\tag{4.2} \label{eq:4.2}
\]

Equation (\ref{eq:4.1}) may be rewritten as
\[
z^d=1-i\sqrt d\,\rho.
\]
Raising this equation to the \(q\)-th power and equation \eqref{eq:4.2} to the \(d\)-th power gives
\[
(1-i\sqrt d\,\rho)^q
=
(i\sqrt q\,\sigma)^d.
\tag{$\bigstar$} \label{eq:bigstar}
\]

Since \(d\) is even, the right-hand side is real. Hence
\[
(1-i\sqrt d\,\rho)^q
\in\mathbb R.
\]

Write
\[
1-i\sqrt d\,\rho
=
Re^{-i\theta},
\]
where
\[
R=\sqrt{1+d\rho^2},
\qquad
-\frac{\pi}{2}<\theta<\frac{\pi}{2}.
\]

Since the \(q\)-th power is real,
\[
q\theta\in\pi\mathbb Z.
\]

Therefore, for some integer \(j\)
\[
\theta=\frac{j\pi}{q}
\]
Since
\[
\tan\theta=\sqrt d\,\rho,
\]
we obtain
\[
\rho
=
\frac1{\sqrt d}
\tan\!\left(\frac{j\pi}{q}\right).
\tag{4.3} \label{eq:4.3}
\]

\medskip

\noindent
\textbf{Case 1.}
\(q\ge3\).

Suppose first that
\[
j\not\equiv0\pmod q.
\]

By Lemma~\ref{lem:scaled-tangent},
the quantity on the right-hand side of \eqref{eq:4.3}
cannot be an algebraic integer.

On the other hand,
\(\rho\) is a root of the monic polynomial
\(\He_m(x)\in\mathbb Z[x]\),
and is therefore an algebraic integer. This contradiction shows that
\[
j\equiv0\pmod q.
\]

Since
\[
-\frac{\pi}{2}<\theta<\frac{\pi}{2},
\]
it follows that
\[
j=0.
\]

Hence
\[
\rho=0.
\]

Equation \eqref{eq:4.1} now gives
\[
z^d=1,
\]
so
\[
|z|=1.
\]

Taking absolute values in \eqref{eq:4.2},
\[
1
=
|z|^q
=
\sqrt q\,|\sigma|,
\]
and therefore
\[
\sigma=\pm\frac1{\sqrt q}.
\]

Since \(q>1\),
\[
\sigma^2=\frac1q
\]
is a nonintegral rational number.

If \(\sigma\) were an algebraic integer, then so would be \(\sigma^2\). But the only rational algebraic integers are the ordinary integers, a contradiction. Hence no common root exists, proving
\[
\gcd_{\mathbb R[x]}(F_{d,m},G_{q,k})=1.
\]

\medskip

\noindent
\textbf{Case 2.}
\(q=1\).
Equation \eqref{eq:4.2} becomes
\[
z=i\sigma.
\]

The identity \eqref{eq:bigstar} becomes
\[
1-i\sqrt d\,\rho
=
(i\sigma)^d.
\]

Since \(d\) is even,
the right-hand side is real.

Comparing imaginary parts gives
\[
\rho=0.
\]

Hence
\[
z^d=1.
\]

Together with
\[
z=i\sigma,
\]
this implies
\[
z=\pm i.
\]

Therefore every common complex root belongs to
\[
\{\,i,-i\,\}.
\]

Since both polynomials have real coefficients, every nonconstant common divisor over
\(\mathbb R\) must have all of its roots contained in this set.

The only irreducible polynomial in
\(\mathbb R[x]\)
having these roots is
\[
x^2+1.
\]

Finally, the roots of the ordinary Hermite polynomials are simple. Hence the corresponding roots of the generalized Hermite polynomials are also simple. Since
\[
\frac{d}{dx}(1-x^d)
=
-dx^{d-1}
\]
is nonzero at
\(\pm i\),
the substitutions preserve multiplicity.

Therefore
\[
x^2+1
\]
can occur with multiplicity at most one, and hence
\[
\gcd_{\mathbb R[x]}(F_{d,m},G_{1,k})
\in
\{1,x^2+1\}.\qedhere
\]
\end{proof}

\begin{remark}
Theorem~\ref{thm:cross-factor-rigidity} is the key structural result of this section. It shows that the only irreducible polynomial that can occur simultaneously in an even Hermite factor and an odd Hermite factor
is the exceptional polynomial \(x^2+1\). This observation is the starting point for the classification of polynomial collisions in the next two sections.
\end{remark}

\subsection{Normalized Reciprocals of Odd Hermite Factors}

The proof of the quotient reduction theorem (Theorem \ref{thm:quotient-reduction}) will use normalized reciprocal polynomials to compare the lowest-degree terms of odd Hermite factors.

\begin{definition}
Let \(F(x)\in\mathbb R[x]\) be a nonzero polynomial of degree \(N\) with leading coefficient \(c_F\). The \emph{normalized reciprocal} of
\(F\) is
\[
F^{*}(t)
=
\frac{t^N F(t^{-1})}{c_F}.
\]
By construction, \(F^{*}(t)\) is a polynomial with constant term equal to \(1\).
\end{definition}

\begin{lemma}
\label{lem:normalized-reciprocal-multiplicative}
Let \(F(x),G(x)\in\mathbb R[x]\) be nonzero polynomials. Then
\[
(FG)^{*}(t)
=
F^{*}(t)\,G^{*}(t).
\]
\end{lemma}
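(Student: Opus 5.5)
The plan is to compute the degree and leading coefficient of \(FG\), and then substitute directly into the definition of the normalized reciprocal. Let \(N=\deg F\) and \(M=\deg G\), with leading coefficients \(c_F\) and \(c_G\). Because \(\mathbb R\) is an integral domain, \(FG\) is nonzero. Its degree is \(N+M\), and its leading coefficient is \(c_{FG}=c_Fc_G\). This holds because the top-degree term of the product is \(c_Fc_G x^{N+M}\), and \(c_Fc_G\ne0\).

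With this in hand, the definition gives
\[
(FG)^{*}(t)=\frac{t^{N+M}(FG)(t^{-1})}{c_Fc_G}
=\frac{t^{N}F(t^{-1})}{c_F}\cdot\frac{t^{M}G(t^{-1})}{c_G}
=F^{*}(t)\,G^{*}(t).
\]
The middle step uses \((FG)(t^{-1})=F(t^{-1})G(t^{-1})\) together with \(t^{N+M}=t^Nt^M\). The identity first holds as an equality of rational functions in \(t\) on \(t\neq0\). Both sides are polynomials, so it is then an equality of polynomials.

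The only point needing care is that the normalization uses the actual degree and leading coefficient. Over a field there is no cancellation in the top term of a product, so both are multiplicative and there is no real obstacle.
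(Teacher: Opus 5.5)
Your proof is correct and follows essentially the same route as the paper. Both compute the degree and leading coefficient of \(FG\) as \(N+M\) and \(c_Fc_G\), then split \(t^{N+M}(FG)(t^{-1})/(c_Fc_G)\) into the two normalized reciprocals.
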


\begin{proof}
Let
\(
\deg(F)=M, \;
\deg(G)=N,
\)
and let \(c_F,c_G\) denote their leading coefficients.
Then
\[
\deg(FG)=M+N
\]
and the leading coefficient of \(FG\) is \(c_Fc_G\). Hence
\[
(FG)^{*}(t)
=
\frac{t^{M+N}F(t^{-1})G(t^{-1})}{c_Fc_G}
=
\left(
\frac{t^M F(t^{-1})}{c_F}
\right)
\left(
\frac{t^N G(t^{-1})}{c_G}
\right),
\]
which is precisely
\[
F^{*}(t)G^{*}(t). \qedhere
\]
\end{proof}

For every positive odd integer \(d\) and every integer \(m\ge0\), recall that
\[
G_{d,m}(x)
=
\He_m^{[-d]}(x^d).
\]
Since \(G_{d,m}(x)\) is monic of degree \(md\), we define its
normalized reciprocal by
\[
S_{d,m}(t)
=
t^{md}G_{d,m}(t^{-1})
=
t^{md}\He_m^{[-d]}(t^{-d}).
\]

The following expansion will be a key ingredient.

\begin{lemma}
\label{lem:odd-reciprocal-expansion}
For every positive odd integer \(d\) and every integer \(m\ge0\),
\[
S_{d,m}(t)
=
1
+
\binom{m}{2}d\,t^{2d}
+
O(t^{4d}).
\]
In particular,
\[
S_{d,0}(t)=S_{d,1}(t)=1.
\]
\end{lemma}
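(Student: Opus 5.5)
The plan is to substitute the explicit definition of the generalized Hermite polynomial into the normalized reciprocal and read off the expansion term by term. By definition,
\[
G_{d,m}(x)=\He_m^{[-d]}(x^d)=\sum_{j=0}^{\lfloor m/2\rfloor}\binom{m}{2j}(2j-1)!!\,d^j\,x^{d(m-2j)}.
\]
This is monic of degree $md$, because the $j=0$ term is $x^{dm}$. So $S_{d,m}(t)=t^{md}G_{d,m}(t^{-1})$ is exactly the reciprocal with no further normalization.

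Multiplying each term $x^{d(m-2j)}$, evaluated at $x=t^{-1}$, by $t^{md}$ gives $t^{2dj}$. Hence
\[
S_{d,m}(t)=\sum_{j=0}^{\lfloor m/2\rfloor}\binom{m}{2j}(2j-1)!!\,d^j\,t^{2dj}.
\]
The $j=0$ term is $1$, since $(-1)!!=1$. The $j=1$ term is $\binom{m}{2}\cdot 1\cdot d\,t^{2d}$. Every term with $j\ge2$ is a multiple of $t^{4d}$, which gives the $O(t^{4d})$ remainder. Here $O(t^{4d})$ should be read as a polynomial divisible by $t^{4d}$. When $m<2$ the binomial $\binom{m}{2}$ vanishes, which is consistent with the general formula.

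For the special cases: when $m=0$ or $m=1$ we have $\lfloor m/2\rfloor=0$, so the sum has only the $j=0$ term and $S_{d,0}=S_{d,1}=1$. This can also be checked directly from $G_{d,0}=1$ and $G_{d,1}=x^d$. Oddness of $d$ is not used; it is only the setting in which the lemma is stated.

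There is no real obstacle here. The only point needing care is the exponent bookkeeping $md-d(m-2j)=2dj$, which shows that only powers of $t^{2d}$ appear.
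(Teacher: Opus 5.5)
Your proposal is correct and follows essentially the same route as the paper: substitute the explicit formula for $\He_m^{[-d]}$, use $t^{md}\cdot t^{-d(m-2j)}=t^{2dj}$, and read off the $j=0$, $j=1$, and $j\ge 2$ contributions. Your side remark that the oddness of $d$ is never used is also accurate.
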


\begin{proof}
Using the explicit formula for the generalized Hermite polynomials,
\[
\He_m^{[-d]}(y)
=
\sum_{j=0}^{\lfloor m/2\rfloor}
\binom{m}{2j}(2j-1)!!\,d^j
y^{m-2j},
\]
we obtain
\[
S_{d,m}(t)
=
t^{md}\He_m^{[-d]}(t^{-d})
=
\sum_{j=0}^{\lfloor m/2\rfloor}
\binom{m}{2j}(2j-1)!!\,d^j
t^{2jd}.
\]

The constant term corresponds to \(j=0\) and equals \(1\).
The first nonconstant term occurs for \(j=1\), namely
\[
\binom{m}{2}d\,t^{2d}.
\]
Every remaining summand has degree at least \(4d\). Therefore
\[
S_{d,m}(t)
=
1
+
\binom{m}{2}d\,t^{2d}
+
O(t^{4d}).
\]

Finally,
\[
\binom02=\binom12=0,
\]
so
\[
S_{d,0}(t)=S_{d,1}(t)=1.
\]
\end{proof}

\subsection{Quotient Reduction}

Theorem \ref{thm:cross-factor-rigidity} allows the equality of two partition polynomials to be reduced to two fundamentally different situations.

\begin{theorem}[Quotient Reduction]
\label{thm:quotient-reduction}
Suppose
\(
P_\lambda(x)=P_\mu(x).
\)
Then exactly one of the following holds.

\begin{enumerate}[label=\textnormal{(\roman*)}]
\item
\(
E_\lambda(x)=E_\mu(x)
\)
and
\(
O_\lambda(x)=O_\mu(x).
\)

\item
After possibly interchanging \(\lambda\) and \(\mu\), there exists an integer \(r\ge1\) such that
\[
E_\lambda(x)
=
(x^2+1)^rE_\mu(x)
\]
and
\[
O_\mu(x)
=
(x^2+1)^rO_\lambda(x).
\]
\end{enumerate}
\end{theorem}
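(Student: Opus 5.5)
The plan is to compare irreducible factorizations in \(\mathbb R[x]\) and use Theorem~\ref{thm:cross-factor-rigidity} to confine all discrepancy to the factor \(x^2+1\).

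Set \(D=\gcd(E_\lambda,E_\mu)\) and write \(E_\lambda=D\,A\), \(E_\mu=D\,B\) with \(\gcd(A,B)=1\). Cancelling \(D\) from \(E_\lambda O_\lambda=E_\mu O_\mu\) gives
\[
A\,O_\lambda=B\,O_\mu .
\]
Since \(A\) and \(B\) are coprime, \(A\mid O_\mu\) and \(B\mid O_\lambda\).

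\textbf{Step 1: \(A\) and \(B\) are powers of \(x^2+1\).} Every irreducible factor \(\pi\) of \(A\) divides some even block \(F_{d,m}\) of \(\lambda\). It also divides \(O_\mu\), so it divides some odd block \(G_{q,k}\) of \(\mu\). By Theorem~\ref{thm:cross-factor-rigidity}, \(q=1\) and \(\pi=x^2+1\), up to a scalar. Thus \(A=c_A(x^2+1)^r\), and likewise \(B=c_B(x^2+1)^{s}\). Coprimality forces \(r=0\) or \(s=0\). After interchanging \(\lambda\) and \(\mu\) if necessary, we may take \(s=0\), so \(B\) is a constant.

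\textbf{Step 2: normalize the constants.} Up to sign, \(E_\lambda\) and \(E_\mu\) are polynomials whose leading coefficients are \(\pm1\); more precisely, the leading coefficient of \(F_{d,m}\) is \((-1)^m\). The odd polynomials \(O_\lambda,O_\mu\) are monic. Hence \(O_\lambda=c\,(x^2+1)^rO_\mu\) with \(c=c_A/c_B\), and comparing leading coefficients of these monic polynomials gives \(c=1\). Substituting back then gives
\[
E_\lambda=(x^2+1)^rE_\mu .
\]
This is almost the statement, except that the roles of \(\lambda\) and \(\mu\) are swapped in the odd part. In fact, the computation yields \(O_\mu=(x^2+1)^rO_\lambda\) directly. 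From \(A\,O_\lambda=B\,O_\mu\) with \(A=c_A(x^2+1)^r\) and \(B=c_B\), monicity of both odd polynomials forces \(c_A=c_B\), hence \(O_\mu=(x^2+1)^rO_\lambda\). Dividing \(E_\lambda=D\,c_A(x^2+1)^r\) by \(E_\mu=D\,c_B\) gives \(E_\lambda=(x^2+1)^rE_\mu\). Consistency is automatic: \(P_\lambda=E_\lambda O_\lambda=(x^2+1)^rE_\mu O_\lambda=E_\mu O_\mu\).

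\textbf{Step 3: the dichotomy.} If \(r=0\), then \(E_\lambda=E_\mu\) and \(O_\lambda=O_\mu\), which is case (i). If \(r\ge1\), we are in case (ii). The two cases are mutually exclusive, because in case (ii) the degrees satisfy \(\deg E_\lambda=\deg E_\mu+2r\neq \deg E_\mu\).

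\textbf{Main obstacle.} The only delicate point is Step 1, and it is handled entirely by Theorem~\ref{thm:cross-factor-rigidity}. The remaining care is to make sure that \(x^2+1\) is the only irreducible factor allowed to appear, which is what the \(q=1\) case of that theorem gives. The multiplicity-one part of that theorem is not needed here, because \(r\) may be accumulated across several blocks. The normalized reciprocals \(S_{d,m}\) are not needed for this statement; they presumably serve later to bound \(r\).
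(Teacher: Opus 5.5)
Your proof is correct and follows the paper's argument: you split $E_\lambda=DA$, $E_\mu=DB$ with $\gcd(A,B)=1$, use Euclid's lemma to get $A\mid O_\mu$ and $B\mid O_\lambda$, apply Theorem~\ref{thm:cross-factor-rigidity} to force $A$ and $B$ to be constant multiples of powers of $x^2+1$, and compare leading coefficients using monicity of the odd products. Delete the first, reversed formula $O_\lambda=c\,(x^2+1)^rO_\mu$ in Step 2, since you correct it immediately afterwards; your degree argument for mutual exclusivity of (i) and (ii) is a small addition that the paper leaves implicit.
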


\begin{proof}
We have
\[
P_\lambda(x)=E_\lambda(x)O_\lambda(x),
\qquad
P_\mu(x)=E_\mu(x)O_\mu(x).
\]

Let
\[
D(x)=\gcd(E_\lambda,E_\mu),
\]
chosen to be monic, and write
\[
E_\lambda=D\,A,
\qquad
E_\mu=D\,B,
\]
where
\[
\gcd(A,B)=1.
\]

Cancelling the common factor \(D\) from
\[
E_\lambda O_\lambda
=
E_\mu O_\mu
\]
gives
\[
A\,O_\lambda
=
B\,O_\mu.
\]

Since
\[
\gcd(A,B)=1,
\]
Euclid's Lemma implies that
\[
A\mid O_\mu,
\qquad
B\mid O_\lambda.
\]

Let \(f(x)\) be an irreducible factor of \(A(x)\).
Then \(f\) divides an even Hermite factor occurring in
\(E_\lambda\), and also divides an odd Hermite factor occurring in
\(O_\mu\).

By Theorem~\ref{thm:cross-factor-rigidity},
the only irreducible polynomial having this property is
\[
x^2+1.
\]
Hence every irreducible factor of \(A\) is equal to
\(x^2+1\), so
\[
A(x)=a(x^2+1)^r
\]
for some nonzero constant \(a\) and some integer \(r\ge0\).

Similarly,
\[
B(x)=b(x^2+1)^s
\]
for some nonzero constant \(b\) and integer \(s\ge0\).

Since
\[
\gcd(A,B)=1,
\]
at most one of \(r\) and \(s\) is positive.

If
\[
r=s=0,
\]
then \(A\) and \(B\) are nonzero constants.
Comparing leading coefficients in
\[
A\,O_\lambda
=
B\,O_\mu
\]
shows that \(A=B\), and therefore
\[
E_\lambda=E_\mu.
\]
Substituting this into
\[
P_\lambda=P_\mu
\]
gives
\[
O_\lambda=O_\mu,
\]
yielding case \textnormal{(i)}.

Otherwise, exactly one of \(r,s\) is positive.
After interchanging \(\lambda\) and \(\mu\), if necessary,
we may assume
\[
r\ge1,
\qquad
s=0.
\]

Again comparing leading coefficients in
\[
A\,O_\lambda
=
B\,O_\mu
\]
shows that
\[
a=b.
\]
Hence
\[
O_\mu(x)
=
(x^2+1)^rO_\lambda(x),
\]
and
\[
E_\lambda(x)
=
(x^2+1)^rE_\mu(x),
\]
which is case \textnormal{(ii)}.
\end{proof}

Theorem~\ref{thm:quotient-reduction} shows that every polynomial collision falls into one of two fundamentally different cases. If the even Hermite products coincide, then the even and odd parts separate completely. Otherwise, after possibly interchanging the two partitions, the even and odd products differ by a power of the
exceptional polynomial \(x^2+1\). These two cases are analyzed in Sections \ref{sec:equal-even-weight} and \ref{sec:exceptional-case}.

%

\section{The Case \(e_\lambda=e_\mu\)}
\label{sec:equal-even-weight}

By Theorem~\ref{thm:quotient-reduction}, every equality
\[
P_\lambda(x)=P_\mu(x)
\]
falls into one of two fundamentally different cases. In this section we consider the first case, namely that the total sizes of the even subpartitions coincide:
\[
e_\lambda=e_\mu.
\]
Our first objective is to show that this numerical equality forces the even and odd partition polynomials to agree separately. We then use these equalities to recover the even and odd subpartitions, thereby
proving that \(\lambda\) and \(\mu\) are equivalent.

\subsection{Reduction to Equal Even and Odd Products}

The first result shows that the hypothesis
\(
e_\lambda=e_\mu
\)
reduces the classification problem to the separate study of the even and odd partition polynomials.

\begin{theorem}
\label{thm:equal-even-weight-reduction}
Suppose
\(
P_\lambda(x)=P_\mu(x)
\)
and
\(
e_\lambda=e_\mu.
\)
Then
\(
E_\lambda(x)=E_\mu(x)
\)
and
\(
O_\lambda(x)=O_\mu(x).
\)
\end{theorem}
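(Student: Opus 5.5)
This follows from Theorem~\ref{thm:quotient-reduction} together with a degree count. Since \(P_\lambda(x)=P_\mu(x)\), the theorem says that exactly one of its alternatives (i) or (ii) holds. If (i) holds, we already have the two equalities and there is nothing to prove. So the whole argument is to show that alternative (ii) contradicts the hypothesis \(e_\lambda=e_\mu\).

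Suppose (ii) holds. After possibly interchanging \(\lambda\) and \(\mu\), there is an integer \(r\ge1\) with
\[
E_\lambda(x)=(x^2+1)^rE_\mu(x).
\]
Take degrees on both sides. From Section~\ref{sec:partition-polynomials} we know \(\deg E_\lambda=e_\lambda\) and \(\deg E_\mu=e_\mu\). This degree identity holds because each factor \(F_{d,m}\) has degree \(dm\) and leading coefficient \((-1)^m\), which is nonzero, so degrees add over the product. Comparing degrees therefore gives
\[
e_\lambda=2r+e_\mu>e_\mu.
\]
This contradicts \(e_\lambda=e_\mu\). The interchange of \(\lambda\) and \(\mu\) is harmless, since the hypothesis \(e_\lambda=e_\mu\) is symmetric in the two partitions.

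Hence alternative (ii) is impossible, so (i) holds, which is exactly \(E_\lambda=E_\mu\) and \(O_\lambda=O_\mu\). The only points to verify are the degree formula for \(E_\lambda\) and that Theorem~\ref{thm:quotient-reduction} requires \(r\ge1\) in case (ii); both are immediate. There is no substantive obstacle, because all the real work was done in the proof of the quotient reduction.
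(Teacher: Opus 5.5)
Your proposal is correct and follows essentially the same route as the paper: invoke Theorem~\ref{thm:quotient-reduction}, rule out alternative (ii) by the degree count $e_\lambda=e_\mu+2r$, and conclude. The only cosmetic difference is that the paper obtains $O_\lambda=O_\mu$ by cancelling $E_\lambda=E_\mu$ from $P_\lambda=P_\mu$, whereas you read both equalities directly off alternative (i).
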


\begin{proof}
By Theorem~\ref{thm:quotient-reduction}, either
\[
E_\lambda(x)=E_\mu(x)
\]
or, after possibly interchanging \(\lambda\) and \(\mu\),
\[
E_\lambda(x)
=
(x^2+1)^rE_\mu(x)
\]
for some integer \(r\ge1\).

Suppose the second alternative holds. Since
\[
\deg(E_\lambda)=e_\lambda,
\qquad
\deg(E_\mu)=e_\mu,
\]
we obtain
\[
e_\lambda
=
e_\mu+2r,
\]
contradicting the hypothesis that
\[
e_\lambda=e_\mu.
\]
Hence
\[
E_\lambda(x)=E_\mu(x).
\]

Finally,
\[
P_\lambda(x)
=
E_\lambda(x)O_\lambda(x)
=
E_\mu(x)O_\mu(x),
\]
and cancellation yields
\[
O_\lambda(x)=O_\mu(x). \qedhere
\]
\end{proof}

\subsection{Recovery of the Even Subpartition}
\label{subsec:recovery-even-subpartition}

We now show that the even polynomial determines every even-part multiplicity. By Theorem~\ref{thm:equal-even-weight-reduction}, in the setting of this section we have
\[
E_\lambda(x)=E_\mu(x).
\]
The next result shows, more generally, that this polynomial identity alone is sufficient to recover the entire even subpartition.

For every positive even integer \(d\) and every integer \(m\ge0\),
recall that
\[
F_{d,m}(x)
=
\He_m^{[-d]}(1-x^d).
\]
When \(m=0\), we have \(F_{d,0}(x)=1\). For \(m\ge1\), the polynomial
\(F_{d,m}(x)\) has degree \(md\) and leading coefficient
\((-1)^m\). Its normalized reciprocal is therefore
\[
R_{d,m}(t)
=
(-1)^m t^{md}F_{d,m}(t^{-1})
=
(-1)^m t^{md}
\He_m^{[-d]}(1-t^{-d}).
\]
We also set
\[
R_{d,0}(t)=1.
\]

The following expansion allows the multiplicities to be recovered in increasing order of the part size.

\begin{lemma}
\label{lem:even-reciprocal-expansion}
For every positive even integer \(d\) and every integer \(m\ge0\),
\[
R_{d,m}(t)
=
1-mt^d+O(t^{2d}).
\]
In particular, the coefficient of \(t^d\) in \(R_{d,m}(t)\) is
\(-m\).
\end{lemma}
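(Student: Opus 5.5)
The plan is to compute $R_{d,m}(t)$ directly from the explicit formula for the generalized Hermite polynomials, in the same way as in Lemma~\ref{lem:odd-reciprocal-expansion}. The case $m=0$ holds by the convention $R_{d,0}(t)=1$, so we assume $m\ge1$. Writing
\[
\He_m^{[-d]}(y)=\sum_{j=0}^{\lfloor m/2\rfloor}\binom{m}{2j}(2j-1)!!\,d^j\,y^{m-2j},
\]
we substitute $y=1-t^{-d}$ and multiply by $(-1)^m t^{md}$. For each $j$ we split the power of $t$ as $t^{md}=t^{2jd}\,t^{(m-2j)d}$. This gives
\[
(-1)^m t^{md}\,(1-t^{-d})^{m-2j}
=(-1)^m t^{2jd}\,(t^d-1)^{m-2j}
=t^{2jd}\,(1-t^d)^{m-2j},
\]
where the last step uses the fact that $(-1)^{m}=(-1)^{m-2j}$. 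Therefore
\[
R_{d,m}(t)=\sum_{j=0}^{\lfloor m/2\rfloor}\binom{m}{2j}(2j-1)!!\,d^j\,t^{2jd}(1-t^d)^{m-2j}.
\]

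Next we read off the terms of low degree. Every summand with $j\ge1$ is divisible by $t^{2d}$, so it contributes only to $O(t^{2d})$. The summand with $j=0$ is $(1-t^d)^m$. By the binomial theorem it equals $1-mt^d+O(t^{2d})$. This gives the claimed expansion, and in particular the coefficient of $t^d$ in $R_{d,m}(t)$ is $-m$.

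No step is a real obstacle. The only point that needs care is the sign bookkeeping: the normalizing factor $(-1)^m$ must combine with $(t^d-1)^{m-2j}$ to produce $(1-t^d)^{m-2j}$. This works because $m$ and $m-2j$ have the same parity. It is also worth noting that the parity of $d$ plays no role in this computation.
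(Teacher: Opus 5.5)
Your proposal is correct and takes essentially the same route as the paper. Both arguments expand via the explicit formula, rewrite each summand as $t^{2jd}$ times a power of $(t^d-1)$, and read off the $j=0$ term. The only cosmetic difference is that you absorb the sign into every summand using $(-1)^m=(-1)^{m-2j}$, whereas the paper keeps $(-1)^m$ outside the sum and combines it only with the $j=0$ term.
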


\begin{proof}
The assertion is immediate when \(m=0\), so suppose \(m\ge1\). Using the explicit formula for the generalized Hermite polynomials,
\[
\He_m^{[-d]}(y)
=
\sum_{j=0}^{\lfloor m/2\rfloor}
\binom{m}{2j}(2j-1)!!\,d^j y^{m-2j},
\]
we obtain
\[
\begin{aligned}
R_{d,m}(t)
&=
(-1)^m t^{md}
\He_m^{[-d]}(1-t^{-d})\\
&=
(-1)^m
\sum_{j=0}^{\lfloor m/2\rfloor}
\binom{m}{2j}(2j-1)!!\,d^j
t^{md}(1-t^{-d})^{m-2j}.
\end{aligned}
\]
Since
\[
1-t^{-d}=t^{-d}(t^d-1),
\]
we have
\[
t^{md}(1-t^{-d})^{m-2j}
=
t^{2jd}(t^d-1)^{m-2j}.
\]
Therefore
\[
R_{d,m}(t)
=
(-1)^m
\sum_{j=0}^{\lfloor m/2\rfloor}
\binom{m}{2j}(2j-1)!!\,d^j
t^{2jd}(t^d-1)^{m-2j}.
\]

The term corresponding to \(j=0\) is
\[
(-1)^m(t^d-1)^m
=
(1-t^d)^m
=
1-mt^d+O(t^{2d}).
\]
Every summand with \(j\ge1\) contains the factor \(t^{2jd}\), and therefore begins in degree at least \(2d\). Hence none of these terms contributes to either the constant term or the coefficient of \(t^d\).

It follows that
\[
R_{d,m}(t)
=
1-mt^d+O(t^{2d}),
\]
as required.
\end{proof}

We can now recover every even-part multiplicity.

\begin{theorem}[Recovery of the Even Subpartition]
\label{thm:recovery-even-subpartition}
Let \(\lambda\) and \(\mu\) be partitions. If
\[
E_\lambda(x)=E_\mu(x),
\]
then
\[
s_d(\lambda)=s_d(\mu)
\]
for every positive even integer \(d\). Consequently, the even subpartitions of \(\lambda\) and \(\mu\) are identical.
\end{theorem}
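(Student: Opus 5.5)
We pass to normalized reciprocals and read off the multiplicities one even part size at a time, in increasing order. By Lemma~\ref{lem:normalized-reciprocal-multiplicative}, the identity \(E_\lambda=E_\mu\) gives
\[
\prod_{d\text{ even}} R_{d,s_d(\lambda)}(t)=\prod_{d\text{ even}} R_{d,s_d(\mu)}(t).
\]
Both products are finite, and both sides are polynomials in \(t\) with constant term \(1\). Note that \(E_\lambda^*=E_\mu^*\) holds with no ambiguity in the normalization: equal polynomials have equal degree and equal leading coefficient, so their normalized reciprocals agree.

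We argue by strong induction on the even part size \(d\). Suppose that \(s_{d'}(\lambda)=s_{d'}(\mu)\) for every even \(d'<d\). The factors \(R_{d',s_{d'}}\) with \(d'<d\) are then identical on the two sides. Each has constant term \(1\), so each is invertible in the power series ring \(\mathbb{R}[[t]]\), and we may cancel them. This leaves
\[
\prod_{d'\ge d,\ d'\text{ even}} R_{d',s_{d'}(\lambda)}(t)=\prod_{d'\ge d,\ d'\text{ even}} R_{d',s_{d'}(\mu)}(t)
\]
in \(\mathbb{R}[[t]]\).

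By Lemma~\ref{lem:even-reciprocal-expansion}, every factor with \(d'>d\) has the form \(1+O(t^{d'})\), and \(d'\ge d+2\), so it is \(1+O(t^{d+1})\). The factor with index \(d\) is \(1-s_d\,t^d+O(t^{2d})\). Hence modulo \(t^{d+1}\) the left side is \(1-s_d(\lambda)t^d\) and the right side is \(1-s_d(\mu)t^d\). Comparing the coefficients of \(t^d\) gives \(s_d(\lambda)=s_d(\mu)\), which completes the induction step.

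The base case is the same argument with nothing cancelled, applied at \(d=2\). Since all even multiplicities agree, the even subpartitions coincide.

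The only delicate point is checking that the factors with larger index cannot contaminate the coefficient of \(t^d\). This follows from the valuation bound in Lemma~\ref{lem:even-reciprocal-expansion}. It also requires that the cancellation be carried out in \(\mathbb{R}[[t]]\) rather than in \(\mathbb{R}[t]\), or equivalently that we compare truncations modulo \(t^{d+1}\) directly.
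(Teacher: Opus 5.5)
Your proof is correct and follows essentially the same route as the paper: take normalized reciprocals via Lemma~\ref{lem:normalized-reciprocal-multiplicative}, then induct over even part sizes in increasing order, cancelling the already-matched factors and comparing the coefficient of $t^d$ using Lemma~\ref{lem:even-reciprocal-expansion}. One small correction to your closing remark: the cancellation is equally valid in $\mathbb{R}[t]$, because it is an integral domain and the cancelled factors are nonzero, so passing to $\mathbb{R}[[t]]$ is not actually required.
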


\begin{proof}
For every positive even integer \(d\), set
\[
m_d=s_d(\lambda),
\qquad
n_d=s_d(\mu).
\]
Then
\[
E_\lambda(x)
=
\prod_{\substack{d\ge2\\ d\text{ even}}}
F_{d,m_d}(x),
\qquad
E_\mu(x)
=
\prod_{\substack{d\ge2\\ d\text{ even}}}
F_{d,n_d}(x),
\]
where both products are finite.

Taking normalized reciprocals and using
Lemma~\ref{lem:normalized-reciprocal-multiplicative}, we obtain
\[
\prod_{\substack{d\ge2\\ d\text{ even}}}
R_{d,m_d}(t)
=
\prod_{\substack{d\ge2\\ d\text{ even}}}
R_{d,n_d}(t).
\tag{5.1} \label{eq:5.1}
\]

We prove by induction over the positive even integers that
\[
m_d=n_d.
\]

For \(d=2\), Lemma~\ref{lem:even-reciprocal-expansion} gives
\[
R_{2,m_2}(t)
=
1-m_2t^2+O(t^4).
\]
For every even integer \(e>2\),
\[
R_{e,m_e}(t)
=
1+O(t^e),
\]
and since \(e\ge4\), no such factor contributes to the coefficient of
\(t^2\) in the left-hand side of \eqref{eq:5.1}. Thus the coefficient of
\(t^2\) on the left is exactly
\[
-m_2.
\]
Similarly, the coefficient of \(t^2\) on the right is
\[
-n_2.
\]
Hence
\[
m_2=n_2.
\]

Now let \(d\ge4\) be even and suppose inductively that
\[
m_e=n_e
\]
for every positive even integer \(e<d\). The corresponding reciprocal
factors are therefore identical and may be cancelled from (5.1).
We obtain
\[
\prod_{\substack{e\ge d\\ e\text{ even}}}
R_{e,m_e}(t)
=
\prod_{\substack{e\ge d\\ e\text{ even}}}
R_{e,n_e}(t).
\tag{5.2}\label{eq:5.2}
\]

By Lemma~\ref{lem:even-reciprocal-expansion},
\[
R_{d,m_d}(t)
=
1-m_dt^d+O(t^{2d}).
\]
If \(e>d\), then
\[
R_{e,m_e}(t)
=
1+O(t^e),
\]
and hence no factor with index \(e>d\) contributes to the coefficient of \(t^d\) in the left-hand side of \eqref{eq:5.2}. Consequently, that coefficient is exactly
\[
-m_d.
\]
The same argument shows that the coefficient of \(t^d\) on the right-hand side is
\[
-n_d.
\]
Equality of the two products therefore gives
\[
m_d=n_d.
\]

By induction,
\[
s_d(\lambda)=s_d(\mu)
\]
for every positive even integer \(d\). Hence the even subpartitions of
\(\lambda\) and \(\mu\) are identical.
\end{proof}

\subsection{Recovery of Higher Odd Multiplicities}
\label{subsec:recovery-higher-odd}

By Theorem~\ref{thm:equal-even-weight-reduction}, in the present setting we also have
\[
O_\lambda(x)=O_\mu(x).
\]
We now show that this equality determines every odd part whose multiplicity is at least two.

Recall from Section~4.4 that, for every positive odd integer \(d\) and every integer \(m\ge0\),
\[
G_{d,m}(x)
=
\He_m^{[-d]}(x^d)
\]
has normalized reciprocal
\[
S_{d,m}(t)
=
t^{md}G_{d,m}(t^{-1}),
\]
and that
\[
S_{d,m}(t)
=
1+\binom{m}{2}d\,t^{2d}+O(t^{4d}).
\]
In particular,
\[
S_{d,0}(t)=S_{d,1}(t)=1.
\]

\begin{theorem}
\label{thm:recovery-higher-odd}
Let \(\lambda\) and \(\mu\) be partitions satisfying
\[
O_\lambda(x)=O_\mu(x).
\]
Then, for every positive odd integer \(d\),
\[
\max\{s_d(\lambda),\,s_d(\mu)\}\ge2
\quad\Longrightarrow\quad
s_d(\lambda)=s_d(\mu).
\]
\end{theorem}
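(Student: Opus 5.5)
The plan is to imitate the proof of Theorem~\ref{thm:recovery-even-subpartition}, using the normalized reciprocals \(S_{d,m}(t)\) of the odd Hermite factors and the expansion of Lemma~\ref{lem:odd-reciprocal-expansion}. For each positive odd \(d\), set \(m_d=s_d(\lambda)\) and \(n_d=s_d(\mu)\). Each \(G_{d,m}\) is monic, so \(O_\lambda\) and \(O_\mu\) are monic. Taking normalized reciprocals of \(O_\lambda=O_\mu\) and applying Lemma~\ref{lem:normalized-reciprocal-multiplicative} gives the finite identity
\[
\prod_{d\text{ odd}} S_{d,m_d}(t)=\prod_{d\text{ odd}} S_{d,n_d}(t).
\]

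I would prove by strong induction over the odd integers \(d=1,3,5,\ldots\) the following statement: \(S_{d,m_d}=S_{d,n_d}\), and moreover \(m_d=n_d\) whenever \(\max\{m_d,n_d\}\ge2\). Assume this holds for all odd \(e<d\). Then the factors with index \(e<d\) agree on both sides and can be cancelled. This leaves
\[
\prod_{e\ge d}S_{e,m_e}(t)=\prod_{e\ge d}S_{e,n_e}(t).
\]

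Next I compare coefficients of \(t^{2d}\). By Lemma~\ref{lem:odd-reciprocal-expansion}, every factor with odd \(e>d\) has the form \(1+O(t^{2e})\). Since \(2e>2d\), such factors do not affect the coefficient of \(t^{2d}\). The factor with index \(d\) is \(1+\binom{m_d}{2}d\,t^{2d}+O(t^{4d})\). Hence the coefficient of \(t^{2d}\) on the left is exactly \(\binom{m_d}{2}d\), and on the right it is \(\binom{n_d}{2}d\). Since \(d\neq0\), this gives \(\binom{m_d}{2}=\binom{n_d}{2}\).

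The only delicate point is that \(m\mapsto\binom m2\) is not injective on \(\{0,1,2,\dots\}\). It identifies \(0\) and \(1\), which is exactly why the theorem carries the hypothesis on the maximum. On \(\{1,2,3,\dots\}\) the map is strictly increasing, and its value is \(0\) only at \(m\in\{0,1\}\).

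I therefore split into two cases.
\begin{itemize}
\item If \(\max\{m_d,n_d\}\ge2\), the common value \(\binom{m_d}{2}=\binom{n_d}{2}\) is positive. Hence both \(m_d,n_d\ge2\), and strict monotonicity gives \(m_d=n_d\).
\item Otherwise \(m_d,n_d\in\{0,1\}\). Then \(S_{d,m_d}=S_{d,n_d}=1\) by Lemma~\ref{lem:odd-reciprocal-expansion}, even though \(m_d\) and \(n_d\) may differ.
\end{itemize}
In both cases the factors of index \(d\) coincide, so the induction can proceed to the next odd integer.

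The base case \(d=1\) is the same argument with no factors cancelled beforehand. The induction terminates because only finitely many multiplicities are nonzero. I expect no serious obstacle. The main care is to carry the statement ``the \(d\)-factors coincide'' through the induction, rather than ``the multiplicities coincide'', because multiplicities \(0\) and \(1\) cannot be distinguished.
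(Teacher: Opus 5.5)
Your proposal is correct and follows essentially the same route as the paper: you take normalized reciprocals of $O_\lambda=O_\mu$, induct over odd $d$ while cancelling the already-matched factors (including the trivial factors $S_{e,0}=S_{e,1}=1$), and compare the $t^{2d}$-coefficients $\binom{m_d}{2}d=\binom{n_d}{2}d$. Carrying ``the $d$-factors coincide'' through the induction, rather than equality of multiplicities, is exactly how the paper handles the $0$/$1$ ambiguity.
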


\begin{proof}
Taking normalized reciprocals and using
Lemma~\ref{lem:normalized-reciprocal-multiplicative}, we obtain
\[
\prod_{\substack{d\ge1\\ d\text{ odd}}}
S_{d,s_d(\lambda)}(t)
=
\prod_{\substack{d\ge1\\ d\text{ odd}}}
S_{d,s_d(\mu)}(t),
\tag{5.3}\label{eq:5.3}
\]
where both products are finite.

We proceed by induction over the positive odd integers.

Fix a positive odd integer \(d\), and suppose that for every positive odd integer \(e<d\), the multiplicities have already been resolved. More precisely, if either \(s_e(\lambda)\) or \(s_e(\mu)\) is at
least two, then the induction hypothesis gives
\[
s_e(\lambda)=s_e(\mu),
\]
so the corresponding complete reciprocal factors may be canceled from both sides of \eqref{eq:5.3}. If instead
\[
s_e(\lambda),s_e(\mu)\in\{0,1\},
\]
then
\[
S_{e,s_e(\lambda)}(t)
=
S_{e,s_e(\mu)}(t)
=
1,
\]
and these factors contribute nothing.

After these cancellations, we obtain
\[
\prod_{\substack{e\ge d\\ e\text{ odd}}}
S_{e,s_e(\lambda)}(t)
=
\prod_{\substack{e\ge d\\ e\text{ odd}}}
S_{e,s_e(\mu)}(t).
\tag{5.4}\label{eq:5.4}
\]

By the reciprocal expansion from Section~4.4,
\[
S_{d,s_d(\lambda)}(t)
=
1+
\binom{s_d(\lambda)}{2}
d\,t^{2d}
+
O(t^{4d}).
\]
For every odd integer \(e>d\),
\[
S_{e,s_e(\lambda)}(t)
=
1+O(t^{2e}),
\]
and since
\[
2e>2d,
\]
none of these factors contributes to the coefficient of \(t^{2d}\) in the left-hand side of \eqref{eq:5.4}. Hence that coefficient is exactly
\[
\binom{s_d(\lambda)}{2}d.
\]

Similarly, the coefficient of \(t^{2d}\) in the right-hand side is
\[
\binom{s_d(\mu)}{2}d.
\]
Therefore
\[
\binom{s_d(\lambda)}{2}
=
\binom{s_d(\mu)}{2}.
\]

The function
\[
m\longmapsto \binom{m}{2}
\]
is injective on the positive integers, with the only ambiguity
\[
\binom{0}{2}
=
\binom{1}{2}
=
0.
\]
Consequently, if
\[
\max\{s_d(\lambda),s_d(\mu)\}\ge2,
\]
then
\[
s_d(\lambda)=s_d(\mu).
\]

This completes the induction.
\end{proof}

\subsection{Recovery of Singleton Odd Parts}
\label{subsec:recovery-singleton-odd}

Theorem~\ref{thm:recovery-higher-odd} determines every odd-part multiplicity except for the ambiguity between multiplicities \(0\) and \(1\). We now show that this remaining ambiguity is precisely the one described by Operation~\(1\).

\begin{theorem}[Recovery of Singleton Odd Parts]
\label{thm:recovery-singleton-odd}
Suppose
\[
O_\lambda(x)=O_\mu(x).
\]
After cancelling the common Hermite factors corresponding to all odd part sizes that occur with multiplicity at least two, the remaining odd parts of \(\lambda\) and \(\mu\) are related by a single legal
application of Operation~\(1\).
\end{theorem}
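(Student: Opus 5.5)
By Theorem~\ref{thm:recovery-higher-odd}, every odd part size $d$ with $\max\{s_d(\lambda),s_d(\mu)\}\ge2$ has $s_d(\lambda)=s_d(\mu)$. Hence the corresponding factors $G_{d,s_d(\lambda)}(x)=G_{d,s_d(\mu)}(x)$ coincide, and we cancel them from both sides of $O_\lambda(x)=O_\mu(x)$. Both products consist of nonzero polynomials, so this cancellation is legitimate in $\mathbb R[x]$.

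What remains are the odd sizes $d$ with $s_d(\lambda),s_d(\mu)\in\{0,1\}$. Let $A$ be the set of odd sizes occurring exactly once in $\lambda$ and not in $\mu$, and let $B$ be the set occurring exactly once in $\mu$ and not in $\lambda$. Sizes occurring exactly once in both contribute the same factor $x^d$ to each side, so we cancel them as well.

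Since $G_{d,1}(x)=x^d$ and $G_{d,0}(x)=1$, the reduced identity reads
\[
\prod_{a\in A}x^{a}=\prod_{b\in B}x^{b},
\]
that is, $x^{\sum_{a\in A}a}=x^{\sum_{b\in B}b}$. Comparing degrees gives $\sum_{a\in A}a=\sum_{b\in B}b$.

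It remains to check that the passage from $\lambda$ to $\mu$ on these parts is a legal instance of Operation~1.
\begin{itemize}
\item The elements of $A$ are distinct odd parts, each occurring in $\lambda$ with multiplicity exactly one.
\item The elements of $B$ are distinct odd parts not occurring in $\lambda$: for $b\in B$ we have $s_b(\lambda)=0$ by definition.
\item $A$ and $B$ are disjoint, and the two sums agree.
\end{itemize}
Removing $A$ and inserting $B$ therefore changes only these odd multiplicities and produces $\mu$'s odd singleton configuration from $\lambda$'s. If $A=B=\emptyset$ this is the trivial (empty) application, and the degree equality forces $A=\emptyset$ exactly when $B=\emptyset$.

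The only point needing care is the bookkeeping: we must verify that every odd size falls into exactly one class (multiplicity $\ge2$ in either partition, singleton in both, or singleton in exactly one), so that the cancelled identity has precisely the stated form. Here Theorem~\ref{thm:recovery-higher-odd} rules out the case where one partition has multiplicity $\ge2$ and the other has $0$ or $1$. No analytic input is needed beyond that theorem; the remaining argument is degree comparison of monomials.
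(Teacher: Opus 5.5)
Your proof is correct and follows the paper's argument. You cancel the factors with multiplicity at least two via Theorem~\ref{thm:recovery-higher-odd}, reduce to $\prod_{a\in A}x^{a}=\prod_{b\in B}x^{b}$, compare degrees, and check legality of Operation~1. The difference is only cosmetic: you build $s_b(\lambda)=0$ into the definition of $B$ and justify that classification in your bookkeeping remark. The paper instead sets $B=A_\mu\setminus A_\lambda$ and then proves $s_b(\lambda)=0$ using the same theorem.
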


\begin{proof}
By Theorem~\ref{thm:recovery-higher-odd}, for every positive odd integer \(d\), if either \(s_d(\lambda)\) or \(s_d(\mu)\) is at least two, then
\[
s_d(\lambda)=s_d(\mu).
\]
Hence the corresponding generalized Hermite factors
\[
\He_{s_d(\lambda)}^{[-d]}(x^d)
\qquad\text{and}\qquad
\He_{s_d(\mu)}^{[-d]}(x^d)
\]
are identical and may be cancelled from the equality
\[
O_\lambda(x)=O_\mu(x).
\]

After all such cancellations, every remaining odd part has multiplicity either \(0\) or \(1\). Define
\[
A_\lambda
=
\left\{
d\ge1:
d\text{ is odd and }s_d(\lambda)=1
\right\},
\]
and
\[
A_\mu
=
\left\{
d\ge1:
d\text{ is odd and }s_d(\mu)=1
\right\}.
\]

For every positive odd integer \(d\) occurring with multiplicity one,
\[
\He_1^{[-d]}(x^d)=x^d.
\]
Therefore the residual polynomial equality is
\begin{equation*}
\label{eq:singleton-residual-products}
\prod_{d\in A_\lambda}x^d
=
\prod_{e\in A_\mu}x^e.
\end{equation*}
Equivalently,
\[
x^{\sum_{d\in A_\lambda}d}
=
x^{\sum_{e\in A_\mu}e},
\]
and hence
\begin{equation}
\label{eq:singleton-total-sums}
\sum_{d\in A_\lambda}d
=
\sum_{e\in A_\mu}e. \tag{5.5} 
\end{equation}

Set
\[
A=A_\lambda\setminus A_\mu,
\qquad
B=A_\mu\setminus A_\lambda.
\]
Then \(A\) and \(B\) are disjoint sets of positive odd integers.
Removing from both sides of
\eqref{eq:singleton-total-sums} the contribution of the common set
\(A_\lambda\cap A_\mu\) gives
\begin{equation}
\label{eq:singleton-operation-one-sums}
\sum_{a\in A}a
=
\sum_{b\in B}b. \tag{5.6}
\end{equation}

Every part in \(A\) occurs in \(\lambda\) with multiplicity exactly
one. We claim that every part in \(B\) is absent from \(\lambda\).

Indeed, let \(b\in B\). By definition,
\[
s_b(\mu)=1
\]
and
\[
s_b(\lambda)\neq1.
\]
If
\[
s_b(\lambda)\ge2,
\]
then Theorem~\ref{thm:recovery-higher-odd} would imply
\[
s_b(\lambda)=s_b(\mu),
\]
contradicting
\[
s_b(\mu)=1.
\]
Therefore
\[
s_b(\lambda)=0.
\]

Thus the distinct odd parts belonging to \(A\) may be removed from \(\lambda\), while the distinct odd parts belonging to \(B\) may be inserted. Every removed part has multiplicity exactly one before the
move, every inserted part is absent before the move, and \eqref{eq:singleton-operation-one-sums} shows that the total sum of the removed parts equals the total sum of the inserted parts. Consequently, this replacement is a legal application of Operation~\(1\).

All odd part sizes outside \(A\cup B\) already occur with the same multiplicity in the two partitions. Hence this single application of Operation~\(1\) transforms the odd subpartition of \(\lambda\) into
the odd subpartition of \(\mu\).
\end{proof}

\subsection{Completion of the Case \(e_\lambda=e_\mu\)}
\label{subsec:completion-equal-even-weight}

We now combine the preceding results to complete the proof of the equal even-weight case.

\begin{theorem}
\label{thm:equal-even-weight-classification}
Let \(\lambda\) and \(\mu\) be partitions of the same integer \(n\).
Suppose
\[
P_\lambda(x)=P_\mu(x)
\]
and
\[
e_\lambda=e_\mu.
\]
Then
\[
\lambda\sim\mu.
\]
Moreover, \(\lambda\) and \(\mu\) are related by a finite sequence of applications of Operation~\(1\).
\end{theorem}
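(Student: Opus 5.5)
The plan is to assemble the results of this section in order. First, Theorem~\ref{thm:equal-even-weight-reduction} applies to the hypotheses \(P_\lambda=P_\mu\) and \(e_\lambda=e_\mu\). It gives \(E_\lambda(x)=E_\mu(x)\) and \(O_\lambda(x)=O_\mu(x)\). From the first identity, Theorem~\ref{thm:recovery-even-subpartition} shows that \(s_d(\lambda)=s_d(\mu)\) for every even \(d\). Hence the even subpartitions of \(\lambda\) and \(\mu\) coincide, and any discrepancy between the two partitions lies in the odd parts.

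Next I use \(O_\lambda=O_\mu\). By Theorem~\ref{thm:recovery-higher-odd}, every odd part size \(d\) with \(\max\{s_d(\lambda),s_d(\mu)\}\ge2\) has the same multiplicity in both partitions. The remaining odd sizes have multiplicities in \(\{0,1\}\). Theorem~\ref{thm:recovery-singleton-odd} then produces disjoint sets \(A\) and \(B\) of odd integers with \(\sum_{a\in A}a=\sum_{b\in B}b\). Each \(a\in A\) occurs in \(\lambda\) exactly once, and each \(b\in B\) is absent from \(\lambda\). Replacing \(A\) by \(B\) turns the odd subpartition of \(\lambda\) into that of \(\mu\).

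To finish, I check that this replacement is a legal Operation~1 on the whole partition \(\lambda\), not merely on its odd subpartition. The conditions of Definition~\ref{def:two-operations} concern only the multiplicities in \(\lambda\) of the parts in \(A\) and \(B\), and these are already verified. The even parts and the odd parts outside \(A\cup B\) are untouched, so one application of Operation~1 sends \(\lambda\) to \(\mu\). If \(A=B=\emptyset\), then \(\lambda=\mu\) and the sequence of operations is empty. In either case \(\lambda\sim\mu\), and the operations used are only of type~1.

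There is no serious obstacle, since all the substantive work is in the cited theorems. The one point needing care is the degenerate case where exactly one of \(A\) and \(B\) is empty. This cannot happen: all parts are positive, so a sum over a nonempty set is positive, and the equality of sums then forces both sets to be empty or both nonempty.
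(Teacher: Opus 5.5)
Your proposal is correct and follows the paper's proof essentially step for step. You reduce to $E_\lambda=E_\mu$ and $O_\lambda=O_\mu$ via Theorem~\ref{thm:equal-even-weight-reduction}, recover the even subpartition, match the repeated odd parts, and finish with the single Operation~1 supplied by Theorem~\ref{thm:recovery-singleton-odd}. Your treatment of the degenerate cases ($A=B=\emptyset$ giving $\lambda=\mu$, and exactly one of $A,B$ being empty being impossible) is a small refinement that the paper leaves implicit.
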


\begin{proof}
Suppose
\[
P_\lambda(x)=P_\mu(x)
\]
and
\[
e_\lambda=e_\mu.
\]

By Theorem~\ref{thm:equal-even-weight-reduction},
\[
E_\lambda(x)=E_\mu(x)
\]
and
\[
O_\lambda(x)=O_\mu(x).
\]

Applying
Theorem~\ref{thm:recovery-even-subpartition},
we conclude that the even subpartitions of \(\lambda\) and \(\mu\) are identical.

Next, Theorem~\ref{thm:recovery-higher-odd} shows that every odd part occurring with multiplicity at least two
appears in both partitions with exactly the same multiplicity. Cancelling these common odd Hermite factors leaves only odd parts occurring with multiplicity one.

Finally,
Theorem~\ref{thm:recovery-singleton-odd}
shows that the remaining singleton odd parts are related by a single legal application of Operation~\(1\).

Thus the even parts already agree, every repeated odd part already agrees, and the remaining singleton odd parts differ only by Operation~\(1\). Consequently, \(\lambda\) and \(\mu\) are related by
a finite sequence of applications of Operation~\(1\).

Therefore
\[
\lambda\sim\mu. \qedhere
\]
\end{proof}

\section{The Reciprocal Core and Logarithmic Linearization}
\label{sec:unequal-even-weight}

A significant obstacle in classifying collisions of twisted Foulkes character polynomials is that generalized Hermite polynomials do not admit a naive root-separation principle: distinct generalized Hermite
polynomials with different parameters may have common nonzero roots. For example,
\[
\He^{[-3]}_{2}(y)=y^{2}+3,
\qquad
\He^{[-1]}_{3}(y)=y(y^{2}+3),
\]
share the nonzero roots \(\pm i\sqrt{3}\). Consequently, the classification cannot be obtained by identifying generalized Hermite factors from their roots alone. The challenge is therefore not to distinguish generalized Hermite roots, but to prove that such local root coincidences cannot give rise to any global collisions beyond the two explicit combinatorial operations described in this paper. In this section we develop the analytic machinery needed for the exceptional case $e_\lambda \neq e_\mu$. In the next section, this machinery will be used to recover the exact multiplicities of the parts \(2\) and \(4\), and subsequently all remaining even multiplicities.

Recall
\[
F_{d,m}(x)
=
\He^{[-d]}_m(1-x^d).
\]
Thus, if \(\nu\) is a partition, then its even-part polynomial is
\[
E_\nu(x)
=
\prod_{\substack{d\geq 2\\ d\text{ even}}}
F_{d,s_d(\nu)}(x),
\]
where only finitely many factors are nonconstant.

For \(m\geq 0\), define
\[
\varepsilon(m)
=
\begin{cases}
0,&m\text{ is even},\\
1,&m\text{ is odd}.
\end{cases}
\]

\subsection{Cyclotomic Decomposition of the Even Hermite Factors}
\label{subsec:cyclotomic-decomposition}

We begin by separating the unit-circle roots of an even Hermite factor
from its remaining roots.

\begin{lemma}
\label{lem:unit-circle-roots-even-factor}
Let \(d\) be a positive even integer and let \(m\geq 0\). A complex
number \(z\) with \(\lvert z\rvert=1\) is a root of
\[
F_{d,m}(x)
=
\He^{[-d]}_m(1-x^d)
\]
if and only if \(m\) is odd and
\(
z^d=1.
\)
Moreover, every such root is simple.
\end{lemma}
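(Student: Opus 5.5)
The plan is to use the root geometry from Proposition~\ref{prop:even-root-geometry}. The case \(m=0\) is trivial, since \(F_{d,0}=1\) has no roots and \(0\) is even. So assume \(m\ge1\) and let \(|z|=1\) be a root of \(F_{d,m}\).

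Proposition~\ref{prop:even-root-geometry} gives \(1-z^d=i\sqrt d\,\rho\) with \(\rho\) a real root of \(\He_m\). Put \(w=z^d\), so \(|w|=1\) and \(w=1-i\sqrt d\,\rho\). Then the real part of \(w\) equals \(1\). The only point of the unit circle with real part \(1\) is \(w=1\), which forces \(\rho=0\) and \(z^d=1\). Since \(\rho=0\) must be a root of \(\He_m\), I then invoke the parity of Hermite polynomials: \(\He_m(0)=0\) exactly when \(m\) is odd. This follows from the explicit formula, because for \(m\) even the constant term is \((-1)^{m/2}(m-1)!!\neq0\), while for \(m\) odd every monomial has odd degree. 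Hence \(m\) is odd.

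For the converse, suppose \(m\) is odd and \(z^d=1\). Then \(F_{d,m}(z)=\He_m^{[-d]}(0)\). This vanishes because, for odd \(m\), every term \(y^{m-2j}\) in \(\He_m^{[-d]}(y)\) has positive odd degree.

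For simplicity, write \(F_{d,m}=h\circ g\) with \(h=\He_m^{[-d]}\) and \(g(x)=1-x^d\). By the chain rule, \(F_{d,m}'(z)=h'(0)\,g'(z)=-d\,z^{d-1}h'(0)\). Here \(g'(z)\neq0\) since \(z\neq0\). Also \(h'(0)\neq0\), because \(0\) is a simple root of \(h\) by Corollary~\ref{cor:generalized-hermite-roots}. Alternatively, the coefficient of \(y\) in \(h\) is \(m\,(m-2)!!\,d^{(m-1)/2}\neq0\). So \(F_{d,m}'(z)\neq0\), and the root is simple.

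None of these steps is a real obstacle. The only point needing care is the observation that a unit-modulus \(w\) with real part \(1\) must equal \(1\). This is immediate from \(|w|^2=1+d\rho^2\).
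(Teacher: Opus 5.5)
Your proposal is correct and follows essentially the paper's own argument. The modulus computation $1=|z^d|^2=1+d\rho^2$ forces $\rho=0$ and $z^d=1$, the parity of $\He_m$ at $0$ forces $m$ odd, and simplicity follows because $0$ is a simple root of $\He_m^{[-d]}$ and $-dx^{d-1}$ does not vanish at $d$-th roots of unity; you only spell out the $m=0$ case and the converse direction more explicitly than the paper does.
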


\begin{proof}
The roots of \(\He^{[-d]}_m(y)\) are purely imaginary. Indeed, using the
ordinary probabilists' Hermite polynomial \(\He_m\), one has
\[
\He^{[-d]}_m(y)
=
(i\sqrt d)^m
\He_m\left(\frac{y}{i\sqrt d}\right).
\]
Hence every root of \(\He^{[-d]}_m\) has the form
\[
i\sqrt d\,\rho,
\]
where \(\rho\) is a real root of \(\He_m\).

Suppose that \(\lvert z\rvert=1\) and
\[
F_{d,m}(z)=0.
\]
Then
\[
1-z^d=i\sqrt d\,\rho
\]
for some real Hermite root \(\rho\). Therefore
\[
z^d=1-i\sqrt d\,\rho.
\]
Taking absolute values gives
\[
1
=
\lvert z^d\rvert^2
=
\left|1-i\sqrt d\,\rho\right|^2
=
1+d\rho^2.
\]
Thus \(\rho=0\), and consequently
\[
z^d=1.
\]

The ordinary Hermite polynomial \(\He_m\) has \(0\) as a root if and only if \(m\) is odd. Hence unit-circle roots occur precisely when \(m\) is odd, and in that case they are exactly the \(d\)-th roots of unity.

The root \(0\) of \(\He_m\) is simple. Moreover,
\[
\frac{d}{dx}(1-x^d)
=
-dx^{d-1},
\]
which is nonzero at every \(d\)-th root of unity. Therefore the substitution \(y=1-x^d\) preserves the multiplicity of these roots, and each unit-circle root of \(F_{d,m}\) is simple.
\end{proof}

\begin{definition}
\label{def:hermite-core}
For every positive even integer \(d\) and every \(m\geq 0\), define the \emph{Hermite core} \(C_{d,m}(x)\) by
\[
F_{d,m}(x)
=
(1-x^d)^{\varepsilon(m)}C_{d,m}(x).
\]
\end{definition}

\begin{proposition}
\label{prop:core-no-cyclotomic-factors}
The polynomial \(C_{d,m}(x)\) has no roots on the unit circle. In particular, it has no cyclotomic polynomial as a divisor.
\end{proposition}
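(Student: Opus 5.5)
The plan is to derive the statement directly from Lemma~\ref{lem:unit-circle-roots-even-factor}, by comparing root multiplicities in the defining factorization
\[
F_{d,m}(x)=(1-x^d)^{\varepsilon(m)}C_{d,m}(x).
\]
First we check that \(C_{d,m}\) is a genuine polynomial. If \(m\) is even there is nothing to check. If \(m\) is odd, then \(0\) is a root of \(\He_m\), so \(\He^{[-d]}_m(y)\) has \(y\) as a factor. Hence \(1-x^d\) divides \(F_{d,m}(x)\), and the quotient is a polynomial.

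Now suppose \(|z|=1\) and \(C_{d,m}(z)=0\). Then \(F_{d,m}(z)=0\), and Lemma~\ref{lem:unit-circle-roots-even-factor} forces two things: \(m\) is odd and \(z^d=1\). Since \(m\) is odd, the factorization reads \(F_{d,m}=(1-x^d)\,C_{d,m}\). The factor \(1-x^d\) vanishes at \(z\), and so does \(C_{d,m}\). Therefore \(z\) is a root of \(F_{d,m}\) of multiplicity at least two. This contradicts the simplicity statement in Lemma~\ref{lem:unit-circle-roots-even-factor}.

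It follows that \(C_{d,m}\) has no roots on the unit circle. Every root of a cyclotomic polynomial is a root of unity and so lies on the unit circle. Hence no cyclotomic polynomial can divide \(C_{d,m}\), which gives the second assertion.

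The only delicate point is the multiplicity bookkeeping, and it rests on two facts. The first is that \(1-x^d\) has simple roots exactly at the \(d\)-th roots of unity. The second is that the lemma guarantees each such root is simple in \(F_{d,m}\). Both are already available from the lemma, so no real obstacle is expected.
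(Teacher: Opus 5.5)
Your proof is correct and follows the paper's argument. Both rest entirely on Lemma~\ref{lem:unit-circle-roots-even-factor}: the unit-circle roots of \(F_{d,m}\) occur only for odd \(m\), are exactly the \(d\)-th roots of unity, and are simple, so dividing by \(1-x^d\) leaves none behind. Your explicit check that \(1-x^d\) divides \(F_{d,m}\) when \(m\) is odd, and your phrasing of the multiplicity step as a contradiction, are just slightly more careful versions of the same reasoning.
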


\begin{proof}
By Lemma~\ref{lem:unit-circle-roots-even-factor}, the unit-circle roots of \(F_{d,m}\) are present only when \(m\) is odd, and in that case they are precisely the simple roots of \(1-x^d\). Dividing by \(1-x^d\)
therefore removes all unit-circle roots of \(F_{d,m}\). Hence \(C_{d,m}\) has no unit-circle roots.

Every root of a cyclotomic polynomial lies on the unit circle. Thus \(C_{d,m}\) cannot have a cyclotomic polynomial as a divisor.
\end{proof}

For a partition \(\nu\), define
\[
U_\nu(x)
=
\prod_{\substack{d\geq 2\\ d\text{ even}}}
(1-x^d)^{\varepsilon(s_d(\nu))}
\]
and
\[
C_\nu(x)
=
\prod_{\substack{d\geq 2\\ d\text{ even}}}
C_{d,s_d(\nu)}(x).
\]
Then
\[
E_\nu(x)=U_\nu(x)C_\nu(x).
\]

The polynomial \(U_\nu\) is the complete cyclotomic part of \(E_\nu\), while \(C_\nu\) has no cyclotomic divisors.

\subsection{Cyclotomic Multiset Comparison}
\label{subsec:cyclotomic-multiset-comparison}

We next record the cyclotomic consequence of an identity of the form
\[
E_\lambda(x)
=
(x^2+1)E_\mu(x).
\]

For a positive integer \(r\), let \(\Phi_r(x)\) denote the \(r\)-th cyclotomic polynomial. Recall that
\[
1-x^d
=
-\prod_{r\mid d}\Phi_r(x).
\]

For every positive integer \(r\), the multiplicity of \(\Phi_r\) in
\(U_\nu\) is
\[
c_r(\nu)
=
\sum_{\substack{d\geq 2\\ d\text{ even}\\ r\mid d}}
\varepsilon(s_d(\nu)).
\]

\begin{proposition}
\label{prop:cyclotomic-parity-exceptional}
Let \(\lambda\) and \(\mu\) be partitions satisfying
\[
E_\lambda(x)
=
(x^2+1)E_\mu(x).
\]
Then
\[
s_2(\lambda)\equiv 0\pmod 2,
\qquad
s_2(\mu)\equiv 1\pmod 2,
\]
and
\[
s_4(\lambda)\equiv 1\pmod 2,
\qquad
s_4(\mu)\equiv 0\pmod 2.
\]
Moreover,
\[
s_d(\lambda)\equiv s_d(\mu)\pmod 2
\]
for every positive even integer \(d\neq 2,4\).
\end{proposition}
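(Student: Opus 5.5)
The plan is to compare cyclotomic multiplicities on both sides of \(E_\lambda=(x^2+1)E_\mu\). Use the decomposition \(E_\nu=U_\nu C_\nu\) from Section~\ref{subsec:cyclotomic-decomposition}. Since \(x^2+1=\Phi_4(x)\) is cyclotomic, the identity reads
\[
U_\lambda C_\lambda=\Phi_4\,U_\mu C_\mu .
\]
By Proposition~\ref{prop:core-no-cyclotomic-factors}, neither \(C_\lambda\) nor \(C_\mu\) has a cyclotomic divisor. By unique factorization in \(\mathbb{Q}[x]\) (each \(\Phi_r\) is irreducible), the multiplicity of each \(\Phi_r\) on the two sides comes entirely from the \(U\)-factors. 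This gives
\[
c_r(\lambda)=c_r(\mu)+\delta_{r,4}\qquad\text{for every } r\ge1 .
\]

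Next, translate this into parities. Write \(\delta_d=\varepsilon(s_d(\lambda))-\varepsilon(s_d(\mu))\in\{-1,0,1\}\) for even \(d\), so that
\[
\sum_{\substack{d\ \text{even}\\ r\mid d}}\delta_d=\delta_{r,4}\qquad\text{for all } r .
\]
I would invert this by downward induction on even \(d\), using finite support. Let \(D\) be the largest even \(d\) with \(\delta_d\ne0\). Taking \(r=D\), the sum equals \(\delta_D\), because \(D\) is the only even multiple of \(D\) that is at most \(D\), and larger multiples have \(\delta=0\). So \(\delta_D=\delta_{D,4}\). Hence either no such \(D\) exists or \(D=4\), and in the latter case \(\delta_4=1\). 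Equivalently, Möbius-style: for each even \(r\), \(\delta_r=\delta_{r,4}-\sum_{r\mid d,\,d>r}\delta_d\), and the induction from the top shows \(\delta_d=0\) for every even \(d>4\).

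Then handle \(r=4\) and \(r=2\) (or \(r=1\)). The \(r=4\) equation gives \(\delta_4=1\), hence \(\varepsilon(s_4(\lambda))=1\) and \(\varepsilon(s_4(\mu))=0\). The \(r=2\) equation gives \(\delta_2+\delta_4=0\), so \(\delta_2=-1\), i.e.\ \(s_2(\lambda)\) is even and \(s_2(\mu)\) is odd. For even \(d\ne2,4\) we have \(\delta_d=0\), which is the claimed congruence \(s_d(\lambda)\equiv s_d(\mu)\pmod 2\).

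The only delicate point is the first step: justifying that cyclotomic multiplicities can be read off from the \(U\)-parts alone. This needs the cores to be free of every \(\Phi_r\), including \(\Phi_4\), and that is exactly Proposition~\ref{prop:core-no-cyclotomic-factors}. Once that is in place, the rest is a finite triangular linear system over the divisibility order.
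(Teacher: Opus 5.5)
Your proposal is correct and follows the paper's proof essentially step for step. Like the paper, you compare $\Phi_r$-multiplicities through $E_\nu=U_\nu C_\nu$ and Proposition~\ref{prop:core-no-cyclotomic-factors}, solve $\sum_{d \text{ even},\, r\mid d}\delta_d=\delta_{r,4}$ by looking at the largest even $d$ with $\delta_d\neq0$, and finish with the equations for $r=4$ and $r=2$; your explicit treatment of the case where no such $D$ exists, which the $r=4$ equation rules out, is a small point the paper leaves implicit.
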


\begin{proof}
Since
\[
x^2+1=\Phi_4(x),
\]
we have,
\begin{align*}
E_\lambda(x)
&=
(x^2+1)E_\mu(x).\\
\implies \qquad  U_\lambda(x)C_\lambda(x)
&=
\Phi_4(x)\,U_\mu(x)C_\mu(x).
\end{align*}
By Proposition~\ref{prop:core-no-cyclotomic-factors}, the Hermite cores \(C_\lambda(x)\) and \(C_\mu(x)\) contain no cyclotomic factors. Consequently, the multiplicity of each cyclotomic polynomial \(\Phi_r(x)\) is determined entirely by the cyclotomic parts \(U_\lambda(x)\) and \(U_\mu(x)\).

For \(r\ge1\), let \(c_r(\nu)\) denote the multiplicity of \(\Phi_r(x)\) in \(U_\nu(x)\). Comparing the multiplicity of \(\Phi_r(x)\) on the two sides of the above identity, we obtain
\[
c_r(\lambda)-c_r(\mu)
=
\begin{cases}
1,&r=4,\\
0,&r\neq4,
\end{cases}
\]
since the only additional cyclotomic factor on the right-hand side is
the single factor \(\Phi_4(x)\).

Set
\[
\delta_d
=
\varepsilon(s_d(\lambda))
-
\varepsilon(s_d(\mu)).
\]
Then
\[
\sum_{\substack{d\geq 2\\ d\text{ even}\\ r\mid d}}
\delta_d
=
\begin{cases}
1,&r=4,\\
0,&r\neq 4.
\end{cases}
\tag{\(\ast\)} \label{eq:star}
\]

Note that only finitely many \(\delta_d\) are nonzero. Let \(D\) be the largest
even integer such that \(\delta_D\neq 0\). Substituting \(r=D\) into \eqref{eq:star}, the only nonzero term in the sum is \(\delta_D\). Hence
\[
\delta_D
=
\begin{cases}
1,&D=4,\\
0,&D\neq 4.
\end{cases}
\]
Therefore \(D=4\), so
\[
\delta_d=0
\qquad
\text{for every even }d>4.
\]

Substituting \(r=4\) into \eqref{eq:star} now gives
\[
\delta_4=1.
\]
Substituting \(r=2\) gives
\[
\delta_2+\delta_4=0,
\]
and hence
\[
\delta_2=-1.
\]

Therefore
\[
\varepsilon(s_2(\lambda))=0,
\qquad
\varepsilon(s_2(\mu))=1,
\]
and
\[
\varepsilon(s_4(\lambda))=1,
\qquad
\varepsilon(s_4(\mu))=0.
\]
For every other positive even \(d\),
\[
\varepsilon(s_d(\lambda))
=
\varepsilon(s_d(\mu)).
\]
This is equivalent to the stated parity relations.
\end{proof}

\begin{corollary}
\label{cor:core-equality-exceptional}
Under the hypotheses of
Proposition~\ref{prop:cyclotomic-parity-exceptional}, we have
\[
C_\lambda(x)=C_\mu(x).
\]
\end{corollary}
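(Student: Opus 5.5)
The plan is to compare the cyclotomic parts directly and then cancel them. We have $E_\lambda = U_\lambda C_\lambda$ and $E_\mu = U_\mu C_\mu$. The hypothesis therefore reads
\[
U_\lambda(x)C_\lambda(x)=\Phi_4(x)\,U_\mu(x)C_\mu(x).
\]
The goal is to prove the identity $U_\lambda(x)=-\Phi_4(x)U_\mu(x)$, or the analogous identity with the correct sign or constant. Once that holds, the nonzero factor $U_\lambda$ can be cancelled in the integral domain $\mathbb R[x]$, which leaves $C_\lambda=C_\mu$.

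For the cyclotomic identity I would use the parities from Proposition~\ref{prop:cyclotomic-parity-exceptional}. These say $\varepsilon(s_d(\lambda))=\varepsilon(s_d(\mu))$ for every even $d\ne 2,4$, together with $\varepsilon(s_2(\lambda))=0$, $\varepsilon(s_2(\mu))=1$, $\varepsilon(s_4(\lambda))=1$ and $\varepsilon(s_4(\mu))=0$. Substituting into the definition of $U_\nu$, the factors with $d\neq 2,4$ agree and cancel. What remains is
\[
U_\lambda = (1-x^4)\,W,\qquad U_\mu=(1-x^2)\,W,
\]
where $W$ is the common product over $d\ne2,4$. Since $1-x^4=(1-x^2)(x^2+1)$, this gives exactly $U_\lambda=(x^2+1)U_\mu$, with no sign discrepancy, and $U_\mu\neq0$.

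Substituting back gives $(x^2+1)U_\mu C_\lambda=(x^2+1)U_\mu C_\mu$, and cancelling the nonzero factor $(x^2+1)U_\mu$ gives $C_\lambda=C_\mu$.

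An alternative route would be to compare the cyclotomic and non-cyclotomic parts via unique factorization, using Proposition~\ref{prop:core-no-cyclotomic-factors}. That route would leave a leading-constant ambiguity to check, so the explicit parity computation above is cleaner and avoids it. I expect no real obstacle here: the only point needing care is to read off the parities correctly from Proposition~\ref{prop:cyclotomic-parity-exceptional}, so that the $d=2$ and $d=4$ factors combine into precisely $x^2+1$.
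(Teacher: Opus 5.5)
Your proof is correct and follows the paper's argument: establish $U_\lambda=(x^2+1)U_\mu$, substitute into $U_\lambda C_\lambda=(x^2+1)U_\mu C_\mu$, and cancel the nonzero factor $(x^2+1)U_\mu$. The only cosmetic difference is how you get the cyclotomic identity. You read it off from the exact parities in Proposition~\ref{prop:cyclotomic-parity-exceptional} via $1-x^4=(1-x^2)(x^2+1)$, whereas the paper uses the cyclotomic multiplicity comparison and fixes the scalar by noting that both sides have constant term $1$.
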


\begin{proof}
Recall
\[
E_\lambda=U_\lambda C_\lambda
\qquad\text{and}\qquad
E_\mu=U_\mu C_\mu.
\]
Since both $U_\lambda(x)$ and $(x^2+1)U_\mu(x)$ have constant
term $1$, the cyclotomic comparison gives
\[
U_\lambda(x)=(x^2+1)U_\mu(x).
\]
Substituting this into
\[
E_\lambda=(x^2+1)E_\mu
\]
and canceling the nonzero polynomial
\[
(x^2+1)U_\mu
\]
gives
\[
C_\lambda=C_\mu. \qedhere
\]
\end{proof}

\subsection{Normalized Reciprocals of the Hermite Cores}
\label{subsec:normalized-reciprocal-cores}

Define the normalized reciprocal of \(F_{d,m}\) by
\[
R_{d,m}(t)
=
(-1)^m t^{dm}F_{d,m}(t^{-1}).
\]

Using
\[
F_{d,m}(x)
=
\He^{[-d]}_m(1-x^d),
\]
we may write
\[
R_{d,m}(t)
=
(-1)^m t^{dm}
\He^{[-d]}_m(1-t^{-d}).
\]

Since the normalized reciprocal of \(1-x^d\) is \(1-t^d\), define
\[
T_{d,m}(t)
=
\frac{R_{d,m}(t)}
     {(1-t^d)^{\varepsilon(m)}}.
\]
Thus \(T_{d,m}\) is the normalized reciprocal of the Hermite core
\(C_{d,m}\).

\begin{lemma}
\label{lem:even-reciprocal-first-terms}
For every positive even integer \(d\) and every \(m\geq 0\),
\[
R_{d,m}(t)
=
1-mt^d
+
(d+1)\binom{m}{2}t^{2d}
+
O(t^{3d}).
\]
\end{lemma}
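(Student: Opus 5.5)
We will reuse the explicit expansion already obtained in the proof of Lemma~\ref{lem:even-reciprocal-expansion}. There we showed that
\[
R_{d,m}(t)
=
(-1)^m
\sum_{j=0}^{\lfloor m/2\rfloor}
\binom{m}{2j}(2j-1)!!\,d^j\,
t^{2jd}(t^d-1)^{m-2j}
=
\sum_{j=0}^{\lfloor m/2\rfloor}
\binom{m}{2j}(2j-1)!!\,d^j\,
t^{2jd}(1-t^d)^{m-2j},
\]
where the signs combine because \((-1)^m(t^d-1)^{m-2j}=(1-t^d)^{m-2j}\). The case \(m=0\) is trivial, since \(R_{d,0}=1\) and \(\binom02=0\). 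So we may assume \(m\ge1\). It is convenient to substitute \(u=t^d\). Each summand then becomes a polynomial in \(u\), and the claim reduces to computing the coefficients of \(u^0,u^1,u^2\).

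We sort the summands by their lowest degree in \(u\). Each summand with index \(j\) is divisible by \(u^{2j}\). Hence only \(j=0\) and \(j=1\) can contribute below degree \(3\) in \(u\). Terms with \(j\ge2\) begin at \(u^4\) and are absorbed into \(O(t^{3d})\).

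\begin{itemize}
\item The \(j=0\) term is \((1-u)^m=1-mu+\binom m2u^2+O(u^3)\).
\item The \(j=1\) term is \(\binom m2\,d\,u^2(1-u)^{m-2}=\binom m2 d\,u^2+O(u^3)\), using \((1)!!=1\). This term is present only when \(m\ge2\). When \(m=1\) its coefficient \(\binom12=0\) vanishes anyway, so the formula remains correct.
\end{itemize}

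Adding these contributions gives the coefficient \(\binom m2+d\binom m2=(d+1)\binom m2\) for \(u^2\). This is the asserted expansion after substituting back \(u=t^d\).

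There is no real obstacle in this argument. The only points needing care are the bookkeeping of the sign \((-1)^m\) and checking that the small cases \(m=0,1\) fit the uniform formula. Both are handled by the binomial convention \(\binom m2=0\) for \(m<2\).
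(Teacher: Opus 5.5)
Your proof is correct and takes essentially the same approach as the paper. You isolate the $j=0$ and $j=1$ summands of the explicit Hermite expansion, read off $\binom{m}{2}$ and $d\binom{m}{2}$ as their $t^{2d}$ contributions, and note that the $j\ge2$ terms begin in degree $4d$. Reusing the form $\sum_j \binom{m}{2j}(2j-1)!!\,d^j u^{2j}(1-u)^{m-2j}$ from Lemma~\ref{lem:even-reciprocal-expansion} makes the sign bookkeeping slightly cleaner than the paper's direct expansion of $(1-t^{-d})^m$.
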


\begin{proof}
Using the explicit formula
\[
\He^{[-d]}_m(y)
=
\sum_{j=0}^{\lfloor m/2\rfloor}
\binom{m}{2j}(2j-1)!!\,d^j y^{m-2j},
\]
we substitute \(y=1-t^{-d}\). The highest-degree contribution in
\(t^{-d}\) comes from \(j=0\):
\[
(1-t^{-d})^m
=
(-1)^m t^{-dm}
\left(
1-mt^d+\binom{m}{2}t^{2d}+O(t^{3d})
\right).
\]

The term \(j=1\) contributes
\[
\binom{m}{2}d(1-t^{-d})^{m-2}.
\]
After multiplication by \((-1)^m t^{dm}\), its contribution begins in
degree \(2d\), with coefficient
\[
d\binom{m}{2}.
\]

Every term with \(j\geq 2\) begins in degree at least \(4d\).
Consequently,
\[
R_{d,m}(t)
=
1-mt^d
+
\left(
\binom{m}{2}
+
d\binom{m}{2}
\right)t^{2d}
+
O(t^{3d}),
\]
which gives
\[
R_{d,m}(t)
=
1-mt^d
+
(d+1)\binom{m}{2}t^{2d}
+
O(t^{3d}).\qedhere
\]
\end{proof}

\begin{lemma}
\label{lem:core-reciprocal-first-term}
For every positive even integer \(d\) and every \(m\geq 0\),
\[
T_{d,m}(t)
=
1-
\bigl(m-\varepsilon(m)\bigr)t^d
+
O(t^{2d}).
\]
Equivalently,
\[
T_{d,m}(t)
=
1-
2\left\lfloor\frac m2\right\rfloor t^d
+
O(t^{2d}).
\]
\end{lemma}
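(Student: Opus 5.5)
The plan is to divide the expansion of Lemma~\ref{lem:even-reciprocal-first-terms} by \((1-t^d)^{\varepsilon(m)}\) as a formal power series in \(t\). We only need to track terms up to degree \(d\), so the quadratic coefficient \((d+1)\binom m2\) in that lemma plays no role. It suffices to know that
\[
R_{d,m}(t)=1-mt^d+O(t^{2d}),
\]
which is already Lemma~\ref{lem:even-reciprocal-expansion}.

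\textbf{Case \(m\) even.} Here \(\varepsilon(m)=0\), so \(T_{d,m}=R_{d,m}\). The claim then reads \(1-mt^d+O(t^{2d})\). This agrees with the statement because \(m-\varepsilon(m)=m=2\lfloor m/2\rfloor\).

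\textbf{Case \(m\) odd.} Here \(\varepsilon(m)=1\), and we must first confirm that \(T_{d,m}\) is a polynomial. This holds because \(C_{d,m}\) is a polynomial (Definition~\ref{def:hermite-core}, using Lemma~\ref{lem:unit-circle-roots-even-factor} that every \(d\)-th root of unity is a simple root of \(F_{d,m}\)). Moreover, \(T_{d,m}\) is the normalized reciprocal of \(C_{d,m}\) by Lemma~\ref{lem:normalized-reciprocal-multiplicative}, since the normalized reciprocal of \(1-x^d\) is \(1-t^d\). For the coefficients, we expand in \(\mathbb{Q}[[t]]\):
\[
\frac{1}{1-t^d}=1+t^d+O(t^{2d}),
\]
and multiply:
\[
(1-mt^d+O(t^{2d}))(1+t^d+O(t^{2d}))=1-(m-1)t^d+O(t^{2d}).
\]
Since \(m-1=m-\varepsilon(m)=2\lfloor m/2\rfloor\), this gives the claim. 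The case \(m=0\) is trivial, as \(T_{d,0}=1\).

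The only delicate point is the bookkeeping behind the normalized reciprocal. The leading coefficient of \(F_{d,m}\) is \((-1)^m\), and that of \(1-x^d\) is \(-1\). So we must check that the sign \((-1)^m\) in \(R_{d,m}\) splits as the sign for \(1-x^d\) times the leading-coefficient sign of \(C_{d,m}\). Then the quotient \(T_{d,m}\) has constant term \(1\), consistent with the expansion above. This follows directly from multiplicativity (Lemma~\ref{lem:normalized-reciprocal-multiplicative}). Beyond this the argument is routine.
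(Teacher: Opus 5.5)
Your proposal is correct and follows the paper's proof essentially verbatim. You split by the parity of \(m\), note \(T_{d,m}=R_{d,m}\) when \(m\) is even, and for \(m\) odd multiply \(1-mt^d+O(t^{2d})\) by \(1/(1-t^d)=1+t^d+O(t^{2d})\); your extra remarks on polynomiality and sign bookkeeping are accurate but unnecessary, and citing Lemma~\ref{lem:even-reciprocal-expansion} instead of Lemma~\ref{lem:even-reciprocal-first-terms} suffices.
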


\begin{proof}
If \(m\) is even, then \(\varepsilon(m)=0\), so
\[
T_{d,m}(t)=R_{d,m}(t),
\]
and the result follows from
Lemma~\ref{lem:even-reciprocal-first-terms}.

If \(m\) is odd, then
\[
T_{d,m}(t)
=
\frac{R_{d,m}(t)}{1-t^d}.
\]
Since
\[
\frac{1}{1-t^d}
=
1+t^d+O(t^{2d}),
\]
we obtain
\[
\begin{aligned}
T_{d,m}(t)
&=
\left(
1-mt^d+O(t^{2d})
\right)
\left(
1+t^d+O(t^{2d})
\right)\\
&=
1-(m-1)t^d+O(t^{2d}).
\end{aligned}
\]
Since \(\varepsilon(m)=1\), this is
\[
1-\bigl(m-\varepsilon(m)\bigr)t^d+O(t^{2d}). \qedhere
\]
\end{proof}

\begin{proposition}
\label{prop:core-reciprocal-product}
Let \(\lambda\) and \(\mu\) be partitions satisfying
\(
C_\lambda(x)=C_\mu(x).
\)
Then
\[
\prod_{\substack{d\geq2\\d\text{ even}}}
T_{d,s_d(\lambda)}(t)
=
\prod_{\substack{d\geq2\\d\text{ even}}}
T_{d,s_d(\mu)}(t).
\]
\end{proposition}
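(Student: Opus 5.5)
The plan is to apply the normalized reciprocal operation to both sides of \(C_\lambda(x)=C_\mu(x)\), use Lemma~\ref{lem:normalized-reciprocal-multiplicative} to distribute it over the finite product defining \(C_\nu\), and identify the normalized reciprocal of each factor \(C_{d,s_d(\nu)}\) with \(T_{d,s_d(\nu)}(t)\). Since equal polynomials have equal degrees and equal leading coefficients, the normalized reciprocals \(C_\lambda^{*}\) and \(C_\mu^{*}\) coincide. It then remains to show that \(C_\nu^{*}=\prod_{d\text{ even}}T_{d,s_d(\nu)}\) for every partition \(\nu\).

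The key step is to verify that \(T_{d,m}\) really is the normalized reciprocal \(C_{d,m}^{*}\). By Definition~\ref{def:hermite-core}, we have \(F_{d,m}=(1-x^d)^{\varepsilon(m)}C_{d,m}\). Lemma~\ref{lem:normalized-reciprocal-multiplicative} then gives \(F_{d,m}^{*}=\bigl((1-x^d)^{*}\bigr)^{\varepsilon(m)}C_{d,m}^{*}\). Next I will check directly that \((1-x^d)^{*}(t)=t^d(1-t^{-d})/(-1)=1-t^d\). I will also confirm that \(F_{d,m}^{*}=R_{d,m}\): the leading coefficient of \(F_{d,m}\) is \((-1)^m\) and its degree is \(dm\), so dividing by \((-1)^m\) is the same as multiplying by it. For \(m=0\) both sides equal \(1\). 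Dividing by \((1-t^d)^{\varepsilon(m)}\), which is a nonzero polynomial, then yields \(C_{d,m}^{*}=T_{d,m}\). In particular, \(T_{d,m}\) is a polynomial and not merely a power series.

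Finally, I will take the finite product over even \(d\). Factors with \(s_d(\nu)=0\) are the constant \(1\) and contribute \(T_{d,0}=1\), so the infinite product notation is harmless. Applying Lemma~\ref{lem:normalized-reciprocal-multiplicative} iteratively gives \(C_\nu^{*}=\prod T_{d,s_d(\nu)}\), and equating the two sides for \(\nu=\lambda\) and \(\nu=\mu\) finishes the proof.

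I expect no serious obstacle. The only points needing care are bookkeeping ones: the normalized reciprocal must be well defined, i.e.\ applied to nonzero polynomials, which holds because each \(C_{d,m}\) divides a nonzero polynomial; and the sign conventions, namely the leading coefficient \((-1)^m\) of \(F_{d,m}\) and \(-1\) of \(1-x^d\), must combine correctly so that \(T_{d,m}\) has constant term \(1\).
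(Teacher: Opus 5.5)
Your proposal is correct and follows the same route as the paper: apply the normalized reciprocal to $C_\lambda=C_\mu$, use multiplicativity (Lemma~\ref{lem:normalized-reciprocal-multiplicative}), and identify $C_{d,m}^{*}$ with $T_{d,m}$. The paper only asserts this identification when it defines $T_{d,m}$, whereas you check it explicitly via $(1-x^d)^{*}=1-t^d$ and $F_{d,m}^{*}=R_{d,m}$, which is a welcome bit of extra care.
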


\begin{proof}
Normalized reciprocation is multiplicative for monic polynomials.
Therefore, taking normalized reciprocals of
\[
C_\lambda(x)=C_\mu(x)
\]
gives the stated identity.
\end{proof}

\subsection{Logarithmic Linearization}
\label{subsec:logarithmic-linearization}

Every polynomial \(T_{d,m}(t)\) has constant term \(1\). Therefore its
formal logarithm is well defined in \(\mathbb Q[[t]]\).

\begin{definition}
If
\(
A(t)\in 1+t\mathbb Q[[t]],
\)
define
\[
\log A(t)
=
\sum_{r\geq1}
\frac{(-1)^{r+1}}{r}
\bigl(A(t)-1\bigr)^r.
\]
\end{definition}

\begin{lemma}
\label{lem:formal-log-product}
Let
\(
A_1(t),\ldots,A_r(t),
B_1(t),\ldots,B_s(t)
\in 1+t\mathbb Q[[t]].
\)
If
\[
\prod_{i=1}^r A_i(t)
=
\prod_{j=1}^s B_j(t),
\]
then
\[
\sum_{i=1}^r\log A_i(t)
=
\sum_{j=1}^s\log B_j(t).
\]
\end{lemma}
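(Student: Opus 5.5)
The plan is to reduce the statement to the fundamental homomorphism property of the formal logarithm,
\[
\log\bigl(A(t)B(t)\bigr)=\log A(t)+\log B(t)\qquad\text{for }A,B\in 1+t\mathbb Q[[t]],
\]
and then induct on the number of factors. First I will check that everything is well defined. Since $A(t)-1\in t\mathbb Q[[t]]$, the power $(A(t)-1)^r$ lies in $t^r\mathbb Q[[t]]$. Hence each coefficient of $\log A(t)$ is a finite sum, and $\log A(t)\in t\mathbb Q[[t]]$. The set $1+t\mathbb Q[[t]]$ is closed under products, so both sides of the hypothesis and all partial products stay in the domain of $\log$.

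For the additivity step I will use the formal derivative rather than expanding the series directly. For $A\in 1+t\mathbb Q[[t]]$, the series $A$ is invertible in $\mathbb Q[[t]]$. Differentiating the defining series term by term (legitimate coefficientwise, since only finitely many terms affect each coefficient) gives
\[
\frac{d}{dt}\log A=A'\sum_{r\ge1}(-1)^{r+1}(A-1)^{r-1}=A'\cdot\frac{1}{1+(A-1)}=\frac{A'}{A}.
\]
This uses the geometric series identity $\sum_{k\ge0}(-u)^k=(1+u)^{-1}$ for $u\in t\mathbb Q[[t]]$. It follows that
\[
\frac{d}{dt}\bigl(\log(AB)-\log A-\log B\bigr)=\frac{(AB)'}{AB}-\frac{A'}{A}-\frac{B'}{B}=0.
\]
Over $\mathbb Q$, a power series with zero derivative is constant, because the coefficient of $t^{n}$ in the derivative is $(n+1)$ times the coefficient of $t^{n+1}$. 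The difference also has zero constant term, since each logarithm lies in $t\mathbb Q[[t]]$. So the difference vanishes, which proves additivity.

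Finally, induction on $r$ gives $\log\prod_{i=1}^rA_i=\sum_{i=1}^r\log A_i$, and similarly for the $B_j$. Applying $\log$ to the two equal series $\prod A_i=\prod B_j$ then yields the claim.

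The main obstacle is the derivative identity $(\log A)'=A'/A$, specifically justifying term-by-term differentiation and the chain rule $\bigl((A-1)^r\bigr)'=r(A-1)^{r-1}A'$ in the formal setting. Both follow from the finiteness of each coefficient computation and the ordinary product rule for formal power series, so I expect this step to be routine once it is stated carefully. Division by $r$ and by $n+1$ is harmless because we work over $\mathbb Q$.
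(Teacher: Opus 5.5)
Your proposal is correct and follows the same route as the paper: reduce to the additivity $\log(AB)=\log A+\log B$ on $1+t\mathbb Q[[t]]$ and apply it repeatedly to both products. The only difference is that the paper simply asserts additivity, whereas you prove it using the logarithmic derivative $(\log A)'=A'/A$ and the vanishing of constant terms; this argument is valid and complete over $\mathbb Q$.
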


\begin{proof}
The formal logarithm satisfies
\[
\log(AB)=\log A+\log B
\]
for power series \(A,B\in 1+t\mathbb Q[[t]]\). Applying this identity
repeatedly to the two products proves the result.
\end{proof}

\begin{lemma}
\label{lem:logarithmic-divisor-support}
For every positive even integer \(d\) and every \(m\geq0\),
\[
\log T_{d,m}(t)
\in
t^d\mathbb Q[[t^d]].
\]
Consequently, in an identity
\[
\sum_{\substack{d\geq2\\d\text{ even}}}
\log T_{d,a_d}(t)
=
\sum_{\substack{d\geq2\\d\text{ even}}}
\log T_{d,b_d}(t),
\]
the coefficient of \(t^N\) receives contributions only from those even
integers \(d\) satisfying
\[
d\mid N.
\]
\end{lemma}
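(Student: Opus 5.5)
The key observation is that \(T_{d,m}(t)\) is a power series in the single variable \(u=t^d\) with constant term \(1\). The first claim should follow by working entirely inside \(\mathbb Q[[u]]\) and then substituting \(u=t^d\).

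First I would check that \(R_{d,m}(t)\) lies in \(\mathbb Q[t^d]\). From the expansion in the proof of Lemma~\ref{lem:even-reciprocal-expansion},
\[
R_{d,m}(t)=(-1)^m\sum_{j=0}^{\lfloor m/2\rfloor}\binom{m}{2j}(2j-1)!!\,d^j\,t^{2jd}(t^d-1)^{m-2j}.
\]
This is a polynomial in \(u=t^d\) with rational (indeed integer) coefficients, and its constant term is \(1\). Write \(R_{d,m}(t)=r_{d,m}(t^d)\) with \(r_{d,m}(u)\in 1+u\mathbb Q[u]\). Since \((1-u)^{-1}=\sum_{k\ge0}u^k\in 1+u\mathbb Q[[u]]\), we get
\[
T_{d,m}(t)=\tau_{d,m}(t^d),\qquad \tau_{d,m}(u)=r_{d,m}(u)(1-u)^{-\varepsilon(m)}\in 1+u\mathbb Q[[u]].
\]
(In fact \(T_{d,m}\) is a polynomial, being the normalized reciprocal of \(C_{d,m}\), but only the power-series statement is needed.)

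Next I would show that substitution commutes with the formal logarithm. Define \(\sigma:\mathbb Q[[u]]\to\mathbb Q[[t]]\) by \(u\mapsto t^d\). This is a ring homomorphism, and it is continuous for the adic topologies because it sends \(u^k\mathbb Q[[u]]\) into \(t^{dk}\mathbb Q[[t]]\). The series \(\log A=\sum_{r\ge1}\frac{(-1)^{r+1}}{r}(A-1)^r\) converges \(u\)-adically. Its \(r\)-th term lies in \(u^r\mathbb Q[[u]]\), so each coefficient is a finite sum and \(\sigma\) may be applied termwise. This gives
\[
\log T_{d,m}(t)=\sigma\bigl(\log\tau_{d,m}(u)\bigr).
\]
Since \(\tau_{d,m}-1\in u\mathbb Q[[u]]\), each \((\tau_{d,m}-1)^r\) lies in \(u\mathbb Q[[u]]\), so \(\log\tau_{d,m}\in u\mathbb Q[[u]]\). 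Applying \(\sigma\) yields \(\log T_{d,m}(t)\in t^d\mathbb Q[[t^d]]\), as claimed.

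For the consequence, I would expand each summand on either side of the identity. By the first part, \(\log T_{d,a_d}(t)\) and \(\log T_{d,b_d}(t)\) only have nonzero coefficients at exponents \(N=dk\) with \(k\ge1\). So for fixed \(N\), a summand indexed by \(d\) can contribute to the coefficient of \(t^N\) only when \(d\mid N\). Both sums are finite, since \(T_{d,0}=1\) has logarithm \(0\) and only finitely many \(a_d,b_d\) are nonzero, so comparing coefficients termwise is legitimate.

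There is no real obstacle here. The only step needing care is justifying that the substitution \(u\mapsto t^d\) commutes with the infinite series defining \(\log\); the \(u\)-adic convergence argument above handles this.
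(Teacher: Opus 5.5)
Your proposal is correct and follows essentially the same argument as the paper: $R_{d,m}$ is a polynomial in $t^d$ with constant term $1$, dividing by $(1-t^d)^{\varepsilon(m)}$ keeps $T_{d,m}$ in $1+t^d\mathbb{Q}[[t^d]]$, so its formal logarithm lies in $t^d\mathbb{Q}[[t^d]]$, and the divisibility statement follows immediately. You are slightly more careful than the paper, which loosely calls $(1-t^d)^{-\varepsilon(m)}$ a polynomial in $t^d$ and does not spell out why the substitution $u\mapsto t^d$ commutes with the logarithm series.
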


\begin{proof}
By construction, \(R_{d,m}(t)\) is a polynomial in \(t^d\), and so is
\[
(1-t^d)^{-\varepsilon(m)}.
\]
Hence
\[
T_{d,m}(t)\in 1+t^d\mathbb Q[t^d].
\]
Its formal logarithm therefore belongs to
\[
t^d\mathbb Q[[t^d]].
\]

A term of degree \(N\) can occur in
\(\log T_{d,m}(t)\) only if \(N\) is a positive multiple of \(d\).
Therefore only part sizes \(d\mid N\) contribute to the coefficient of
\(t^N\).
\end{proof}

\begin{corollary}
\label{cor:logarithmic-core-identity}
If
\(
C_\lambda(x)=C_\mu(x),
\)
then
\[
\sum_{\substack{d\geq2\\d\text{ even}}}
\log T_{d,s_d(\lambda)}(t)
=
\sum_{\substack{d\geq2\\d\text{ even}}}
\log T_{d,s_d(\mu)}(t).
\]
Moreover, the coefficient of \(t^N\) in this identity involves only the
even part sizes dividing \(N\).
\end{corollary}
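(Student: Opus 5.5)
This follows by chaining Proposition~\ref{prop:core-reciprocal-product}, Lemma~\ref{lem:formal-log-product} and Lemma~\ref{lem:logarithmic-divisor-support}; no new computation is needed.

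\emph{The identity.} Start from the hypothesis \(C_\lambda(x)=C_\mu(x)\). Proposition~\ref{prop:core-reciprocal-product} turns this into the equality of finite products
\[
\prod_{\substack{d\geq2\\ d\text{ even}}} T_{d,s_d(\lambda)}(t)
=
\prod_{\substack{d\geq2\\ d\text{ even}}} T_{d,s_d(\mu)}(t).
\]
Only finitely many factors differ from \(1\), since \(s_d=0\) gives \(T_{d,0}=1\). Each factor \(T_{d,m}(t)=R_{d,m}(t)(1-t^d)^{-\varepsilon(m)}\) is a power series with constant term \(1\), because \(R_{d,m}(0)=1\). So every factor lies in \(1+t\mathbb Q[[t]]\). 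Rationality holds because the Hermite coefficients are integers. Lemma~\ref{lem:formal-log-product} then converts the product identity into the stated sum of formal logarithms.

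One point needs checking: the factors \(T_{d,m}\) must genuinely lie in \(\mathbb Q[[t]]\), and the reciprocal of \(C_{d,m}\) must be correctly normalized. Here \(C_{d,m}\) need not be monic; its leading coefficient is \(\pm1\). I would therefore note that normalized reciprocation, as defined, divides by the leading coefficient. By Lemma~\ref{lem:normalized-reciprocal-multiplicative} it is multiplicative for all nonzero polynomials, so no monicity is needed.

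\emph{The divisibility statement.} Lemma~\ref{lem:logarithmic-divisor-support} gives \(\log T_{d,m}(t)\in t^d\mathbb Q[[t^d]]\). Hence, for a fixed \(N\), the coefficient of \(t^N\) on either side is a sum over the even \(d\) with \(d\mid N\) only. This is a finite sum, so no convergence issues arise.
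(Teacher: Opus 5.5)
Your proposal is correct and follows the chain the paper intends: Proposition~\ref{prop:core-reciprocal-product}, then Lemma~\ref{lem:formal-log-product}, then Lemma~\ref{lem:logarithmic-divisor-support} for the divisor-support statement. One small slip does not affect the argument: \(C_{d,m}\) is in fact always monic, since its leading coefficient is \((-1)^m/(-1)^{\varepsilon(m)}=(-1)^{m-\varepsilon(m)}=1\) because \(m-\varepsilon(m)\) is even, so the paper's appeal to multiplicativity for monic polynomials is also legitimate.
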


\subsection{Root Factorization of the Reciprocal Cores}
\label{subsec:root-factorization}

We now express the reciprocal core polynomials in terms of the ordinary
Hermite roots.

\begin{proposition}
\label{prop:reciprocal-root-factorization}
Let \(\rho\) range over the roots of the ordinary Hermite polynomial
\(\He_m\). Set
\[
u=t^d.
\]
Then
\[
R_{d,m}(u)
=
\prod_{\He_m(\rho)=0}
\left(
1-\left(1-i\sqrt d\,\rho\right)u
\right).
\]

After removing the factor corresponding to \(\rho=0\) when \(m\) is
odd, we have
\[
T_{d,m}(u)
=
\prod_{\substack{\He_m(\rho)=0\\ \rho\neq0}}
\left(
1-\left(1-i\sqrt d\,\rho\right)u
\right).
\]
\end{proposition}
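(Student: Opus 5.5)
The plan is to pass to root factorizations, using the imaginary rescaling formula (Theorem~\ref{thm:imaginary-rescaling}) together with the fact that \(\He_m\) is monic with \(m\) real simple roots. Throughout, \(R_{d,m}(t)\) is a polynomial in \(t^d\), as noted in Lemma~\ref{lem:logarithmic-divisor-support}. So the statement should be read as an identity of polynomials in the variable \(u=t^d\).

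First I would factor the generalized Hermite polynomial. Since \(\He_m(x)=\prod_{\He_m(\rho)=0}(x-\rho)\), with the product over the \(m\) simple real roots, Theorem~\ref{thm:imaginary-rescaling} gives
\[
\He^{[-d]}_m(y)=(i\sqrt d)^m\prod_{\rho}\Bigl(\frac{y}{i\sqrt d}-\rho\Bigr)=\prod_{\rho}\bigl(y-i\sqrt d\,\rho\bigr).
\]
This is an identity of polynomials with complex coefficients. Theorem~\ref{thm:imaginary-rescaling} is stated for real \(v>0\), and \(d>0\), so it applies. Substituting \(y=1-x^d\) yields \(F_{d,m}(x)=\prod_{\rho}(1-x^d-i\sqrt d\,\rho)\).

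Next I would compute the normalized reciprocal factor by factor. Distribute \(t^{dm}=\prod_\rho t^d\) over the \(m\) factors. Each factor becomes
\[
t^d\bigl(1-t^{-d}-i\sqrt d\,\rho\bigr)=u-1-i\sqrt d\,\rho\,u=-\bigl(1-(1-i\sqrt d\,\rho)u\bigr).
\]
The \(m\) signs produce \((-1)^m\), which cancels the prefactor \((-1)^m\) in the definition of \(R_{d,m}\). This gives the first formula.

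For the second formula, the key fact is that \(0\) is a root of \(\He_m\) if and only if \(m\) is odd, and that it is then simple (as used in Lemma~\ref{lem:unit-circle-roots-even-factor}). The factor attached to \(\rho=0\) is exactly \(1-u=1-t^d\). So \((1-t^d)^{\varepsilon(m)}\) divides the product exactly once when \(m\) is odd, and does not appear when \(m\) is even. Dividing by it, as in the definition of \(T_{d,m}\), removes precisely that factor and leaves the product over the nonzero roots.

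The main point needing care is bookkeeping rather than any real obstacle: signs, and the convention that \(R_{d,m}(u)\) denotes the polynomial in \(u=t^d\). For the second formula one must also check that the factor \(1-u\) appears with multiplicity exactly \(\varepsilon(m)\). This relies on the simplicity of Hermite roots (Corollary~\ref{cor:generalized-hermite-roots}). The symmetry \(\rho\mapsto-\rho\) of the root set means that the sign choice inside \(1\mp i\sqrt d\,\rho\) is immaterial, which is reassuring but not needed.
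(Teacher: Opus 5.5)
Your proposal is correct and follows essentially the same route as the paper: factor \(\He^{[-d]}_m(y)=\prod_{\rho}(y-i\sqrt d\,\rho)\) via the imaginary rescaling formula, substitute \(y=1-x^d\), and pass to the normalized reciprocal, where your factor-by-factor sign bookkeeping (the \(m\) signs cancel the prefactor \((-1)^m\)) is exactly right. Then, for odd \(m\), the simple root \(\rho=0\) contributes precisely the factor \(1-u\) that is divided out in the definition of \(T_{d,m}\); the only cosmetic difference is that you derive the factorization directly from \(\He_m(x)=\prod_\rho(x-\rho)\), whereas the paper reads it off from the root description together with monicity of \(\He^{[-d]}_m\).
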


\begin{proof}
By Theorem~\ref{thm:imaginary-rescaling}, the roots of
\(\He_m^{[-d]}(y)\) are
\(
i\sqrt d\,\rho,
\)
where \(\rho\) ranges over the roots of \(\He_m\). Since
\(\He_m^{[-d]}\) is monic,
\[
\He_m^{[-d]}(y)
=
\prod_{\He_m(\rho)=0}
\left(y-i\sqrt d\,\rho\right).
\]
Therefore
\[
F_{d,m}(x)
=
\prod_{\He_m(\rho)=0}
\left(
1-x^d-i\sqrt d\,\rho
\right).
\]

Substituting \(x=t^{-1}\) and multiplying by
\((-1)^mt^{dm}\), we obtain
\[
R_{d,m}(t)
=
\prod_{\He_m(\rho)=0}
\left(
1-\left(1-i\sqrt d\,\rho\right)t^d
\right).
\]
Writing \(u=t^d\) gives the first identity.

If \(m\) is odd, the root \(\rho=0\) contributes the factor
\[
1-u.
\]
Dividing by \(1-u\) removes precisely this factor. If \(m\) is even, there is no zero root. This proves the factorization of \(T_{d,m}\).
\end{proof}

For \(q\geq0\), define
\[
A_{d,q}(u)
=
T_{d,2q+1}(u)
\]
and
\[
B_{d,q}(u)
=
T_{d,2q}(u).
\]
Both polynomials have degree \(2q\) in \(u\).

Let
\[
0<\rho_{m,1}<\cdots<\rho_{m,\lfloor m/2\rfloor}
\]
denote the positive roots of \(\He_m\). Then
\[
A_{d,q}(u)
=
\prod_{j=1}^{q}
(1-\alpha_j u)(1-\overline{\alpha_j}u),
\]
where
\[
\alpha_j
=
1-i\sqrt d\,\rho_{2q+1,j},
\]
and
\[
B_{d,q}(u)
=
\prod_{j=1}^{q}
(1-\beta_j u)(1-\overline{\beta_j}u),
\]
where
\[
\beta_j
=
1-i\sqrt d\,\rho_{2q,j}.
\]


\begin{corollary}
\label{cor:logarithmic-power-sums}
Write
\[
\log\frac{A_{d,q}(u)}{B_{d,q}(u)}
=
\sum_{n\geq1}c_n(d,q)u^n.
\]
Then
\[
c_n(d,q)
=
-\frac{S_n(d,q)}{n},
\]
where
\[
S_n(d,q)
=
\sum_{j=1}^{q}
\left(
\alpha_j^n+\overline{\alpha_j}^{\,n}
\right)
-
\sum_{j=1}^{q}
\left(
\beta_j^n+\overline{\beta_j}^{\,n}
\right).
\]
In particular, \(S_n(d,q)\) is a linear recurrence sequence over a
field of characteristic zero.
\end{corollary}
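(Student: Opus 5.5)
The plan is to compute the formal logarithm of a finite product of linear factors directly, and then to observe that the resulting coefficient sequence is a finite exponential sum.

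First, by Proposition~\ref{prop:reciprocal-root-factorization} and the definitions of \(A_{d,q}\) and \(B_{d,q}\), both polynomials are finite products of factors \(1-\gamma u\), each with constant term \(1\). The numerator uses \(\gamma\in\{\alpha_j,\overline{\alpha_j}\}\) and the denominator uses \(\gamma\in\{\beta_j,\overline{\beta_j}\}\). The formal logarithm is taken here in \(\mathbb C[[u]]\); its coefficients turn out to be rational because \(A_{d,q},B_{d,q}\in\mathbb Q[u]\).

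Second, I would use the additivity of the logarithm from Lemma~\ref{lem:formal-log-product}, extended to quotients via \(\log(A/B)=\log A-\log B\). The latter holds because \(\log B+\log(A/B)=\log A\). This gives
\[
\log\frac{A_{d,q}(u)}{B_{d,q}(u)}=\sum_{j=1}^q\bigl(\log(1-\alpha_ju)+\log(1-\overline{\alpha_j}u)\bigr)-\sum_{j=1}^q\bigl(\log(1-\beta_ju)+\log(1-\overline{\beta_j}u)\bigr).
\]

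Third, I would expand each term with the elementary series
\[
\log(1-\gamma u)=-\sum_{n\ge1}\frac{\gamma^n}{n}u^n.
\]
This series follows from the definition of \(\log\): it is the standard identity \(\sum_{r\ge1}(-1)^{r+1}(-\gamma u)^r/r=-\sum_{n\ge1}\gamma^nu^n/n\). Collecting the coefficient of \(u^n\) then gives \(c_n(d,q)=-S_n(d,q)/n\) exactly.

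Finally, \(S_n(d,q)\) is a finite \(\mathbb C\)-linear combination of \(n\)-th powers of the fixed numbers \(\alpha_j,\overline{\alpha_j},\beta_j,\overline{\beta_j}\). Hence it satisfies the linear recurrence whose characteristic polynomial is \(\prod_\gamma(X-\gamma)\), taken over the distinct values \(\gamma\) occurring. Since this polynomial divides \(A^{\mathrm{rev}}_{d,q}(X)B^{\mathrm{rev}}_{d,q}(X)\), which has rational coefficients, the recurrence can be taken with coefficients in \(\mathbb Q\). It is therefore a linear recurrence over a field of characteristic zero.

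No step here is a genuine obstacle. The only point needing care is justifying the formal log identities over \(\mathbb C[[u]]\) rather than \(\mathbb Q[[t]]\): the paper defined \(\log\) over \(\mathbb Q\), but the same definition and additivity hold verbatim over \(\mathbb C\).
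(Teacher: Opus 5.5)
Your proof is correct and follows essentially the same route as the paper: factor $A_{d,q}$ and $B_{d,q}$ into linear factors $1-\gamma u$ via the root factorization, expand $\log(1-\gamma u)=-\sum_{n\ge1}\gamma^n u^n/n$, compare coefficients of $u^n$, and note that $S_n(d,q)$ is a finite exponential sum. Your remarks about working in $\mathbb{C}[[u]]$ and about $\log(A/B)=\log A-\log B$ make precise what the paper uses tacitly. One small quibble concerns your optional refinement to $\mathbb{Q}$. Being a divisor of a rational polynomial does not by itself make $\prod_\gamma(X-\gamma)$ rational. The conclusion still holds, because the product over the distinct roots of $A^{\mathrm{rev}}_{d,q}B^{\mathrm{rev}}_{d,q}$ is its squarefree part, which is rational. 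Alternatively, you can take $A^{\mathrm{rev}}_{d,q}(X)B^{\mathrm{rev}}_{d,q}(X)$ itself as the characteristic polynomial. In any case, the statement only requires a field of characteristic zero such as $\mathbb{C}$.
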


\begin{proof}
Since \(A_{d,q}(0)=B_{d,q}(0)=1\), the formal logarithm of
\(A_{d,q}(u)/B_{d,q}(u)\) is well defined. By the root factorizations,
\[
A_{d,q}(u)
=
\prod_{j=1}^{q}
(1-\alpha_j u)(1-\overline{\alpha_j}u),
\]
and
\[
B_{d,q}(u)
=
\prod_{j=1}^{q}
(1-\beta_j u)(1-\overline{\beta_j}u).
\]
For any complex number \(\gamma\),
\[
\log(1-\gamma u)
=
-\sum_{n\geq1}\frac{\gamma^n}{n}u^n.
\]
Therefore
\[
\log\frac{A_{d,q}(u)}{B_{d,q}(u)}
=
-\sum_{n\geq1}
\frac{1}{n}
\left[
\sum_{j=1}^{q}
\left(
\alpha_j^n+\overline{\alpha_j}^{\,n}
\right)
-
\sum_{j=1}^{q}
\left(
\beta_j^n+\overline{\beta_j}^{\,n}
\right)
\right]u^n.
\]
Comparing coefficients of \(u^n\) gives
\[
c_n(d,q)=-\frac{S_n(d,q)}{n}.
\]

Finally, \(S_n(d,q)\) is a finite linear combination of exponential sequences of the form \(\gamma^n\). Hence it satisfies a constant-coefficient linear recurrence over the field generated by the numbers
\[
\alpha_j,\overline{\alpha_j},\beta_j,\overline{\beta_j},
\qquad 1\leq j\leq q,
\]
which has characteristic zero.
\end{proof}


\subsection{External Results on Linear Recurrences and Primes}
\label{subsec:external-results}

We record two classical theorems that will be used in the proof of the prime-coefficient lemma in the next section.

\begin{theorem}[Skolem--Mahler--Lech]\cite[p.25]{Skolem-Mahler-Lech}
\label{thm:skolem-mahler-lech}
Let \((u_n)_{n\geq0}\) be a linear recurrence sequence over a field of
characteristic zero. Then the zero set
\[
\{n\geq0:u_n=0\}
\]
is the union of a finite set and finitely many arithmetic progressions.
\end{theorem}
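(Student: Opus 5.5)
The plan is the classical \(p\)-adic argument of Skolem, Mahler and Lech: embed the recurrence in \(\mathbb Z_p\), make it periodic modulo \(p\), and interpolate it along each residue class by a \(p\)-adic analytic function.

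\textbf{Step 1: matrix form and reduction to \(A\) invertible.} Since \((u_n)\) satisfies a linear recurrence of some order \(k\) over a characteristic-zero field, I will use the companion matrix to write
\[
u_n=c^{T}A^{n}v,\qquad A\in\mathrm{GL}\text{ or }M_k(K),
\]
with \(c,v\in K^k\). Here \(K\) is the finitely generated field obtained by adjoining to \(\mathbb Q\) the recurrence coefficients and the initial values. If the last recurrence coefficient vanishes, then \(A\) is singular. In that case I decompose \(K^k\) into the generalized \(0\)-eigenspace and a complementary \(A\)-stable subspace. On the nilpotent part, \(A^n\) vanishes for \(n\ge k\). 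So for \(n\ge k\) the sequence equals one given by an invertible matrix, and the finitely many indices \(n<k\) are absorbed into the finite exceptional set of the statement. Henceforth \(A\) is invertible.

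\textbf{Step 2: embedding into \(\mathbb Z_p\).} I will invoke the Cassels--Lech embedding lemma. For a finitely generated field \(K\) of characteristic zero and a finite subset \(S\subset K\), there are infinitely many primes \(p\) and field embeddings \(K\hookrightarrow\mathbb Q_p\) sending every element of \(S\) into \(\mathbb Z_p\). I take \(S\) to consist of the entries of \(A\), of \(A^{-1}\), of \(c\) and of \(v\), and I choose such a prime \(p\) with \(p\ge 3\). Then \(A\bmod p\) lies in the finite group \(\mathrm{GL}_k(\mathbb F_p)\), so some \(M\ge1\) satisfies
\[
A^{M}=I+pB,\qquad B\in M_k(\mathbb Z_p).
\]

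\textbf{Step 3: \(p\)-adic interpolation and conclusion.} Fix a residue \(r\in\{0,\dots,M-1\}\) and define \(f_r(x)=c^{T}A^{r}(I+pB)^{x}v\) for \(x\in\mathbb Z_p\). Expanding
\[
(I+pB)^{x}=\sum_{j\ge0}\binom{x}{j}p^{j}B^{j},
\]
or equivalently \(\exp(x\log(I+pB))\), and using \(p\ge3\), the coefficient of \(x^{j}\) has valuation at least roughly \(j(1-\tfrac1{p-1})\), which tends to infinity. Hence \(f_r\) is given by a power series in \(x\) converging on all of \(\mathbb Z_p\), and it satisfies \(f_r(m)=u_{r+Mm}\) for every integer \(m\ge0\). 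Strassmann's theorem says a nonzero power series converging on \(\mathbb Z_p\) has only finitely many zeros there. So either \(f_r\equiv0\), in which case the whole progression \(r+M\mathbb N\) lies in the zero set, or only finitely many \(m\) give \(u_{r+Mm}=0\). Taking the union over the \(M\) residues, together with the finite set from Step 1, shows that the zero set is a finite set together with finitely many arithmetic progressions.

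\textbf{Main obstacle.} The hardest ingredient is the embedding lemma in Step 2, which makes all the data \(p\)-adically integral at once. My first approach is to write \(K=\mathbb Q(t_1,\dots,t_s)(\theta)\) with \(\theta\) algebraic over \(\mathbb Q(t_1,\dots,t_s)\). I would then specialize the transcendentals \(t_i\) to suitable rational integers so that all the finitely many relevant denominators and discriminants stay nonzero. This reduces the problem to a number field, where one chooses \(p\) splitting completely and coprime to those finitely many denominators, so that \(p\)-adic roots exist by Hensel's lemma. Finally, the images of the \(t_i\) are perturbed to algebraically independent \(p\)-adic integers lying close to the specialized values. The convergence estimate in Step 3 is routine by comparison, being a standard valuation bound on \(p^{j}/j!\).
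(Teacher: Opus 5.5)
The paper gives no proof of this statement. It quotes the theorem from the literature (\cite{Skolem-Mahler-Lech}) as a black box and uses it only once, in Theorem~\ref{thm:prime-coefficient-nonvanishing}. Your argument is the standard $p$-adic proof, and it is correct:
\begin{itemize}
\item The Fitting-decomposition reduction to invertible $A$ costs only finitely many indices.
\item Requiring integrality of the entries of both $A$ and $A^{-1}$ puts $A \bmod p$ in $\mathrm{GL}_k(\mathbb{F}_p)$, which gives $A^M=I+pB$.
\item The restriction $p\ge 3$ is exactly what makes $v_p(p^j/j!)\to\infty$. Each $f_r$ is then a genuine power series whose coefficients tend to $0$. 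For $p=2$ you would instead need $A^M\equiv I \pmod 4$.
\item Strassmann's theorem then gives the all-or-finitely-many dichotomy on each residue class.
\end{itemize}
Your sketch of the embedding lemma is Cassels's argument: a primitive element over a purely transcendental base, an integer specialization avoiding finitely many bad loci, a prime at which the specialized minimal polynomial has a simple root, Hensel, and then perturbation to algebraically independent elements of $\mathbb{Z}_p$ in the same residue classes.

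Compared with the paper, your route makes the result self-contained in full generality, at the price of that embedding theorem; the citation buys brevity. Note that the paper only ever applies the theorem to $S_n(d,q)$. This is a power sum with coefficients $\pm1$, whose roots $\alpha_j,\overline{\alpha_j},\beta_j,\overline{\beta_j}$ are nonzero algebraic integers in a number field. In that setting your Step~1 is unnecessary, and your ``main obstacle'' disappears. You can work in the completion of that number field at a prime lying over an odd, unramified rational prime that does not divide the norms of these roots. Your Step~3 then runs verbatim, since Strassmann's theorem holds over any complete non-archimedean field.
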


\begin{remark}
After choosing a common modulus \(M\) for the finitely many arithmetic progressions in Theorem~\ref{thm:skolem-mahler-lech}, membership in the zero set is, outside a finite exceptional set, determined by the residue
class modulo \(M\).
\end{remark}

\begin{theorem}[Dirichlet's theorem on primes in arithmetic progressions]\cite[Theorem 7.9]{Apostol-NT}
\label{thm:dirichlet-primes}
Let \(a\) and \(M\) be positive integers satisfying
\(
\gcd(a,M)=1.
\)
Then the arithmetic progression
\[
a,\ a+M,\ a+2M,\ldots
\]
contains infinitely many prime numbers.
\end{theorem}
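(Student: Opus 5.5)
The plan is the classical analytic argument through Dirichlet characters and $L$-functions; nothing earlier in the paper is needed. Fix $M$ and $a$ with $\gcd(a,M)=1$, and let $\chi$ range over the $\varphi(M)$ Dirichlet characters modulo $M$, that is, the characters of $(\mathbb Z/M\mathbb Z)^\times$ extended by zero. Orthogonality of characters gives
\[
\frac{1}{\varphi(M)}\sum_{\chi}\overline{\chi(a)}\chi(n)=\begin{cases}1,&n\equiv a\pmod M,\\0,&\text{otherwise}.\end{cases}
\]
This turns the sum $\sum_{p\equiv a\,(M)}p^{-s}$ into a linear combination of $\sum_p\chi(p)p^{-s}$. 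It then suffices to show that, as $s\to1^+$, the sum $\sum_{p\equiv a\,(M)}p^{-s}$ tends to $+\infty$. Since a finite set of primes contributes a bounded amount, this divergence forces infinitely many primes in the progression $a, a+M, a+2M,\ldots$.

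The key steps, in order, are as follows.

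First, define $L(s,\chi)=\sum_{n\ge1}\chi(n)n^{-s}$ for $s>1$ and prove the Euler product $L(s,\chi)=\prod_p(1-\chi(p)p^{-s})^{-1}$ using complete multiplicativity of $\chi$. Taking logarithms gives $\log L(s,\chi)=\sum_p\chi(p)p^{-s}+O(1)$ uniformly for $s>1$, since the prime-power terms with exponent at least $2$ converge absolutely.

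Second, treat the principal character $\chi_0$. Here $L(s,\chi_0)$ equals $\zeta(s)$ times finitely many factors $(1-p^{-s})$ with $p\mid M$, so it diverges like $1/(s-1)$. Hence $\sum_p\chi_0(p)p^{-s}\to+\infty$.

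Third, for each nonprincipal $\chi$, note that the partial sums of $\chi(n)$ are bounded by $M$. Partial summation then shows that $L(s,\chi)$ converges for $s>0$, is continuous there, and that $L(1,\chi)$ is defined. It follows that $\log L(s,\chi)$ stays bounded as $s\to1^+$ provided $L(1,\chi)\neq0$. Granting this, the orthogonality identity gives
\[
\sum_{p\equiv a\,(M)}p^{-s}=\frac{1}{\varphi(M)}\sum_p\chi_0(p)p^{-s}+O(1)\longrightarrow\infty .
\]

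The main obstacle is the nonvanishing $L(1,\chi)\neq0$ for nonprincipal $\chi$. I would first consider the product $\prod_\chi L(s,\chi)$, which for $s>1$ is a Dirichlet series with nonnegative coefficients and constant term $1$, so the product is at least $1$.

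For a complex $\chi$ (with $\chi\ne\overline\chi$), if $L(1,\chi)=0$ then also $L(1,\overline\chi)=0$. These two zeros together beat the simple pole of $L(s,\chi_0)$, forcing the product to tend to $0$, which is a contradiction.

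For a real quadratic $\chi$ this counting argument fails, and a separate argument is needed. I would use $F(s)=\zeta(s)L(s,\chi)=\sum_n r(n)n^{-s}$ with $r(n)=\sum_{d\mid n}\chi(d)\ge0$ and $r(k^2)\ge1$. If $L(1,\chi)=0$, then $F$ would be holomorphic on $\operatorname{Re}s>0$, because the zero cancels the pole of $\zeta$. Landau's theorem on series with nonnegative coefficients then makes the series for $F$ converge for all real $s>0$. This contradicts the divergence of $\sum_n r(n)n^{-1/2}\ge\sum_k k^{-1}$.

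An alternative for the real case is the elementary hyperbola-method estimate of $\sum_{n\le x}r(n)/\sqrt n$, which yields the same contradiction without complex analysis. I would choose whichever route keeps the analytic prerequisites lightest.
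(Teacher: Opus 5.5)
The paper gives no proof of this statement; it only cites Apostol, whose argument is the elementary, real-variable proof. Your proposal is correct. It is the classical analytic proof, and it shares Apostol's skeleton: orthogonality of characters isolates the class of $a$, and everything then reduces to $L(1,\chi)\neq 0$ for nonprincipal $\chi$. Complex characters are handled by zero counting (a zero for $\chi$ forces one for $\overline{\chi}$), and real characters by a separate positivity argument. Your ``alternative'' for real $\chi$, comparing $\sum_{n\le x} r(n)/\sqrt{n}=2\sqrt{x}\,L(1,\chi)+O(1)$ with the divergence of that sum, is exactly Apostol's argument.

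The genuine difference is the limiting process. You let $s\to 1^{+}$ in $\sum_{p\equiv a}p^{-s}$, using Euler products and logarithms of $L$-functions. In the Landau variant you also need the continuation of $\zeta$ to $\operatorname{Re} s>0$ with only a simple pole at $1$, together with Landau's theorem. Apostol instead works with finite sums and proves $\sum_{p\le x,\ p\equiv a\ (\mathrm{mod}\ M)}\frac{\log p}{p}=\frac{1}{\varphi(M)}\log x+O(1)$ via M\"obius inversion and Mertens-type estimates. He disposes of complex characters by showing that the number $N$ of nonprincipal $\chi$ with $L(1,\chi)=0$ satisfies $\sum_{p\le x,\ p\equiv 1}\frac{\log p}{p}=\frac{1-N}{\varphi(M)}\log x+O(1)$. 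Positivity then forces $N\le 1$, and conjugate pairing leaves only real characters to rule out.

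Your route is shorter once basic Dirichlet-series theory is granted, and it essentially gives the Dirichlet density $1/\varphi(M)$. Apostol's avoids complex analysis entirely and yields an explicit Mertens-type asymptotic.

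One detail to tighten in your complex-character step. Continuity of $L(s,\chi)$ at $s=1$ is not enough to ``beat the pole''; you need $L(s,\chi)=O(s-1)$ as $s\to 1^{+}$. This follows because partial summation also gives locally uniform convergence of $\sum_n \chi(n)(\log n)\,n^{-s}$ for $s>0$, so $L(\cdot,\chi)$ is $C^1$ (indeed holomorphic) there.
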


The probabilists' Hermite polynomials $\{\He_n(x)\}_{n\geq 0}$ form an orthogonal polynomial sequence. We shall also use the standard strict interlacing property of the roots of consecutive Hermite polynomials.

\begin{theorem}[Strict interlacing of Hermite roots]\cite[Theorem 3.3.2]{Orthogonal-polynomials-Gabor}
\label{thm:hermite-interlacing}
The roots of every ordinary Hermite polynomial \(\He_m\) are real and simple. Moreover, the roots of \(\He_m\) and \(\He_{m+1}\) strictly interlace.

In particular, if \(\rho_m^{\max}\) denotes the largest positive root
of \(\He_m\), then
\[
\rho_{m+1}^{\max}
>
\rho_m^{\max}, \qquad m\geq 2.
\]
\end{theorem}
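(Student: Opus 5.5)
This is a classical fact and I would prove it directly from the three-term recurrence rather than from general orthogonality. From the explicit formula (or the generating function \(e^{xt-t^2/2}\)) one gets
\[
\He_{m+1}(x)=x\He_m(x)-m\He_{m-1}(x),\qquad \He_0=1,\ \He_1=x,
\]
and the derivative relation \(\He_m'(x)=m\He_{m-1}(x)\). The plan is to prove, by induction on \(m\ge1\), the joint statement: \(\He_m\) has \(m\) real simple roots, and between any two consecutive roots of \(\He_{m+1}\) there lies exactly one root of \(\He_m\), all inequalities strict.

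\textbf{Base case and inductive step.} The base case \(m=1\) is immediate: \(\He_1=x\) and \(\He_2=x^2-1\). For the inductive step, let \(y_1<\cdots<y_m\) be the real simple roots of \(\He_m\), interlaced strictly by the roots of \(\He_{m-1}\).
\begin{enumerate}
\item Interlacing together with simplicity forces \(\He_{m-1}(y_k)\) to alternate in sign in \(k\). Since both polynomials have positive leading coefficient, this gives \(\operatorname{sgn}\He_{m-1}(y_k)=(-1)^{m-k}\).
\item The recurrence gives \(\He_{m+1}(y_k)=-m\He_{m-1}(y_k)\), so \(\operatorname{sgn}\He_{m+1}(y_k)=(-1)^{m-k+1}\), again alternating and nonzero.
\item By the intermediate value theorem, \(\He_{m+1}\) has a root in each of the \(m-1\) gaps \((y_k,y_{k+1})\).
\item Since \(\He_{m+1}(y_m)<0\) and \(\He_{m+1}\to+\infty\), there is a root in \((y_m,\infty)\). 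By parity of \(\He_{m+1}\) (or the same sign argument at \(y_1\)), there is also a root in \((-\infty,y_1)\).
\end{enumerate}
This accounts for all \(m+1\) roots. They are therefore real and simple, and they strictly interlace with the \(y_k\), since no root can coincide with a \(y_k\).

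An alternative route for reality and simplicity is Szegő's standard argument. One uses orthogonality with respect to \(e^{-x^2/2}\): if \(\He_m\) changed sign at fewer than \(m\) real points \(x_1,\dots,x_r\), then \(\int \He_m(x)\prod(x-x_i)e^{-x^2/2}\,dx\) would be nonzero, contradicting orthogonality to lower degree polynomials. I would mention this but rely on the recurrence argument, which gives interlacing at the same time.

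\textbf{The ``in particular'' clause.} Interlacing places a root of \(\He_{m+1}\) strictly to the right of every root of \(\He_m\). The largest root of \(\He_{m+1}\) is therefore strictly larger than \(\rho_m^{\max}\). The hypothesis \(m\ge2\) only ensures that \(\He_m\) has a positive root at all (\(\He_1=x\) has none). By symmetry \(\He_m(-x)=(-1)^m\He_m(x)\), the largest root is positive once \(m\ge2\).

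The only step needing care is the sign bookkeeping in the first item. It is easy to get the parity off by one, and it must be stated with the leading-coefficient convention, so I would check it explicitly for \(m=2,3\) before writing the general case.
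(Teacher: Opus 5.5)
Your proof is correct, but it takes a different route from the paper. The paper gives no argument of its own; it cites Szegő's Theorem 3.3.2, the general separation theorem for polynomials orthogonal with respect to a positive measure. The standard proof there gets reality and simplicity from the orthogonality argument you mention as an alternative, and interlacing from the Christoffel--Darboux inequality \(p_{n+1}'p_n-p_n'p_{n+1}>0\).

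Your induction on the recurrence \(\He_{m+1}=x\He_m-m\He_{m-1}\) is self-contained and specific to Hermite polynomials. The negative coefficient \(-m\) does the work that positivity of the weight does in Szegő. In one pass you get reality, simplicity, strict interlacing, and the ``in particular'' clause, via the new root in \((y_m,\infty)\). That is exactly what the paper needs for \(\rho_{2q+1,q}>\rho_{2q,q}\) in Theorem~\ref{thm:prime-coefficient-nonvanishing}. Szegő's route buys generality (any orthogonal family, with zeros in the interior of the support) at the cost of the orthogonality machinery.

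Your sign bookkeeping is right as written. You have \(\operatorname{sgn}\He_{m+1}(y_m)=-1\) and \(\operatorname{sgn}\He_{m+1}(y_1)=(-1)^m\), each opposite to the sign of \(\He_{m+1}\) near \(+\infty\) and \(-\infty\) respectively. So the \(m+1\) roots land in disjoint open intervals \(z_0<y_1<z_1<\cdots<y_m<z_m\).

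Two cosmetic points. First, phrase the inductive statement as ``\(\He_m\) and \(\He_{m+1}\) have real simple roots that strictly interlace,'' since that is what the step both uses and proves. Second, since the paper defines \(\He_n\) by its explicit sum, verify the recurrence from that formula. It reduces to Pascal's rule plus \(\binom{m}{2j-1}(2j-1)!!=m\binom{m-1}{2j-2}(2j-3)!!\).
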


\section{The Case \(e_\lambda\neq e_\mu\)}
\label{sec:exceptional-case}

In this section, we complete the necessity proof in the case where the
even weights of the two partitions are unequal. Let \(\lambda,\mu\vdash n\), and suppose that
\(
P_\lambda(x)=P_\mu(x)
\)
and
\(
e_\lambda\neq e_\mu.
\)
After interchanging \(\lambda\) and \(\mu\), if necessary, we may assume
that
\(
e_\lambda>e_\mu.
\)

Our goal is to prove that
\[
\lambda\sim\mu.
\]

The proof proceeds in several stages. We first establish the prime-coefficient nonvanishing lemma. We then apply it to show that the exceptional even-part discrepancy consists of a unique part
\(4\) on the \(\lambda\)-side and a unique part \(2\) on the \(\mu\)-side. The odd-part discrepancy is supplied by two parts of size \(1\), up to Operation~\(1\). The even part transformation is precisely
Operation~\(2\).

\subsection{The Prime-Coefficient Nonvanishing Lemma}
\label{subsec:prime-coefficient-lemma}

Let \(d\) be a positive even integer and let \(q\geq 0\). Recall the
polynomials
\[
A_{d,q}(u)
=
T_{d,2q+1}(u)
\]
and
\[
B_{d,q}(u)
=
T_{d,2q}(u),
\]
where \(u=t^d\).

Write
\[
\log\frac{A_{d,q}(u)}{B_{d,q}(u)}
=
\sum_{n\geq1}c_n(d,q)u^n.
\]

We now prove the key nonvanishing result.

\begin{theorem}[Prime-coefficient nonvanishing]
\label{thm:prime-coefficient-nonvanishing}
Let \(d\) be a positive even integer and let \(q>0\). Then
\[
c_p(d,q)\neq0
\]
for infinitely many odd primes \(p\).
\end{theorem}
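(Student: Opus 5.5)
We argue by contradiction, combining Corollary~\ref{cor:logarithmic-power-sums}, the Skolem--Mahler--Lech theorem (Theorem~\ref{thm:skolem-mahler-lech}) and Dirichlet's theorem (Theorem~\ref{thm:dirichlet-primes}). The contradiction comes from the dominant root $\alpha_q=1-i\sqrt d\,\rho^{\max}_{2q+1}$.

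\textbf{Reduction to the power sums.} By Corollary~\ref{cor:logarithmic-power-sums}, $c_p(d,q)=-S_p(d,q)/p$, so it suffices to show that $S_p:=S_p(d,q)\neq 0$ for infinitely many odd primes $p$. Suppose instead that $S_p=0$ for all but finitely many primes. Since $(S_n)$ is a linear recurrence over a field of characteristic zero, Theorem~\ref{thm:skolem-mahler-lech} and the remark following it give a modulus $M\ge1$ and a finite set $F$ with the following property: for $n\notin F$, whether $S_n=0$ depends only on $n\bmod M$.

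\textbf{Spreading the zeros to a progression.} By Theorem~\ref{thm:dirichlet-primes}, the class $1\bmod M$ contains infinitely many primes, and all but finitely many of them are zeros of $S$. Hence this whole residue class lies in the zero set outside $F$. In particular
\[
S_{1+Mk}=\sum_{\gamma}\epsilon_\gamma\,\gamma\,(\gamma^M)^k=0
\]
for all sufficiently large $k$. Here $\gamma$ runs over the multiset $\{\alpha_j,\overline{\alpha_j}\}\cup\{\beta_j,\overline{\beta_j}\}$, with $\epsilon_\gamma=+1$ for the $\alpha$'s and $\epsilon_\gamma=-1$ for the $\beta$'s. Group the terms according to the distinct values $\delta=\gamma^M$. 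The exponential sequences $\delta^k$ for distinct nonzero $\delta$ are linearly independent on any tail of $\mathbb N$. Therefore, for each $\delta$, the coefficient $\sum_{\gamma^M=\delta}\epsilon_\gamma\gamma$ must vanish.

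\textbf{The dominant root.} We have $|\alpha_j|^2=1+d\rho_{2q+1,j}^2$ and $|\beta_j|^2=1+d\rho_{2q,j}^2$. By strict interlacing (Theorem~\ref{thm:hermite-interlacing}), the largest root $\rho^{\max}_{2q+1}$ of $\He_{2q+1}$ strictly exceeds every root of $\He_{2q}$; this needs $q\ge1$. The positive roots of $\He_{2q+1}$ are also distinct. Hence $\alpha_q$ and $\overline{\alpha_q}$ are the only $\gamma$ of maximal modulus. No other $\gamma$ can satisfy $\gamma^M\in\{\alpha_q^M,\overline{\alpha_q}^M\}$, since $|\gamma^M|<|\alpha_q|^M$.

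There are two cases.
\begin{itemize}
\item If $\alpha_q^M\neq\overline{\alpha_q}^M$, then the group of $\delta=\alpha_q^M$ consists of $\alpha_q$ alone. Its coefficient is $\alpha_q\neq0$.
\item If $\alpha_q^M=\overline{\alpha_q}^M$, which happens when the argument of $\alpha_q$ is a rational multiple of $\pi$, then the group is $\{\alpha_q,\overline{\alpha_q}\}$. Its coefficient is $\alpha_q+\overline{\alpha_q}=2\operatorname{Re}\alpha_q=2\neq0$, because every $\alpha_j$ has real part $1$.
\end{itemize}
Either case contradicts the vanishing of the grouped coefficients. Hence $S_p\neq0$, and so $c_p(d,q)\neq0$, for infinitely many odd primes $p$.

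\textbf{Main obstacle.} The delicate step is isolating the dominant exponential. It requires the strict inequality $\rho^{\max}_{2q+1}>\rho^{\max}_{2q}$, which is the reason for the hypothesis $q>0$; for $q=0$ the series is identically zero. It also requires handling a possible coincidence $\alpha_q^M=\overline{\alpha_q}^M$, and this is resolved by the observation $\operatorname{Re}\alpha_q=1$. Smaller roots may collide among themselves under $\gamma\mapsto\gamma^M$ in uncontrolled ways, but this does not matter, since only the top-modulus group is needed.
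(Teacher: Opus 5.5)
Your proposal is correct and follows essentially the same route as the paper. It argues by contradiction, using Skolem--Mahler--Lech together with Dirichlet's theorem to force $S_{1+kM}(d,q)=0$ for all large $k$. It then isolates the dominant conjugate pair $\alpha_q,\overline{\alpha_q}$ via strict interlacing, and uses $\operatorname{Re}\alpha_q=1$ to dispose of the case $\alpha_q^M=\overline{\alpha_q}^M$. Your explicit grouping of the bases by the value of $\gamma^M$ is simply a slightly more careful bookkeeping of the same linear-independence step the paper invokes.
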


\begin{proof}
By Corollary~\ref{cor:logarithmic-power-sums},
\[
c_n(d,q)
=
-\frac{S_n(d,q)}{n},
\]
where
\[
S_n(d,q)
=
\sum_{j=1}^{q}
\left(
\alpha_j^n+\overline{\alpha_j}^{\,n}
\right)
-
\sum_{j=1}^{q}
\left(
\beta_j^n+\overline{\beta_j}^{\,n}
\right),
\]
with
\[
\alpha_j
=
1-i\sqrt d\,\rho_{2q+1,j}
\]
and
\[
\beta_j
=
1-i\sqrt d\,\rho_{2q,j}.
\]
Here
\[
0<\rho_{m,1}<\cdots<\rho_{m,\lfloor m/2\rfloor}
\]
are the positive roots of the ordinary Hermite polynomial \(\He_m\).

Suppose, for a contradiction, that
\[
c_p(d,q)=0
\]
for all sufficiently large odd primes \(p\). Equivalently,
\[
S_p(d,q)=0
\]
for all sufficiently large odd primes \(p\).

The sequence
\[
\bigl(S_n(d,q)\bigr)_{n\geq1}
\]
is a linear recurrence sequence over a field of characteristic zero. By the Skolem--Mahler--Lech theorem,
Theorem~\ref{thm:skolem-mahler-lech}, its zero set is the union of a finite set and finitely many arithmetic progressions. Let \(M\) be a common multiple of the moduli of these arithmetic progressions. Then, outside a finite exceptional set, membership in the zero set is determined entirely by the residue class modulo \(M\). By Dirichlet's theorem, Theorem~\ref{thm:dirichlet-primes}, there are infinitely many primes
congruent to \(1\pmod M\). By assumption, every sufficiently large such prime is a zero of \(S_n(d,q)\). Hence the residue class \(1\pmod M\) must be one of the eventual zero classes. Therefore
\[
S_{1+kM}(d,q)=0
\]
for every sufficiently large integer \(k\).
The subsequence \(S_{1+kM}(d,q)\) is an exponential polynomial in
\(k\). Explicitly,
\[
S_{1+kM}(d,q)
=
\sum_{j=1}^{q}
\alpha_j\left(\alpha_j^M\right)^k
+
\sum_{j=1}^{q}
\overline{\alpha_j}
\left(\overline{\alpha_j}^{\,M}\right)^k
-
\sum_{j=1}^{q}
\beta_j\left(\beta_j^M\right)^k
-
\sum_{j=1}^{q}
\overline{\beta_j}
\left(\overline{\beta_j}^{\,M}\right)^k.
\tag{7.1}\label{eq:7.1}
\]
All of the exponential bases occurring here are nonzero. After grouping together equal bases, an exponential polynomial with distinct nonzero bases that vanishes for all sufficiently large integers must vanish
identically, by the linear independence of distinct exponential sequences. Hence
\[
S_{1+kM}(d,q)=0
\]
for every \(k\geq0\).
Let
\[
\rho_{\max}
=
\rho_{2q+1,q}
\]
be the largest positive root of \(\He_{2q+1}\), and set
\[
\alpha
=
1-i\sqrt d\,\rho_{\max}.
\]
By the strict interlacing of consecutive Hermite roots,
Theorem~\ref{thm:hermite-interlacing},
\[
\rho_{2q+1,q}
>
\rho_{2q,q}.
\]
Since
\[
|1-i\sqrt d\,\rho|
=
\sqrt{1+d\rho^2},
\]
which is strictly increasing for \(\rho>0\), it follows that
\[
|\alpha|
=
\sqrt{1+d\rho_{\max}^2}
\]
is strictly larger than the modulus of every \(\beta_j\) and every \(\alpha_j\) with \(j<q\). Thus the only terms of maximal modulus in \eqref{eq:7.1} are those with bases
\[
\alpha^M
\qquad\text{and}\qquad
\overline{\alpha}^{\,M}.
\]
Suppose first that
\[
\alpha^M
\neq
\overline{\alpha}^{\,M}.
\]
Then these are distinct exponential bases. Their coefficients in \eqref{eq:7.1}
are respectively
\[
\alpha
\qquad\text{and}\qquad
\overline{\alpha},
\]
both of which are nonzero. Hence their contributions cannot disappear from an identically zero exponential polynomial. This is a contradiction.

Suppose instead that
\[
\alpha^M
=
\overline{\alpha}^{\,M}.
\]
Then the two maximal-modulus terms combine to give
\[
\left(\alpha+\overline{\alpha}\right)
\left(\alpha^M\right)^k.
\]
But
\[
\alpha+\overline{\alpha}
=
2.
\]
The combined coefficient is therefore nonzero, which again contradicts the identical vanishing of \eqref{eq:7.1}.

Both possibilities lead to contradictions. Hence \(S_p(d,q)\), and therefore \(c_p(d,q)\), is nonzero for infinitely many odd primes
\(p\).
\end{proof}

The following form will be used repeatedly.

\begin{corollary}
\label{cor:prime-coefficient-support-bound}
Let \(d\) be a positive even integer, and let \(q>0\). Then there exist infinitely many odd
primes \(p\) such that
\[
c_p(d,q)\neq0.
\]
Consequently, for any prescribed bound \(M>0\), one may choose such a
prime satisfying
\(
p>M.
\)
\end{corollary}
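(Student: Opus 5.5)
This corollary should follow directly from Theorem~\ref{thm:prime-coefficient-nonvanishing}, so the plan is essentially a bookkeeping argument. Fix a positive even integer \(d\) and an integer \(q>0\), and let
\[
\mathcal P_{d,q}=\{p\ \text{odd prime}: c_p(d,q)\neq0\}.
\]
Theorem~\ref{thm:prime-coefficient-nonvanishing}, applied with these same \(d\) and \(q\), asserts exactly that \(\mathcal P_{d,q}\) is infinite. This is the first assertion of the corollary.

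For the second assertion, let a bound \(M>0\) be given. The set of odd primes \(p\le M\) is finite, so \(\mathcal P_{d,q}\setminus\{p: p\le M\}\) is an infinite set with a finite set removed. It is therefore nonempty, and any element of it is an odd prime \(p>M\) with \(c_p(d,q)\neq0\). No further input is needed; in particular, the Skolem--Mahler--Lech and Dirichlet inputs are used only inside Theorem~\ref{thm:prime-coefficient-nonvanishing}.

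There is no genuine obstacle. The only point to check is that the hypotheses match: \(d\) even and \(q>0\) are exactly the hypotheses of the theorem. Also, the coefficients \(c_n(d,q)\) are defined in the variable \(u=t^d\), as in the definition preceding the theorem, so no change of variable is needed.

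For later use, I would note in passing the reason for this formulation. The bound \(M\) will be chosen larger than every even part size and multiplicity appearing in \(\lambda\) and \(\mu\). Then, in the logarithmic identity of Corollary~\ref{cor:logarithmic-core-identity}, the coefficient of \(t^{dp}\) will receive contributions only from part sizes dividing \(dp\), and for \(p\) large these are just the divisors of \(d\) (times \(1\)) together with \(dp\) itself.
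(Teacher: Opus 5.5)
Your argument is correct and matches the paper's proof: you apply Theorem~\ref{thm:prime-coefficient-nonvanishing} to get infinitely many such primes, then note that an infinite set of primes cannot lie below any fixed bound $M$. One small inaccuracy, in your closing aside only: for $d=4$ the even divisors of $4p$ also include $2p$, not just the divisors of $d$ together with $dp$. This is harmless, since $2p$ exceeds every part size once $p$ is large, and the aside is not part of the proof.
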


\begin{proof}
By Theorem~\ref{thm:prime-coefficient-nonvanishing}, the set of odd
primes \(p\) satisfying
\[
c_p(d,q)\neq0
\]
is infinite. Every infinite set of primes is unbounded, so there exist such primes exceeding any prescribed bound \(M\).
\end{proof}

\subsection{Reduction to a Single Exceptional Factor}
\label{subsec:exceptional-reduction}

We first recall the reduction obtained from the equality
\[
P_\lambda(x)=P_\mu(x).
\]
Recall
\(
P_\lambda(x)
=
E_\lambda(x)O_\lambda(x), 
\)
and
\(
P_\mu(x)
=
E_\mu(x)O_\mu(x).
\)
The Quotient Reduction theorem implies that there exists an integer \(r\ge1\) such that
\[
E_\lambda(x)=(x^2+1)^rE_\mu(x)
\]
and
\[
O_\mu(x)=(x^2+1)^rO_\lambda(x).
\]

\begin{theorem}
\label{thm:odd-quotient-first-step}
Suppose that
\[
O_\mu(x)
=
(x^2+1)^rO_\lambda(x)
\]
for some integer \(r\geq1\). Then
\[
r=1.
\]
Moreover,
\(
s_1(\mu)=2,\text{ and }
s_1(\lambda)\in\{0,1\}.
\)
\end{theorem}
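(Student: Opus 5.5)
The plan is to locate where the factor \(x^2+1\) can sit inside an odd Hermite product, use simplicity of roots to force \(r=1\), and then read off \(s_1\) from a single coefficient of the normalized reciprocals.

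\emph{Step 1: only the part size \(1\) can carry \(x^2+1\).} Let \(q\ge3\) be odd and \(k\ge1\), and suppose \(\pm i\) is a root of \(G_{q,k}\). By Proposition~\ref{prop:odd-root-geometry}, \((\pm i)^q=i\sqrt q\,\sigma\) for a real root \(\sigma\) of \(\He_k\). Taking absolute values gives \(\sigma^2=1/q\). This is rational but not an integer, so \(\sigma\) is not an algebraic integer. That contradicts the fact that \(\sigma\) is a root of the monic integer polynomial \(\He_k\); this is the same argument as Case~1 of Theorem~\ref{thm:cross-factor-rigidity}. Hence \(x^2+1\nmid G_{q,k}\) for every odd \(q\ge3\). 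So in any odd product \(O_\nu\), the only factor that can be divisible by \(x^2+1\) is \(G_{1,s_1(\nu)}(x)=\He^{[-1]}_{s_1(\nu)}(x)\).

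\emph{Step 2: \(r=1\).} By Corollary~\ref{cor:generalized-hermite-roots}, every root of \(\He^{[-1]}_{s_1(\nu)}\) is simple. Combined with Step 1, the multiplicity of \(x^2+1\) in \(O_\mu\) is therefore at most \(1\). Since \((x^2+1)^r\) divides \(O_\mu\) by hypothesis, we get \(r\le1\), hence \(r=1\).

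\emph{Step 3: determining \(s_1\).} Take normalized reciprocals of the identity \(O_\mu=(x^2+1)O_\lambda\), using Lemma~\ref{lem:normalized-reciprocal-multiplicative}. The reciprocal of \(x^2+1\) is \(1+t^2\), so
\[
\prod_{d\text{ odd}}S_{d,s_d(\mu)}(t)=(1+t^2)\prod_{d\text{ odd}}S_{d,s_d(\lambda)}(t).
\]
By Lemma~\ref{lem:odd-reciprocal-expansion}, \(S_{d,m}(t)=1+O(t^{2d})\), and for \(d\ge3\) this is \(1+O(t^6)\). So only \(d=1\) contributes to the coefficient of \(t^2\). Comparing that coefficient on both sides gives
\[
\binom{s_1(\mu)}{2}=1+\binom{s_1(\lambda)}{2}.
\]
Writing \(a=s_1(\mu)\) and \(b=s_1(\lambda)\), this is equivalent to \((a-b)(a+b-1)=2\) with integers \(a,b\ge0\). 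Here \(a-b\) is a positive divisor of \(2\), so there are two cases:
\begin{itemize}
\item \(a-b=1\) and \(a+b=3\), which gives \((a,b)=(2,1)\);
\item \(a-b=2\) and \(a+b=2\), which gives \((a,b)=(2,0)\).
\end{itemize}
Hence \(s_1(\mu)=2\) and \(s_1(\lambda)\in\{0,1\}\), as claimed. As a consistency check, \(G_{1,2}=x^2+1\) itself.

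\emph{Main obstacle.} The delicate step is Step 2. Without it, the coefficient comparison in Step 3 also admits solutions for larger \(r\); for example, \(r=3\) allows \(\binom{3}{2}=3+\binom{0}{2}\). So the argument rests on the exclusion of \(q\ge3\) in Step 1 together with simplicity of the roots of \(\He^{[-1]}_m\). I would check carefully that Step 1 needs no restriction on \(k\), since the algebraic-integer argument works for all \(k\ge1\).
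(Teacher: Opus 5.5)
Your proof is correct, but it gets $r=1$ by a different mechanism than the paper. The paper uses no root information here. It compares the coefficients of both $t^2$ and $t^4$ in the normalized-reciprocal identity modulo $t^6$, which gives $\binom{a}{2}=r+\binom{b}{2}$ and $3\binom{a}{4}=\binom{r}{2}+r\binom{b}{2}+3\binom{b}{4}$. It then eliminates $r$ and factors the result as $(a-b)(a^2+ab+b^2-3a-3b+2)=0$. A positivity estimate on the second factor forces $a=2$, and hence $b\in\{0,1\}$ and $r=1$.

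You instead rule out $r\ge2$ structurally. The algebraic-integer argument from Case~1 of Theorem~\ref{thm:cross-factor-rigidity} shows that $\pm i$ is never a root of $G_{q,k}$ for odd $q\ge3$. Simplicity of the roots of $\He^{[-1]}_m$ then caps the multiplicity of $x^2+1$ in any odd product at one. After that you only need the $t^2$ coefficient, and $(a-b)(a+b-1)=2$ settles everything. You should say explicitly that $a>b$ because $\binom{a}{2}>\binom{b}{2}$; that is what lets you take $a-b$ to be a positive divisor of $2$.

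Your route is shorter arithmetically and gives a reusable fact: $(x^2+1)^2\nmid O_\nu$ for every partition $\nu$. That explains conceptually why $r=1$. The paper's route stays entirely within formal power series and needs no integrality or root-simplicity inputs, at the cost of a second coefficient and a less transparent Diophantine analysis. Your example $a=3$, $b=0$, $r=3$, showing that the $t^2$ comparison alone cannot bound $r$, is exactly why the paper needs the $t^4$ coefficient.
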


\begin{proof}
Let
\[
a=s_1(\mu),
\qquad
b=s_1(\lambda).
\]

Taking normalized reciprocals gives
\[
\prod_{\substack{d\ge1\\ d\text{ odd}}}
S_{d,s_d(\mu)}(t)
=
(1+t^2)^r
\prod_{\substack{d\ge1\\ d\text{ odd}}}
S_{d,s_d(\lambda)}(t),
\]
where
\[
S_{d,m}(t)
=
t^{dm}\He_m^{[-d]}(t^{-d}).
\]

For \(d=1\),
\[
S_{1,m}(t)
=
1+\binom{m}{2}t^2
+
3\binom{m}{4}t^4
+
O(t^6),
\]
while for every odd integer \(d\ge3\),
\[
S_{d,m}(t)
=
1+O(t^6).
\]
Hence, modulo \(t^6\),
\[
S_{1,a}(t)
\equiv
(1+t^2)^rS_{1,b}(t).
\]

Expanding
\[
(1+t^2)^r
=
1+rt^2+\binom{r}{2}t^4+O(t^6),
\]
and comparing coefficients of \(t^2\) gives
\[
\binom{a}{2}
=
r+\binom{b}{2}.
\tag{7.2}\label{eq:7.2}
\]
Comparing coefficients of \(t^4\) gives
\[
3\binom{a}{4}
=
\binom{r}{2}
+
r\binom{b}{2}
+
3\binom{b}{4}.
\tag{7.3} \label{eq:7.3}
\]

From \eqref{eq:7.2},
\[
r
=
\binom{a}{2}
-
\binom{b}{2}.
\]
Since \(r\ge1\), it follows that
\[
\binom{a}{2}
>
\binom{b}{2},
\]
and hence
\[
a>b.
\]

Substituting
\[
r
=
\binom{a}{2}
-
\binom{b}{2}
\]
into \eqref{eq:7.3} and simplifying yields
\[
(a-b)
\left(
a^2+ab+b^2-3a-3b+2
\right)
=
0.
\]

Since \(a>b\), the first factor is nonzero. Therefore
\[
a^2+ab+b^2-3a-3b+2
=
0.
\tag{7.4}\label{eq:7.4}
\]

We claim that \(a\ge3\) is impossible.

If \(b=0\), then
\[
a^2+ab+b^2-3a-3b+2
=
(a-1)(a-2),
\]
which is strictly positive for every \(a\ge3\).

If \(b\ge1\), then
\[
\begin{aligned}
a^2+ab+b^2-3a-3b+2
&=
a(a-3)+b(a+b-3)+2.
\end{aligned}
\]
Since \(a\ge3\),
\[
a(a-3)\ge0,
\]
and
\[
b(a+b-3)\ge b^2>0.
\]
Hence the right-hand side is strictly positive, contradicting \eqref{eq:7.4}.

Therefore
\[
a=2.
\]

Since \(a>b\), we obtain
\[
b\in\{0,1\}.
\]

Finally,
\[
r
=
\binom{2}{2}
-
\binom{b}{2}
=
1,
\]
because
\[
\binom{0}{2}
=
\binom{1}{2}
=
0.
\]

Thus
\[
r=1,
\]
and
\[
s_1(\mu)=2,
\qquad
s_1(\lambda)\in\{0,1\},
\]
as claimed.
\end{proof}

From Theorem \ref{thm:quotient-reduction} and Theorem \ref{thm:odd-quotient-first-step}, we obtain that when $e_\lambda \neq e_\mu$,
\[
\boxed{
E_\lambda(x)
=
(x^2+1)E_\mu(x)
}
\tag{7.5}\label{eq:7.5}
\]
and
\[
\boxed{
O_\mu(x)
=
(x^2+1)O_\lambda(x).
}
\tag{7.6}\label{eq:7.6}
\]

In particular,
\[
e_\lambda-e_\mu=2
\]
and
\[
o_\mu-o_\lambda=2.
\]

\subsection{The Odd-Part Discrepancy}
\label{subsec:odd-discrepancy-exceptional}

We now determine the residual discrepancy between the odd parts of
\(\lambda\) and \(\mu\).

By Theorem~\ref{thm:odd-quotient-first-step}, the identity
\[
O_\mu(x)
=
(x^2+1)O_\lambda(x)
\]
implies
\[
s_1(\mu)=2,
\qquad
s_1(\lambda)\in\{0,1\}.
\]

We show that, after removing the distinguished pair \((1,1)\) from \(\mu\), the remaining odd subpartitions of \(\lambda\) and \(\mu\) are related by Operation~\(1\).

\begin{proposition}
\label{prop:odd-exceptional-decomposition}
Suppose
\[
O_\mu(x)=(x^2+1)O_\lambda(x).
\]
Then, for every odd integer \(d\ge3\),
\[
\max\{s_d(\lambda),s_d(\mu)\}\ge2
\quad\Longrightarrow\quad
s_d(\lambda)=s_d(\mu).
\]
After cancelling all common odd parts of multiplicity at least two, the odd subpartition of \(\lambda\) is related by a single legal application of Operation~1 to the odd subpartition obtained from \(\mu\) by deleting the two parts \(1,1\).

\end{proposition}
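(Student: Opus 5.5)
The plan is to reduce the identity \(O_\mu=(x^2+1)O_\lambda\) to an instance of the equal-product situation treated in Section~\ref{sec:equal-even-weight}, by absorbing the factor \(x^2+1\) into the part \(1\) of \(\mu\). By Theorem~\ref{thm:odd-quotient-first-step} we already know \(s_1(\mu)=2\) and \(s_1(\lambda)\in\{0,1\}\). Moreover
\[
G_{1,2}(x)=x^2+1,
\]
so
\[
O_\mu(x)=(x^2+1)\prod_{\substack{d\ge3\\ d\text{ odd}}}G_{d,s_d(\mu)}(x).
\]
Let \(\mu'\) be the partition obtained from \(\mu\) by deleting the two parts \(1,1\). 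Then \(O_\mu=(x^2+1)O_{\mu'}\), and cancelling the nonzero factor \(x^2+1\) gives
\[
O_{\mu'}(x)=O_\lambda(x).
\]

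Next I would apply Theorem~\ref{thm:recovery-higher-odd} to the pair \((\lambda,\mu')\). For every odd \(d\ge3\) we have \(s_d(\mu')=s_d(\mu)\). Hence, whenever \(\max\{s_d(\lambda),s_d(\mu)\}\ge2\), we get \(s_d(\lambda)=s_d(\mu)\), which is the first assertion.

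For \(d=1\) the same theorem is consistent with what we already know. Since \(s_1(\mu')=0\) and \(s_1(\lambda)\le1\), the maximum is at most one, and no constraint arises. This is the only place where the small-multiplicity ambiguity for the part \(1\) enters.

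Then Theorem~\ref{thm:recovery-singleton-odd}, applied to \(O_\lambda=O_{\mu'}\), yields the second assertion. After cancelling common odd parts of multiplicity at least two, the singleton odd parts of \(\lambda\) and \(\mu'\) are related by a single legal application of Operation~1. The sets \(A=A_\lambda\setminus A_{\mu'}\) and \(B=A_{\mu'}\setminus A_\lambda\) have equal sums. Every element of \(B\) is absent from \(\lambda\), because a multiplicity of at least two in \(\lambda\) would force equality with \(\mu'\). A possible singleton part \(1\) of \(\lambda\) simply lands in \(A\), since \(1\notin A_{\mu'}\).

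The main point to check carefully is this last cancellation and legality bookkeeping. The earlier theorems were stated for the full odd subpartitions, so I would verify that replacing \(\mu\) by \(\mu'\) changes nothing for \(d\ge3\). I would also confirm that the part \(1\) causes no conflict with the multiplicity-at-least-two cancellation. This is the case since \(s_1(\mu')=0\) and \(s_1(\lambda)\le 1\).
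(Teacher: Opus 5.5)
Your proof is correct and takes essentially the same route as the paper. The paper cancels \(1+t^2\) at the level of normalized reciprocals and re-runs the least-degree induction and the singleton bookkeeping inline, adjoining \(1\) to \(A_\lambda\) when \(s_1(\lambda)=1\); you instead package the same content as \(O_{\mu'}=O_\lambda\) and cite Theorems~\ref{thm:recovery-higher-odd} and~\ref{thm:recovery-singleton-odd} directly, which is legitimate because neither theorem requires \(|\lambda|=|\mu'|\) (and your observation that a singleton \(1\) in \(\lambda\) lands in \(A\) because \(s_1(\mu')=0\) is exactly the paper's \(A_\lambda'\) device).
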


\begin{proof}
We have
\[
s_1(\mu)=2,
\qquad
s_1(\lambda)\in\{0,1\}
\]
from Theorem~\ref{thm:odd-quotient-first-step}.

Write
\[
b=s_1(\lambda)\in\{0,1\}.
\]

Recall that, for every positive odd integer \(d\) and every \(m\ge0\), the normalized reciprocal of
\[
G_{d,m}(x)=\He_m^{[-d]}(x^d)
\]
is
\[
S_{d,m}(t)
=
t^{dm}G_{d,m}(t^{-1}).
\]
By Lemma~\ref{lem:odd-reciprocal-expansion},
\[
S_{d,m}(t)
=
1+\binom{m}{2}d\,t^{2d}
+
O(t^{4d}).
\]

Since
\[
s_1(\mu)=2,
\]
the part-size-\(1\) factor in \(O_\mu\) is
\[
\He_2^{[-1]}(x)=x^2+1,
\]
whose normalized reciprocal is
\[
S_{1,2}(t)=1+t^2.
\]
On the other hand,
\[
b\in\{0,1\},
\]
so
\[
S_{1,b}(t)=1.
\]

Taking normalized reciprocals of
\[
O_\mu(x)
=
(x^2+1)O_\lambda(x)
\]
therefore gives
\[
(1+t^2)
\prod_{\substack{d\ge3\\d\text{ odd}}}
S_{d,s_d(\mu)}(t)
=
(1+t^2)
\prod_{\substack{d\ge3\\d\text{ odd}}}
S_{d,s_d(\lambda)}(t).
\]
Cancelling the common factor \(1+t^2\), we obtain
\[
\prod_{\substack{d\ge3\\d\text{ odd}}}
S_{d,s_d(\mu)}(t)
=
\prod_{\substack{d\ge3\\d\text{ odd}}}
S_{d,s_d(\lambda)}(t).
\tag{7.7}\label{eq:7.7}
\]

We now apply the same least-degree argument used in
Theorem~\ref{thm:recovery-higher-odd}.

Let \(d\ge3\) be odd, and suppose inductively that all odd part sizes
\(e\) with
\[
3\le e<d
\]
have already been dealt with: if either multiplicity is at least two, then the two multiplicities have been shown equal and the corresponding common reciprocal factors have been cancelled; if both multiplicities
belong to \(\{0,1\}\), then both reciprocal factors are equal to \(1\) and contribute nothing.

After these cancellations, the coefficient of \(t^{2d}\) in the
left-hand side of \eqref{eq:7.7} is
\[
\binom{s_d(\mu)}{2}d,
\]
while the coefficient of \(t^{2d}\) in the right-hand side is
\[
\binom{s_d(\lambda)}{2}d.
\]
All factors indexed by an odd integer \(e>d\) begin in degree
\(2e>2d\), and hence do not contribute.

Therefore
\[
\binom{s_d(\lambda)}{2}
=
\binom{s_d(\mu)}{2}.
\]
Since the map
\[
m\longmapsto \binom{m}{2}
\]
is injective for \(m\ge1\), with the only ambiguity
\[
\binom{0}{2}
=
\binom{1}{2}
=
0,
\]
we conclude that
\[
\max\{s_d(\lambda),s_d(\mu)\}\ge2
\quad\Longrightarrow\quad
s_d(\lambda)=s_d(\mu).
\]

Thus every odd part size \(d\ge3\) occurring with multiplicity at least two occurs in both partitions with the same multiplicity. Cancel all of these common Hermite factors from
\[
O_\mu(x)
=
(x^2+1)O_\lambda(x).
\]

The factor \(x^2+1\) on the left is precisely the contribution of the two parts \(1,1\) in \(\mu\), so it cancels the distinguished factor \(x^2+1\) on the right. What remains consists entirely of singleton
odd parts.

Define
\[
A_\lambda
=
\{d\ge3:\ d\text{ is odd and }s_d(\lambda)=1\},
\]
and
\[
A_\mu
=
\{d\ge3:\ d\text{ is odd and }s_d(\mu)=1\}.
\]
Since
\[
s_1(\lambda)=b\in\{0,1\},
\]
the residual polynomial identity becomes
\[
x^b
\prod_{d\in A_\lambda}x^d
=
\prod_{e\in A_\mu}x^e.
\]
Hence
\[
b+\sum_{d\in A_\lambda}d
=
\sum_{e\in A_\mu}e.
\tag{7.8}\label{eq:7.8}
\]

Let
\[
A_\lambda'
=
\begin{cases}
A_\lambda,&b=0,\\[2mm]
A_\lambda\cup\{1\},&b=1.
\end{cases}
\]
Then \eqref{eq:7.8} is equivalent to
\[
\sum_{d\in A_\lambda'}d
=
\sum_{e\in A_\mu}e.
\]

Remove the common elements and set
\[
A=A_\lambda'\setminus A_\mu,
\qquad
B=A_\mu\setminus A_\lambda'.
\]
Then
\[
\sum_{a\in A}a
=
\sum_{b\in B}b.
\tag{7.9}\label{eq:7.9}
\]

Every part in \(A\) occurs in \(\lambda\) with multiplicity exactly one. We claim that every part in \(B\) is absent from \(\lambda\). Indeed, let \(b\in B\). Then
\[
s_b(\mu)=1
\]
and
\[
s_b(\lambda)\neq1.
\]
If \(s_b(\lambda)\ge2\), the higher-multiplicity rigidity proved above
would imply
\[
s_b(\mu)=s_b(\lambda)\ge2,
\]
contradicting \(s_b(\mu)=1\). Therefore
\[
s_b(\lambda)=0.
\]

Thus the distinct odd parts in \(A\) may be removed from \(\lambda\) and the distinct odd parts in \(B\) inserted. By \eqref{eq:7.9}, the total sum of the removed parts equals the total sum of the inserted parts. Hence this is a legal application of Operation~\(1\).

After this operation, the odd subpartition of \(\lambda\) is exactly the odd subpartition of \(\mu\) with the distinguished pair \((1,1)\) deleted.
\end{proof}

\subsection{The Exact Multiplicity of the Part \(2\)}
\label{subsec:exact-part-two}

From the even quotient identity \eqref{eq:7.5}, Proposition
\ref{prop:cyclotomic-parity-exceptional} gives
\[
s_2(\lambda)\equiv0\pmod2,
\qquad
s_2(\mu)\equiv1\pmod2,
\]
and
\[
s_4(\lambda)\equiv1\pmod2,
\qquad
s_4(\mu)\equiv0\pmod2.
\]
For every positive even integer \(d\neq2,4\),
\[
s_d(\lambda)\equiv s_d(\mu)\pmod2.
\tag{$\dagger$} \label{eq:dagger}
\]

Moreover, Corollary~\ref{cor:core-equality-exceptional} gives
\[
C_\lambda(x)=C_\mu(x).
\]

Taking normalized reciprocals and then formal logarithms, we obtain
\[
\sum_{\substack{d\geq2\\d\text{ even}}}
\log T_{d,s_d(\lambda)}(t)
=
\sum_{\substack{d\geq2\\d\text{ even}}}
\log T_{d,s_d(\mu)}(t).
\tag{7.10}\label{eq:7.10}
\]

By Lemma~\ref{lem:logarithmic-divisor-support}, the coefficient of
\(t^N\) in \eqref{eq:7.10} receives contributions only from those positive even
integers \(d\) satisfying
\[
d\mid N.
\]

We now prove that the part \(2\) occurs exactly once in \(\mu\) and is
absent from \(\lambda\).

\begin{proposition}
\label{prop:exact-two-multiplicity}
We have
\(
s_2(\lambda)=0
\; \text{and}\;
s_2(\mu)=1.
\)
\end{proposition}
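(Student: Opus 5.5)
The plan is to extract two coefficients from the logarithmic identity \eqref{eq:7.10}: the coefficient of \(t^2\) and the coefficient of \(t^{2p}\) for a suitably large odd prime \(p\). In each case Lemma~\ref{lem:logarithmic-divisor-support} leaves only the part size \(2\) (and possibly \(2p\)) contributing. By the parity statements preceding the proposition, I write
\[
s_2(\lambda)=2a,\qquad s_2(\mu)=2b+1
\]
with \(a,b\ge0\). Then
\[
T_{2,s_2(\lambda)}=B_{2,a},\qquad T_{2,s_2(\mu)}=A_{2,b},
\]
and the goal is to show \(a=b=0\).

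\textbf{Step 1 (coefficient of \(t^2\)).} The only even \(d\) dividing \(2\) is \(d=2\). By Lemma~\ref{lem:core-reciprocal-first-term}, \(\log T_{2,m}(t)=-2\lfloor m/2\rfloor t^2+O(t^4)\). Comparing the \(t^2\)-coefficients of \eqref{eq:7.10} therefore gives \(-2a=-2b\), so \(a=b\). Consequently, for every \(N\), the coefficients of \(t^N\) coming from part size \(2\) on the two sides differ exactly by the corresponding coefficient of \(\log\bigl(A_{2,a}/B_{2,a}\bigr)\), written in the variable \(u=t^2\).

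\textbf{Step 2 (coefficient of \(t^{2p}\)).} Suppose for contradiction that \(a\ge1\). By Corollary~\ref{cor:prime-coefficient-support-bound} with \(d=2\) and \(q=a>0\), I choose an odd prime \(p\) with \(c_p(2,a)\neq0\) and \(2p>n\). The even divisors of \(2p\) are \(2\) and \(2p\). Since \(2p>n\), the part \(2p\) occurs in neither partition, so \(s_{2p}(\lambda)=s_{2p}(\mu)=0\) and \(T_{2p,0}=1\) contributes nothing. By Lemma~\ref{lem:logarithmic-divisor-support}, the coefficient of \(t^{2p}\) in \eqref{eq:7.10} then receives contributions only from \(d=2\). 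Moving everything to one side, this coefficient equation reads \(c_p(2,a)=0\), which is a contradiction. Hence \(a=b=0\), that is, \(s_2(\lambda)=0\) and \(s_2(\mu)=1\).

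\textbf{Main obstacle.} The only delicate point is making sure the \(t^{2p}\)-coefficient is genuinely isolated. This requires verifying that no other even \(d\) divides \(2p\), which is why \(p\) is taken to be an odd prime, and that the large part \(2p\) is absent, which is why \(p\) is taken large; Corollary~\ref{cor:prime-coefficient-support-bound} supplies exactly such a prime. The substantive analytic input, the nonvanishing of \(c_p(2,a)\), is already provided by Theorem~\ref{thm:prime-coefficient-nonvanishing}. Step 1 is needed precisely because that theorem only compares \(A_{d,q}\) and \(B_{d,q}\) with the same index \(q\).
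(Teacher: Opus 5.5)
Your proposal is correct and follows the paper's proof essentially step for step. You get the parity from Proposition~\ref{prop:cyclotomic-parity-exceptional}, compare the $t^2$-coefficients to get $a=b$, and then use the $t^{2p}$-coefficient for a large odd prime $p$ with $c_p(2,q)\neq0$ (supplied by Corollary~\ref{cor:prime-coefficient-support-bound}) to force $q=0$; the paper reads the $t^2$-coefficient from the product identity \eqref{eq:7.11} rather than the logarithmic one, which changes nothing since $\log(1+ct^2+O(t^4))=ct^2+O(t^4)$.
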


\begin{proof}
By Proposition~\ref{prop:cyclotomic-parity-exceptional},
\[
s_2(\lambda)\equiv0\pmod2,
\qquad
s_2(\mu)\equiv1\pmod2.
\]
Hence there exist integers \(a,b\ge0\) such that
\[
s_2(\lambda)=2a,
\qquad
s_2(\mu)=2b+1.
\]

We first show that \(a=b\).
By Corollary~\ref{cor:core-equality-exceptional},
\[
C_\lambda(x)=C_\mu(x),
\]
and hence Proposition~\ref{prop:core-reciprocal-product} gives
\[
\prod_{\substack{d\ge2\\ d\text{\rm\ even}}}
T_{d,s_d(\lambda)}(t)
=
\prod_{\substack{d\ge2\\ d\text{\rm\ even}}}
T_{d,s_d(\mu)}(t).
\tag{7.11}\label{eq:7.11}
\]

By Lemma~\ref{lem:core-reciprocal-first-term},
\[
T_{2,2a}(t)
=
1-2a\,t^2+O(t^4),
\]
whereas
\[
T_{2,2b+1}(t)
=
1-2b\,t^2+O(t^4).
\]
For every even integer \(d>2\), Lemma~\ref{lem:core-reciprocal-first-term} gives
\[
T_{d,m}(t)=1+O(t^d),
\]
and hence no factor with index \(d>2\) contributes to the coefficient
of \(t^2\) in~\eqref{eq:7.11}. Comparing the coefficients of \(t^2\) therefore
gives
\[
-2a=-2b,
\]
so
\[
a=b.
\]

Thus there exists an integer \(q\ge0\) such that
\[
s_2(\lambda)=2q,
\qquad
s_2(\mu)=2q+1.
\]

Suppose, for a contradiction, that \(q>0\).
Let \(N\) be larger than every part occurring in either
\(\lambda\) or \(\mu\). By Corollary~\ref{cor:prime-coefficient-support-bound}, there exists an odd prime
\(p\) such that
\[
2p>N
\]
and
\[
c_p(2,q)\neq0.
\]

By Corollary~\ref{cor:logarithmic-core-identity},
\[
\sum_{\substack{d\ge2\\ d\text{\rm\ even}}}
\log T_{d,s_d(\lambda)}(t)
=
\sum_{\substack{d\ge2\\ d\text{\rm\ even}}}
\log T_{d,s_d(\mu)}(t).
\tag{7.12}\label{eq:7.12}
\]

We compare the coefficient of \(t^{2p}\) in~\eqref{eq:7.12}.
By Lemma~\ref{lem:logarithmic-divisor-support}, a part size \(d\) can contribute to this coefficient
only if
\[
d\mid2p.
\]
Since \(p\) is odd, the only positive even divisors of \(2p\) are
\[
2
\qquad\text{and}\qquad
2p.
\]

Because \(2p>N\), neither partition contains a part of size \(2p\).
Thus the factors corresponding to part size-\(2p\) in~\eqref{eq:7.12} are both zero. Consequently,
the entire coefficient of \(t^{2p}\) comes from the part-size-\(2\)
terms.

Writing
\[
u=t^2,
\]
the difference of the part-size-\(2\) terms is
\[
\log T_{2,2q}(u)-\log T_{2,2q+1}(u)
=
-\log\frac{T_{2,2q+1}(u)}{T_{2,2q}(u)}.
\]
By the definition of \(c_p(2,q)\), the coefficient of \(u^p\) in
this expression is
\[
-c_p(2,q),
\]
which is nonzero by our choice of \(p\).

Therefore the coefficient of \(t^{2p}\) in the difference between the
two sides of~\eqref{eq:7.12} is equal to
\[
-c_p(2,q)\neq0.
\]
This is impossible, since the two sides of~\eqref{eq:7.12} are equal as formal
power series, and hence every coefficient of their difference must
vanish. Therefore
\[
q=0.
\]
Hence
\[
s_2(\lambda)=0
\qquad\text{and}\qquad
s_2(\mu)=1.
\]

In particular,
\[
T_{2,s_2(\lambda)}(t)=T_{2,0}(t)=1
\]
and
\[
T_{2,s_2(\mu)}(t)=T_{2,1}(t)=1. \qedhere
\]
\end{proof}

\subsection{The Exact Multiplicity of the Part \(4\)}
\label{subsec:exact-part-four}

We next prove that the part \(4\) occurs exactly once in \(\lambda\) and
is absent from \(\mu\).


\begin{proposition}
\label{prop:exact-four-multiplicity}
We have
\[
s_4(\lambda)=1
\qquad\text{and}\qquad
s_4(\mu)=0.
\]
\end{proposition}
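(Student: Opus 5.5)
Since the statement about $s_4$ mirrors the one about $s_2$, we follow the proof of Proposition~\ref{prop:exact-two-multiplicity}, with $4$ in place of $2$ and with the part-$2$ factors already removed. By Proposition~\ref{prop:cyclotomic-parity-exceptional}, we may write
\[
s_4(\lambda)=2a+1,\qquad s_4(\mu)=2b
\]
for integers $a,b\ge0$. By Proposition~\ref{prop:exact-two-multiplicity}, the part-size-$2$ reciprocal cores on both sides of \eqref{eq:7.11} equal $T_{2,0}=T_{2,1}=1$, so they drop out. Lemma~\ref{lem:core-reciprocal-first-term} then gives
\[
T_{4,2a+1}(t)=1-2a\,t^4+O(t^8),\qquad T_{4,2b}(t)=1-2b\,t^4+O(t^8).
\]
Every factor with even index $d\ge6$ is $1+O(t^d)$ and cannot reach $t^4$. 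Comparing coefficients of $t^4$ in \eqref{eq:7.11} therefore yields $a=b=:q$, so that
\[
s_4(\lambda)=2q+1,\qquad s_4(\mu)=2q.
\]

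Next we show $q=0$ by the prime-coefficient argument. Suppose $q>0$, and let $N$ exceed every part of $\lambda$ and $\mu$. Corollary~\ref{cor:prime-coefficient-support-bound} (with $d=4$) gives an odd prime $p$ with $4p>N$ and $c_p(4,q)\neq0$. We compare coefficients of $t^{4p}$ in \eqref{eq:7.12}. By Lemma~\ref{lem:logarithmic-divisor-support}, only even $d\mid 4p$ can contribute, namely $d\in\{2,4,2p,4p\}$. The size-$2$ terms vanish identically, since both are $\log 1=0$ by Proposition~\ref{prop:exact-two-multiplicity}. This is the reason for treating the part $2$ first: otherwise the divisor $2$ of $4p$ could cancel the contribution from size $4$. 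Sizes $2p$ and $4p$ exceed $N$, so they are absent from both partitions. (Note $2p\ge6>4$, so $2p$ is genuinely distinct from $4$.) With $u=t^4$, the remaining difference is
\[
\log T_{4,2q+1}(u)-\log T_{4,2q}(u)=\log\frac{A_{4,q}(u)}{B_{4,q}(u)},
\]
whose $u^p$-coefficient is $c_p(4,q)\neq0$. This contradicts \eqref{eq:7.12}, hence $q=0$, and so $s_4(\lambda)=1$ and $s_4(\mu)=0$.

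The main obstacle is the divisor bookkeeping at $t^{4p}$. We must make sure that the part-size-$2$ contribution has truly been eliminated, and that no other part size divides $4p$ within the support. Both points follow from the choice $4p>N$, from $p$ odd, and from Proposition~\ref{prop:exact-two-multiplicity}. The nonvanishing itself is supplied by Theorem~\ref{thm:prime-coefficient-nonvanishing}, which holds for every even $d$, in particular $d=4$.
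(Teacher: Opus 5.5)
This is essentially the paper's own proof: parity from Proposition~\ref{prop:cyclotomic-parity-exceptional}, then the $t^4$-coefficient comparison giving $a=b$, then the $t^{4p}$-coefficient contradiction via $c_p(4,q)\neq0$. There is one small slip: $4p>N$ does not ensure that $2p$ exceeds every part (e.g.\ $N=10$, $p=3$), and $s_{2p}(\lambda)=s_{2p}(\mu)$ is not yet known at this stage, so you should choose $p$ with $2p>N$, as the paper does, which Corollary~\ref{cor:prime-coefficient-support-bound} permits.
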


\begin{proof}
By Proposition~\ref{prop:cyclotomic-parity-exceptional},
\[
s_4(\lambda)\equiv1\pmod2,
\qquad
s_4(\mu)\equiv0\pmod2.
\]
Hence there exist integers \(a,b\ge0\) such that
\[
s_4(\lambda)=2a+1,
\qquad
s_4(\mu)=2b.
\]

We first show that \(a=b\).
By Corollary~\ref{cor:core-equality-exceptional},
\[
C_\lambda(x)=C_\mu(x),
\]
and hence Proposition~\ref{prop:core-reciprocal-product} gives
\[
\prod_{\substack{d\ge2\\ d\text{\rm even}}}
T_{d,s_d(\lambda)}(t)
=
\prod_{\substack{d\ge2\\ d\text{\rm even}}}
T_{d,s_d(\mu)}(t).
\tag{7.13}\label{eq:7.13}
\]

By Proposition~\ref{prop:exact-two-multiplicity},
\[
T_{2,s_2(\lambda)}(t)=T_{2,0}(t)=1,
\qquad
T_{2,s_2(\mu)}(t)=T_{2,1}(t)=1,
\]
so the part-size-\(2\) factors contribute nothing.

By Lemma~\ref{lem:core-reciprocal-first-term},
\[
T_{4,2a+1}(t)
=
1-2a\,t^4+O(t^8),
\]
whereas
\[
T_{4,2b}(t)
=
1-2b\,t^4+O(t^8).
\]

For every even integer \(d>4\),
\[
T_{d,m}(t)=1+O(t^d),
\]
and since \(d\ge6\), no such factor contributes to the coefficient of
\(t^4\) in~\eqref{eq:7.13}.

Comparing the coefficients of \(t^4\) therefore gives
\[
-2a=-2b,
\]
so
\[
a=b.
\]

Thus there exists an integer \(q\ge0\) such that
\[
s_4(\lambda)=2q+1,
\qquad
s_4(\mu)=2q.
\]

Suppose, for a contradiction, that \(q>0\).
Let \(N\) be larger than every part occurring in either partition.
By Corollary~\ref{cor:prime-coefficient-support-bound}, there exists an odd prime \(p\) such that
\[
2p>N
\]
and
\[
c_p(4,q)\neq0.
\]

By Corollary~\ref{cor:logarithmic-core-identity},
\[
\sum_{\substack{d\ge2\\ d\text{\rm even}}}
\log T_{d,s_d(\lambda)}(t)
=
\sum_{\substack{d\ge2\\ d\text{\rm even}}}
\log T_{d,s_d(\mu)}(t).
\tag{7.14}\label{eq:7.14}
\]

We compare the coefficient of \(t^{4p}\) in~\eqref{eq:7.14}.
By Lemma~\ref{lem:logarithmic-divisor-support}, a part size \(d\) contributes only if
\[
d\mid4p.
\]
Since \(p\) is odd, the positive even divisors of \(4p\) are
\[
2,\ 4,\ 2p,\ 4p.
\]

The part-size-\(2\) terms vanish because
\[
T_{2,s_2(\lambda)}=T_{2,0}=1,
\qquad
T_{2,s_2(\mu)}=T_{2,1}=1.
\]
Moreover,
\[
2p>N,
\]
so neither partition contains a part of size \(2p\) or \(4p\).
Hence the only contribution to the coefficient of \(t^{4p}\) comes
from the part-size-\(4\) terms.

Writing \(u=t^4\),
\[
\log T_{4,2q}(u)
-
\log T_{4,2q+1}(u)
=
-
\log
\frac{T_{4,2q+1}(u)}
     {T_{4,2q}(u)}.
\]
By the definition of \(c_p(4,q)\), the coefficient of \(u^p\) in this
expression is
\[
-c_p(4,q),
\]
which is nonzero.

Therefore the coefficient of \(t^{4p}\) in the difference between the
two sides of~\eqref{eq:7.14} is nonzero, contradicting the fact that the two sides
are equal as formal power series.

Hence \(q=0\), and therefore
\[
s_4(\lambda)=1,
\qquad
s_4(\mu)=0.
\]

In particular,
\[
T_{4,s_4(\lambda)}(t)=T_{4,1}(t)=1,
\qquad
T_{4,s_4(\mu)}(t)=T_{4,0}(t)=1.\qedhere
\]
\end{proof}


\subsection{Equality of the Remaining Even Multiplicities}
\label{subsec:remaining-even-multiplicities}

We now recover every even multiplicity of size at least \(6\).

\begin{proposition}
\label{prop:remaining-even-multiplicities}
For every positive even integer \(d\geq6\),
\[
s_d(\lambda)=s_d(\mu).
\]
\end{proposition}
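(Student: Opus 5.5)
We prove this by induction over the even integers \(d\ge6\), working with the reciprocal core identity \eqref{eq:7.11}. By Propositions~\ref{prop:exact-two-multiplicity} and~\ref{prop:exact-four-multiplicity}, the factors indexed by \(2\) and \(4\) are identically \(1\) on both sides, so they drop out. The parity relation \eqref{eq:dagger} gives
\[
s_d(\lambda)\equiv s_d(\mu)\pmod 2,
\qquad d\ge6 .
\]
Hence \(\varepsilon(s_d(\lambda))=\varepsilon(s_d(\mu))\) for every such \(d\). Suppose inductively that \(s_e(\lambda)=s_e(\mu)\) for all even \(e\) with \(6\le e<d\). We cancel these common factors from \eqref{eq:7.11} and obtain
\[
\prod_{\substack{e\ge d\\ e\text{ even}}}T_{e,s_e(\lambda)}(t)=\prod_{\substack{e\ge d\\ e\text{ even}}}T_{e,s_e(\mu)}(t).
\]

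Next we compare the coefficient of \(t^d\). By Lemma~\ref{lem:core-reciprocal-first-term}, this coefficient equals \(-2\lfloor s_d(\lambda)/2\rfloor\) on the left and \(-2\lfloor s_d(\mu)/2\rfloor\) on the right. Every factor with \(e>d\) is \(1+O(t^e)\), so it cannot contribute. This gives
\[
\lfloor s_d(\lambda)/2\rfloor=\lfloor s_d(\mu)/2\rfloor .
\]
Since \(s_d(\lambda)\) and \(s_d(\mu)\) have the same parity, it follows that \(s_d(\lambda)=s_d(\mu)\), which completes the inductive step. The base case \(d=6\) is identical, with an empty set of cancelled factors.

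The point needing care is the claim that \(T_{e,m}(t)=1+O(t^e)\) for every \(e>d\), including when \(m\) is odd. This holds because \(T_{e,m}\in 1+t^e\mathbb{Q}[t^e]\), as shown in the proof of Lemma~\ref{lem:logarithmic-divisor-support}. We also need the parity input \eqref{eq:dagger}: the floor comparison alone cannot distinguish \(2k\) from \(2k+1\). No prime-coefficient argument is needed here, because the parities already agree; that tool was required only at \(d=2,4\), where the parities differ.

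Finally, the product is finite, since only finitely many \(s_d\) are nonzero. Therefore the induction terminates once \(d\) exceeds all parts of \(\lambda\) and \(\mu\), and for larger \(d\) both multiplicities are zero.
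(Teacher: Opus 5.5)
Your proposal is correct and follows the paper's own proof essentially step for step. The part-size-$2$ and part-size-$4$ factors drop out by Propositions~\ref{prop:exact-two-multiplicity} and~\ref{prop:exact-four-multiplicity}; you then induct upward over even $d\ge 6$ on the reciprocal core identity, read off $s_d-\varepsilon(s_d)$ from the $t^d$-coefficient via Lemma~\ref{lem:core-reciprocal-first-term}, and resolve the remaining ambiguity with the parity relation \eqref{eq:dagger}, exactly as the paper does.
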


\begin{proof}
By \eqref{eq:dagger}, the two multiplicities have the same parity:
\[
\varepsilon(s_d(\lambda))
=
\varepsilon(s_d(\mu)).
\tag{7.15} \label{eq:7.15}
\]
From Propositions \ref{prop:exact-two-multiplicity} and \ref{prop:exact-four-multiplicity}, the reciprocal core identity becomes
\[
\prod_{\substack{d\geq6\\d\text{ even}}}
T_{d,s_d(\lambda)}(t)
=
\prod_{\substack{d\geq6\\d\text{ even}}}
T_{d,s_d(\mu)}(t).
\tag{7.16}\label{eq:7.16}
\]

We proceed through the positive even integers \(d\geq6\) in increasing
order.

Assume inductively that
\[
s_e(\lambda)=s_e(\mu)
\]
for every even integer \(e\) satisfying
\[
6\leq e<d.
\]
Canceling the corresponding common factors from \eqref{eq:7.16}, we obtain
\[
\prod_{\substack{e\geq d\\e\text{ even}}}
T_{e,s_e(\lambda)}(t)
=
\prod_{\substack{e\geq d\\e\text{ even}}}
T_{e,s_e(\mu)}(t).
\tag{7.17}\label{eq:7.17}
\]

By Lemma~\ref{lem:core-reciprocal-first-term},
\[
T_{d,s_d(\lambda)}(t)
=
1-
\left(
s_d(\lambda)
-
\varepsilon(s_d(\lambda))
\right)t^d
+
O(t^{2d}),
\]
and similarly for \(\mu\).

Every factor indexed by \(e>d\) has its first possible nonconstant term
in degree \(e>d\). Hence no factor with index larger than \(d\)
contributes to the coefficient of \(t^d\) in \eqref{eq:7.17}.

Equality of the \(t^d\)-coefficients therefore gives
\[
s_d(\lambda)
-
\varepsilon(s_d(\lambda))
=
s_d(\mu)
-
\varepsilon(s_d(\mu)).
\]

Using the parity equality \eqref{eq:7.15}, we conclude that
\[
s_d(\lambda)=s_d(\mu).
\]

This completes the induction.
\end{proof}

Combining Propositions~\ref{prop:exact-two-multiplicity},
\ref{prop:exact-four-multiplicity}, and
\ref{prop:remaining-even-multiplicities}, we have proved
\[
\boxed{
\begin{aligned}
s_2(\lambda)&=0,
&
s_2(\mu)&=1,\\
s_4(\lambda)&=1,
&
s_4(\mu)&=0,
\end{aligned}
}
\]
and
\[
\boxed{
s_d(\lambda)=s_d(\mu)
\qquad
(d\geq6,\ d\text{ even}).
}
\]

\subsection{Construction of the Residual Partitions}
\label{subsec:construct-alpha-beta}

We now construct the residual partitions \(\alpha\) and \(\beta\).

By Proposition~\ref{prop:exact-two-multiplicity},
\[
s_2(\lambda)=0,
\qquad
s_2(\mu)=1,
\]
and by Proposition~\ref{prop:exact-four-multiplicity},
\[
s_4(\lambda)=1,
\qquad
s_4(\mu)=0.
\]
Furthermore, Proposition~\ref{prop:remaining-even-multiplicities}
shows that
\[
s_d(\lambda)=s_d(\mu)
\]
for every even integer \(d\ge6\).

Define
\[
\alpha=\lambda\setminus(4),
\qquad
\beta=\mu\setminus(2,1,1).
\]
Equivalently,
\[
\lambda=\alpha\sqcup(4),
\qquad
\mu=\beta\sqcup(2,1,1).
\]

The even subpartitions of \(\alpha\) and \(\beta\) are therefore identical.

On the other hand, Proposition~\ref{prop:odd-exceptional-decomposition} shows that, after removing the distinguished pair of parts \((1,1)\) from \(\mu\), the remaining odd parts of \(\lambda\) and \(\mu\) are related by a single legal application of Operation~1. Since removing the part \(4\) from \(\lambda\) and the part \(2\) from \(\mu\) does not affect their odd subpartitions, it follows that the odd subpartitions of \(\alpha\) and \(\beta\) are related by Operation~1.

Hence a legal application of Operation~1 transforms \(\alpha\) into \(\beta\). Since Operation~1 changes only odd parts, the same operation may be performed while retaining the distinguished part \(4\). Thus
\[
\alpha\sqcup(4)
\;\sim\;
\beta\sqcup(4).
\]

We now apply Operation~2 to \(\beta\sqcup(4)\). By construction,
\[
\beta=\mu\setminus(2,1,1).
\]
Since
\[
s_1(\mu)=2,\qquad
s_2(\mu)=1,\qquad
s_4(\mu)=0,
\]
we have
\[
s_1(\beta)=s_2(\beta)=s_4(\beta)=0.
\]
Therefore the partition \(\beta\sqcup(4)\) contains exactly one part of size \(4\), no parts of size \(2\), and no parts of size \(1\). Consequently, Operation~2 is legal and replaces the distinguished part
\(4\) by the parts \((2,1,1)\).

Thus
\[
\lambda
=
\alpha\sqcup(4)
\overset{\mathrm{Op.\,1}}{\sim}
\beta\sqcup(4)
\overset{\mathrm{Op.\,2}}{\sim}
\beta\sqcup(2,1,1)
=
\mu.
\]

Therefore
\[
\lambda\sim\mu.
\]

\subsection{Conclusion of the Exceptional Case}
\label{subsec:exceptional-conclusion}

We summarize the result of this section.

\begin{theorem}[The exceptional case]
\label{thm:exceptional-case}
Let \(\lambda,\mu\vdash n\). Suppose that
\(
e_\lambda\neq e_\mu
\)
and
\(
P_\lambda(x)=P_\mu(x).
\)
Then
\[
\lambda\sim\mu.
\]
More precisely, after possibly interchanging \(\lambda\) and \(\mu\),
there exist partitions \(\alpha,\beta\vdash n-4\) such that
\[
\lambda=\alpha\sqcup(4),
\qquad
\mu=\beta\sqcup(2,1,1),
\]
and \(\alpha\) and \(\beta\) are related by Operation~\(1\).
\end{theorem}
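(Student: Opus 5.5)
The proof assembles the results of this section in order. First, interchanging \(\lambda\) and \(\mu\) if necessary, we assume \(e_\lambda>e_\mu\). Then case (i) of Theorem~\ref{thm:quotient-reduction} cannot occur: \(E_\lambda=E_\mu\) would force \(e_\lambda=e_\mu\) by comparing degrees. So case (ii) holds, with the orientation fixed by \(e_\lambda>e_\mu\), since the polynomial \((x^2+1)^r\) multiplies the even factor of larger degree. Theorem~\ref{thm:odd-quotient-first-step} then gives \(r=1\), so we obtain the boxed identities \eqref{eq:7.5} and \eqref{eq:7.6}, together with \(s_1(\mu)=2\) and \(s_1(\lambda)\in\{0,1\}\).

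Next I would pin down the even parts. Applying Proposition~\ref{prop:cyclotomic-parity-exceptional} and Corollary~\ref{cor:core-equality-exceptional} to \eqref{eq:7.5} gives the parity pattern and the core equality \(C_\lambda=C_\mu\). Propositions~\ref{prop:exact-two-multiplicity}, \ref{prop:exact-four-multiplicity} and~\ref{prop:remaining-even-multiplicities} then give
\[
s_2(\lambda)=0,\quad s_2(\mu)=1,\qquad s_4(\lambda)=1,\quad s_4(\mu)=0,
\]
and \(s_d(\lambda)=s_d(\mu)\) for every even \(d\ge6\). The substantive input is the prime-coefficient lemma, which forces the parameter \(q\) to be \(0\) in the parts \(2\) and \(4\). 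With that lemma available, the present step is bookkeeping.

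For the odd parts, Proposition~\ref{prop:odd-exceptional-decomposition} applied to \eqref{eq:7.6} shows that the odd subpartition of \(\lambda\) is obtained by one legal Operation~1 from the odd subpartition of \(\mu\) with the pair \(1,1\) deleted. Now set \(\alpha=\lambda\setminus(4)\) and \(\beta=\mu\setminus(2,1,1)\); these are well defined by the multiplicities above. Each of \(\alpha\) and \(\beta\) is a partition of \(n-4\), because \(|\lambda|=|\mu|=n\). The even subpartitions of \(\alpha\) and \(\beta\) agree, and their odd subpartitions differ by a single Operation~1, so \(\alpha\) and \(\beta\) are related by Operation~1. This gives the precise form claimed in the theorem.

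Finally, to conclude \(\lambda\sim\mu\), I would follow \S\ref{subsec:construct-alpha-beta}. Operation~1 touches only odd parts, so it carries \(\alpha\sqcup(4)\) to \(\beta\sqcup(4)\). Since \(s_1(\beta)=s_2(\beta)=s_4(\beta)=0\), Operation~2 is legal on \(\beta\sqcup(4)\) and produces \(\beta\sqcup(2,1,1)=\mu\). One point needs care: Operation~1 requires the parts it removes to have multiplicity exactly one and the parts it inserts to be absent. Retaining the part \(4\), which is even, does not affect either condition. I would check this explicitly, though I expect no real obstacle, since all the analytic difficulty is already handled by the cited propositions.
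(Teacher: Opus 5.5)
Your proposal is correct and follows the paper's own argument essentially step for step. You use quotient reduction together with Theorem~\ref{thm:odd-quotient-first-step} to get \eqref{eq:7.5}--\eqref{eq:7.6}, the cyclotomic and prime-coefficient propositions to fix every even multiplicity, Proposition~\ref{prop:odd-exceptional-decomposition} for the odd parts, and then Operation~1 followed by Operation~2 on \(\beta\sqcup(4)\). You also check explicitly that case (i) of Theorem~\ref{thm:quotient-reduction} is excluded by degrees and that the orientation in case (ii) matches \(e_\lambda>e_\mu\), which the paper leaves implicit.
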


\begin{proof}
After interchanging the partitions if necessary, the quotient reduction
gives
\(
E_\lambda=(x^2+1)E_\mu
\)
and
\(
O_\mu=(x^2+1)O_\lambda.
\)
The odd quotient analysis gives the decomposition of the odd parts in
Proposition~\ref{prop:odd-exceptional-decomposition}.

The cyclotomic comparison determines the parity of every even-part multiplicity. The prime-coefficient nonvanishing theorem, applied first at part size \(2\) and then at part size \(4\), gives
\[
s_2(\lambda)=0,
\qquad
s_2(\mu)=1,
\]
and
\[
s_4(\lambda)=1,
\qquad
s_4(\mu)=0.
\]

Proposition~\ref{prop:remaining-even-multiplicities} shows that all remaining even multiplicities agree. Hence there exist partitions
\(\alpha,\beta\vdash n-4\) satisfying
\[
\lambda=\alpha\sqcup(4),
\qquad
\mu=\beta\sqcup(2,1,1),
\]
with the even subpartitions of \(\alpha\) and \(\beta\) identical and their odd subpartitions related by Operation~\(1\).

Thus
\[
\alpha\sim\beta,
\]
and one legal application of Operation~\(2\) replaces the distinguished block \((4)\) by \((2,1,1)\). Therefore
\[
\lambda\sim\mu. \qedhere
\]
\end{proof}

\begin{theorem}[Classification of collisions]
\label{thm:main-classification}
Let \(\lambda\) and \(\mu\) be partitions. Then
\[
P_\lambda(x)=P_\mu(x)
\]
if and only if \(\lambda\sim\mu\), where \(\sim\) is the equivalence relation generated by Operations~1 and~2.
\end{theorem}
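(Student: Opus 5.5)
The proof assembles the two directions from results already established. For the ``if'' direction, suppose \(\lambda\sim\mu\). By Definition~\ref{def:equivalence-relation}, \(\lambda\) and \(\mu\) are joined by a finite chain of partitions in which consecutive members differ by one application of Operation~1 or Operation~2, or of the inverse of one of these. Proposition~\ref{prop:operations-preserve} shows that each forward operation preserves the partition polynomial. An inverse operation is just the forward operation read from the other end, so it preserves the polynomial as well. Equality of polynomials then propagates along the chain, which is Corollary~\ref{cor:equivalence-preserves}.

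For the ``only if'' direction, suppose \(P_\lambda(x)=P_\mu(x)\). Comparing degrees gives \(|\lambda|=\deg P_\lambda=\deg P_\mu=|\mu|\), so \(\lambda\) and \(\mu\) are partitions of the same integer \(n\). This matters because both case theorems are stated for partitions of a common \(n\). I then split according to whether \(e_\lambda=e_\mu\) or \(e_\lambda\neq e_\mu\).
\begin{itemize}
\item If \(e_\lambda=e_\mu\), Theorem~\ref{thm:equal-even-weight-classification} gives \(\lambda\sim\mu\) directly, in fact through Operation~1 alone.
\item If \(e_\lambda\neq e_\mu\), Theorem~\ref{thm:exceptional-case} gives \(\lambda\sim\mu\). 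That theorem may interchange \(\lambda\) and \(\mu\), but \(\sim\) is symmetric, so the conclusion is unaffected.
\end{itemize}

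No new argument is needed; the only things to check are that the two cases exhaust all possibilities and that each case theorem's hypotheses are met. The step I would scrutinize most is the legality of the operations used in the exceptional case. Operation~2 requires the partition to contain no parts \(1\) or \(2\), and the construction in Section~\ref{subsec:construct-alpha-beta} applies it only after first performing Operation~1 on \(\alpha\). So I would confirm that this intermediate Operation~1, applied to \(\alpha\sqcup(4)\), is itself legal: it must remove only parts of multiplicity one and insert only absent parts. I would also confirm that it leaves the multiplicity of \(1\) equal to zero before Operation~2 is invoked. Both facts are already recorded in that construction, via \(s_1(\beta)=s_2(\beta)=s_4(\beta)=0\) and Proposition~\ref{prop:odd-exceptional-decomposition}. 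With those checked, the classification follows.
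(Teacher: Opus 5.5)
Your proposal is correct and follows the paper's proof: the converse is Corollary~\ref{cor:equivalence-preserves}, and the forward direction splits on whether $e_\lambda=e_\mu$, invoking Theorem~\ref{thm:equal-even-weight-classification} or Theorem~\ref{thm:exceptional-case}. Your preliminary check that $|\lambda|=\deg P_\lambda=\deg P_\mu=|\mu|$ is a small but useful addition, since both case theorems assume partitions of a common $n$ and the paper's proof of the classification leaves this implicit.
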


\begin{proof}
Suppose first that
\[
P_\lambda(x)=P_\mu(x).
\]
If \(e_\lambda=e_\mu\), then \(\lambda\sim\mu\) by
Theorem~\ref{thm:equal-even-weight-classification}. Otherwise \(e_\lambda\neq e_\mu\), and
Theorem~\ref{thm:exceptional-case} gives \(\lambda\sim\mu\).
Hence
\[
P_\lambda(x)=P_\mu(x)
\quad\Longrightarrow\quad
\lambda\sim\mu.
\]

The converse was proved in Corollary \ref{cor:equivalence-preserves}.
This completes the proof.
\end{proof}

\section{Partition Statistics from Special Roots}
\label{sec:selected-roots}
In this final section, we illustrate how the factorization of the twisted Foulkes character polynomial immediately yields information about the underlying partition through the multiplicities of certain distinguished roots. These observations provide a simple geometric interpretation of the polynomial: the orders of vanishing at \(0\), at \(1\), and at roots of unity record natural combinatorial statistics of the partition.

\begin{proposition}
\label{prop:special-root-multiplicities}
Let
\[
\nu=(1^{s_1},2^{s_2},\ldots,n^{s_n})\vdash n.
\]
Then the following hold.
\begin{enumerate}[label=\rm(\roman*)]
\item
The order of vanishing of \(P_\nu(x)\) at \(x=0\) is
\[
\operatorname{ord}_{x=0} P_\nu(x)
=
\sum_{\substack{d\ge1\\ d\text{\rm\ odd}\\ s_d\text{\rm\ odd}}} d.
\]

\item
The order of vanishing of \(P_\nu(x)\) at \(x=1\) is
\[
\operatorname{ord}_{x=1} P_\nu(x)
=
\#\left\{
d\ge2:
d\text{\rm\ even},\;
s_d\text{\rm\ odd}
\right\}.
\]

\item
Let \(\zeta\) be a root of unity of order \(N\neq4\). Then
\[
\operatorname{ord}_{x=\zeta} P_\nu(x)
=
\#\left\{
d\ge2:
d\text{\rm\ even},\;
N\mid d,\;
s_d\text{\rm\ odd}
\right\}.
\]
\end{enumerate}
\end{proposition}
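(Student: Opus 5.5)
The plan is to use that the order of vanishing at a point is additive over the factorization
\[
P_\nu(x)=\prod_{d\text{ even}}F_{d,s_d}(x)\prod_{d\text{ odd}}G_{d,s_d}(x).
\]
So it suffices to compute, for each factor separately, its order of vanishing at \(0\), at \(1\), and at a root of unity \(\zeta\) of order \(N\neq4\), and then sum. The basic input throughout is Theorem~\ref{thm:imaginary-rescaling} together with Corollary~\ref{cor:generalized-hermite-roots}. Every root of \(\He_m^{[-d]}(y)\) has the form \(i\sqrt d\,\rho\) with \(\rho\) a simple real root of \(\He_m\). Moreover, \(y=0\) is a root exactly when \(m\) is odd, and in that case it is simple, since \(\He_m(0)=0\) iff \(m\) is odd and all roots of \(\He_m\) are simple.

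For (i), we first note that \(F_{d,m}(0)=\He_m^{[-d]}(1)\neq0\), because \(1\) is not purely imaginary; so the even factors contribute nothing. For odd \(d\), we write \(\He_m^{[-d]}(y)=y^{\varepsilon(m)}h(y)\) with \(h(0)\neq0\). Then \(G_{d,m}(x)=x^{d\varepsilon(m)}h(x^d)\), which has order exactly \(d\) at \(x=0\) when \(m\) is odd and order \(0\) otherwise. Summing over \(d\) gives (i).

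For (ii) and (iii), we treat a point \(\zeta\) on the unit circle, where \(\zeta=1\) is the case \(N=1\); so (ii) is the special case \(N=1\) of (iii), since every \(d\) is divisible by \(1\). For even \(d\), Lemma~\ref{lem:unit-circle-roots-even-factor} already says that \(\zeta\) is a root of \(F_{d,m}\) iff \(m\) is odd and \(\zeta^d=1\), i.e.\ \(N\mid d\), and that such roots are simple. Hence each even part size \(d\) with \(N\mid d\) and \(s_d\) odd contributes exactly \(1\).

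It then remains to show that odd factors \(G_{d,m}\) never vanish at a root of unity of order \(N\neq4\); this is the only step needing a small arithmetic argument, and we expect it to be the main point. By Proposition~\ref{prop:odd-root-geometry}, a root \(\alpha\) of \(G_{d,m}\) satisfies \(\alpha^d=i\sqrt d\,\rho\) with \(\rho\) a real root of \(\He_m\). If \(|\alpha|=1\), then \(\sqrt d\,|\rho|=1\), so \(\rho^2=1/d\). Since \(\rho\) is an algebraic integer (being a root of the monic integral polynomial \(\He_m\)), \(\rho^2\) is a rational algebraic integer, hence an integer. This forces \(d=1\) and \(\rho=\pm1\), and therefore \(\alpha=i\rho=\pm i\), which has order \(4\), the same mechanism as Case~1 of Theorem~\ref{thm:cross-factor-rigidity}. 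Since \(N\neq4\), no odd factor vanishes at \(\zeta\), and summing the even contributions gives (iii), and with it (ii).
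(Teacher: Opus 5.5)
Your proposal is correct and follows essentially the same route as the paper. Both use additivity of the vanishing order over the factors. For (i), both use $F_{d,m}(0)=\He^{[-d]}_m(1)\neq0$ and the simple root of $\He^{[-d]}_m$ at $0$. On the unit circle, both handle the even factors with Lemma~\ref{lem:unit-circle-roots-even-factor} and the odd factors with the algebraic-integer argument $\rho^2=1/d\Rightarrow d=1,\ \zeta=\pm i$. The only difference is cosmetic: you obtain (ii) as the case $N=1$ of (iii), whereas the paper checks $G_{d,s_d}(1)=\He^{[-d]}_{s_d}(1)\neq0$ directly.
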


\begin{proof}
Recall that
\[
P_\nu(x)
=
\prod_{\substack{d\ge1\\ d\text{\rm\ even}}}
F_{d,s_d}(x)
\prod_{\substack{d\ge1\\ d\text{\rm\ odd}}}
G_{d,s_d}(x),
\]
where
\[
F_{d,m}(x)=\He^{[-d]}_m(1-x^d),
\qquad
G_{d,m}(x)=\He^{[-d]}_m(x^d).
\]

For~\rm(i), consider first an odd integer \(d\). Since
\[
G_{d,s_d}(x)
=
\He^{[-d]}_{s_d}(x^d),
\]
the factor \(G_{d,s_d}\) vanishes at \(x=0\) if and only if
\(\He^{[-d]}_{s_d}(0)=0\), which occurs precisely when \(s_d\) is odd.
In that case \(0\) is a simple root of
\(\He^{[-d]}_{s_d}\), and the substitution \(y=x^d\) gives a zero of
multiplicity \(d\) at \(x=0\). On the other hand, for even \(d\),
\[
F_{d,s_d}(0)=\He^{[-d]}_{s_d}(1)\neq0,
\]
since all roots of \(\He^{[-d]}_{s_d}\) are purely imaginary.
Summing the contributions of the odd factors gives
\[
\operatorname{ord}_{x=0} P_\nu(x)
=
\sum_{\substack{d\ge1\\ d\text{\rm\ odd}\\ s_d\text{\rm\ odd}}} d.
\]

For~\rm(ii), if \(d\) is even, then
\[
F_{d,s_d}(1)=\He^{[-d]}_{s_d}(0),
\]
so \(F_{d,s_d}(1)=0\) if and only if \(s_d\) is odd. By Lemma~\ref{lem:unit-circle-roots-even-factor},
every such zero at \(x=1\) is simple. If \(d\) is odd, then
\[
G_{d,s_d}(1)=\He^{[-d]}_{s_d}(1)\neq0.
\]
Therefore the multiplicity of \(x=1\) as a root of \(P_\nu\) is exactly
the number of even part sizes occurring with odd multiplicity, namely
\[
\operatorname{ord}_{x=1} P_\nu(x)
=
\#\left\{
d\ge2:
d\text{\rm\ even},\;
s_d\text{\rm\ odd}
\right\}.
\]

For~\rm(iii), let \(\zeta\) have order \(N\neq4\). By Lemma~\ref{lem:unit-circle-roots-even-factor}, for an
even integer \(d\),
\[
F_{d,s_d}(\zeta)=0
\]
if and only if \(s_d\) is odd and
\[
\zeta^d=1.
\]
Since \(\zeta\) has order \(N\), this is equivalent to
\[
N\mid d.
\]
Moreover, every such zero is simple.

It remains to show that none of the odd Hermite factors vanishes at \(\zeta\). Suppose
\[
G_{d,s_d}(\zeta)=0
\]
for some odd \(d\). Then
\[
\He^{[-d]}_{s_d}(\zeta^d)=0,
\]
so by Theorem~\ref{thm:imaginary-rescaling} there is a real root \(\rho\) of the ordinary Hermite
polynomial \(\He_{s_d}\) such that
\[
\zeta^d=i\sqrt d\,\rho.
\]
Taking absolute values yields
\[
d\rho^2=1.
\]
Since \(\rho\) is an algebraic integer, so is \(\rho^2=1/d\). As
\(1/d\in\mathbb Q\), this forces \(1/d\in\mathbb Z\), and hence \(d=1\).
Thus \(\rho=\pm1\), so
\[
\zeta=\pm i,
\]
which has order \(4\), contrary to \(N\neq4\).

Hence only the even factors contribute zeros at \(\zeta\), and each
contributes with multiplicity one precisely when \(N\mid d\) and
\(s_d\) is odd. Therefore
\[
\operatorname{ord}_{x=\zeta} P_\nu(x)
=
\#\left\{
d\ge2:
d\text{\rm\ even},\;
N\mid d,\;
s_d\text{\rm\ odd}
\right\}.\qedhere
\]
\end{proof}

These formulas complement the classification theorem by showing that the twisted Foulkes character polynomial records natural combinatorial data of the underlying partition directly through the multiplicities
of distinguished roots. Thus the partition information encoded by $P_\nu(x)$ is visible not only through its global factorization, but also through its local behavior at special points of the complex plane.

\bibliographystyle{plain}
\bibliography{bibliography}

\end{document}